\documentclass{article}
\usepackage{amsmath,amsthm,amsfonts,amssymb}
\usepackage{fullpage}
\usepackage{subfig}
\usepackage{blkarray}
\usepackage{caption}
\usepackage{float}
\usepackage{algpseudocode}
\usepackage{tikz}

\usepackage{hyperref}
\numberwithin{equation}{section}
\usepackage{xcolor}
\usepackage{colortbl}
\usepackage[normalem]{ulem}
\usepackage{sectsty}
\usepackage{mathdots}
\definecolor{astral}{RGB}{46,116,181}
\subsectionfont{\color{astral}}
\sectionfont{\color{astral}}
\usepackage{colortbl}
\usepackage{tikz}
\DeclareMathAlphabet{\mathpzc}{OT1}{pzc}{m}{it}
\DeclareFontFamily{OT1}{pzc}{}
\DeclareFontShape{OT1}{pzc}{m}{it}{<-> s * [0.900] pzcmi7t}{}
\DeclareMathAlphabet{\mathpzc}{OT1}{pzc}{m}{it}
\usepackage{longtable}
\usepackage{amsbsy}
\usepackage{amsmath}
\usepackage{accents}
\newlength{\dhatheight}

\newenvironment{frcseries}{\fontfamily{frc}\selectfont}{}

\newenvironment{key}
{\par\textbf{Keywords:\,}\hangindent=2.5cm\hangafter=1}
{\par}

\newenvironment{AMS}
{\par\textbf{Mathematics Subject Classification: }\hangindent=2.2cm\hangafter=1}
{\par}

\usepackage[a4paper,
  top=25mm,
  bottom=25mm,
  left=20mm,
  right=20mm
]{geometry}

\DeclareMathAlphabet\mathbfcal{OMS}{cmsy}{b}{n}
\definecolor{darkslategray}{rgb}{0.18, 0.31, 0.31}
\definecolor{warmblack}{rgb}{0.0, 0.26, 0.26}

\usepackage{multirow}
\usepackage{arydshln}
\usepackage{mathtools}
\usepackage{algorithm}
\usepackage{algorithmicx}
\makeatletter
\def\BState{\State\hskip-\ALG@thistlm}
\makeatother

\newtheorem{theorem}{Theorem}[section]
\newtheorem{lemma}[theorem]{Lemma}

\theoremstyle{definition}
\usepackage{hyperref}
\newtheorem{definition}{Definition}[section]
\newtheorem{remark}{Remark}[section]
\newtheorem{example}{Example}[section]

\title{Solving Coupled Tensor Equation $\mathcal{A} \ltimes \mathcal{X} =\mathcal{B}, \ \mathcal{X}\ltimes \mathcal{C}=\mathcal{D}$ using Semi-Tensor Products in the t-product}
\author{\small  Mansi Wankhede \thanks{Department of Mathematics, NIT Raipur,
Raipur-492010, India({\tt mansiwankhede0714@gmail.com}). }, Ranjan Kumar Das \thanks{Department of Mathematics, NIT~Raipur,
Raipur-492010, India,({\tt rkdas.maths@nitrr.ac.in}, ). } 
\date{}}
\begin{document}
\maketitle
\begin{abstract}
This paper investigates the solution of coupled third-order tensor equation $\mathcal{A} \ltimes \mathcal{X} = \mathcal{B},\ \mathcal{X} \ltimes \mathcal{C} = \mathcal{D},$ of arbitrary dimensions by incorporating semi-tensor product (STP) within t-product framework, where the unknown $\mathcal{X}$ can take  form of  vector, matrix, or tensor. For the unknown $\mathcal{X}$, we establish a necessary and sufficient condition that provides an equivalence criterion for the existence of solutions. Moreover, the explicit structure (Toeplitz, Circulant) of $\mathcal{C}$ and $\mathcal{D}$ is characterized. Theoretical results are supported by several illustrative examples.
\end{abstract}

\begin{key}
    Coupled Tensor Equation, Semi Tensor Product, t-product, Compatibility Conditions, Toeplitz and Circulant Tensor.
\end{key}

\begin{AMS}
15A69, 15A60.
\end{AMS}

\section{Introduction}
Tensors have emerged as an essential mathematical tool for representing and analyzing multiway data in modern science and engineering \cite{ A.Be2025, A.BE2024,J.Coopertensorapplication,T.Kolda2009Tensordecomposition,D.mishra2017,D.Mishra2022,D.mishra2025}. Unlike vectors and matrices, tensors naturally capture higher-order interactions and multidimensional structures, making them particularly suitable. This paper addresses the solution of coupled third-order tensor equations of arbitrary dimensions by incorporating semi-tensor product (STP) within t-product framework.or applications such as signal and image processing, video analysis, data mining, control systems, networked systems, and machine learning \cite{D.S.~Burdicktensorapplication,H.Jintensorapplication,Y.Jitensorapplication,D.mishra2020}.
 \par The algebraic manipulation of tensors depends critically on the choice of tensor products. Over the past decades, a various tensor products have been proposed. Among them the Kronecker product \cite{K.Batselier2017Kroneckerproductoftensor}, k-mode product \cite{L.Qi2017Spectraltheory}, Tucker product \cite{T.Kolda2009Tensordecomposition}, and t-product(t-P) \cite{M.Kilmer2011factorizationfortensor}. Each product induces a different algebraic structure and is suitable for specific applications. Among them, the t-P has attracted significant attention due to its ability to extend many matrix concepts, such as matrix multiplication, transpose, inverse, and singular value decomposition, to third-order tensors. Nevertheless, most tensor products, including the standard t-product, impose strict dimensional compatibility requirements, which substantially restrict their applicability in practical problems involving heterogeneous tensor dimensions.
\par To address dimensional incompatibility, Cheng et al. \cite{D.cheng2012STPofmatrices} introduced the semi-tensor product(STP) as an extension of the conventional matrix multiplication. The STP removes the requirement of equal inner dimensions by embedding matrices into higher-dimensional spaces via the Kronecker product. As a result, the STP coincides with the standard matrix product when dimensions are compatible, while remaining well defined for arbitrary dimensions. Due to its flexibility and its algebraic properties, the STP has become an important category of generalized matrix operations and has been successfully applied to the analysis of logical networks, Boolean control systems, evolutionary games, cryptographic algorithms, and networked dynamical systems \cite{W.Liu2022STPoftensor}.
\par Matrix equations of the form
\textit{$AX=B$}
play a fundamental role in numerical analysis, control theory, and systems see\cite{J.Yao2016solutionAX=B,J.2017STPapplication} and the references there in. Within the STP framework, this classical equation has been extended to cases with incompatible dimensions and further generalized to tensor equations. By employing the STP, its solvability conditions and solution methods for both matrix and tensor versions of the equation $A \ltimes X = B$, where the unknown variable may be a vector, matrix, or tensor \cite{J.FathitensorequationAX=BunderSTP}. These developments significantly broaden the applicability of linear equation theory to multilinear and high-dimensional systems \cite{J.FathitensorequationAX=BunderSTP}.
Beyond single-sided equations, coupled matrix equations of the form
$AX=B, \ XC = D$
have also been studied under the STP framework. In particular, Li et al. \cite{J.Li2017AX=BCX=DunderSTP} investigated the solvability of such coupled equations for matrices and derived necessary and sufficient conditions together with explicit solution methods. These results demonstrate that the STP provides a powerful tool for analyzing constrained linear systems involving multiple interactions. However, the in \cite{J.Li2017AX=BCX=DunderSTP} studies are primarily confined to the matrix setting and do not fully exploit the potential of tensor representations.

 Motivated by the work  
 \cite{J.Li2017AX=BCX=DunderSTP}, the present work aims to extend the coupled equation $AX = B, \, XC = D$ from matrices to third-order tensors by incorporating the STP within the t-P framework. Specifically, we investigate the solvability of the coupled tensor equation
\begin{equation}\tag{1}\label{1}
\mathcal{A} \ltimes \mathcal{X} = \mathcal{B}, \ \mathcal{X} \ltimes \mathcal{C} = \mathcal{D},
\end{equation}
where $\mathcal{A},\ \mathcal{B},\ \mathcal{C},\ \mathcal{D}$ are given third-order tensors and $\mathcal{X}$ is an unknown vector, matrix or tensor. By combining the dimensional flexibility of the STP with the algebraic structure of the t-P, we develop a unified and systematic framework for analyzing solvability conditions and constructing solutions for coupled tensor equation with arbitrary dimensions. The coupled tensor equation \eqref{1} studied in this paper have important applications in several areas. In networked noncooperative games, such equations naturally arise in the characterization of equilibrium conditions and strategy updates involving multiway interactions among agents.
\par The structure of the paper is as follows. Section~2 presents the necessary preliminaries on the t-product and the STP for third-order tensors. Section~3 investigates the solvability of the coupled tensor equation $\mathcal{A} \ltimes \mathcal{X} = \mathcal{B}$ and $\mathcal{X} \ltimes \mathcal{C} = \mathcal{D}$, deriving necessary and sufficient conditions together with explicit solution methods. Based on this framework, we analyze the capability of the tensor–matrix equation and tensor-tensor equation using STP under t-product in Section 4 and Section 5. Finally, concluding remarks are presented in the last section.
\section{Preliminaries}
In this section, we introduce the notation and basic definitions required in the subsequent analysis. $\mathbb{R}$(resp.$\mathbb{C}$) denote the sets of real numbers(resp. complex numbers), and $\mathbb{R}^n$(resp.$\mathbb{C}^n$) consist of real column vectors (resp. complex column vectors) of dimension $n$, while $\mathbb{R}^{m \times n}$  (resp.$\mathbb{C}^{m \times n}$) represent the sets of real matrices(resp. complex matrices) of size $m \times n$. Matrices are denoted by uppercase letters, vectors by italicized uppercase letters, and scalars by lowercase letters. For a matrix $A$, the entry in the $i^{th}$ row and $j^{th}$ column is denoted by $a_{ij}$, and its $i^{th}$ column is written as $A_i$. For positive integers $m$ and $n$, their least common multiple and greatest common divisor are denoted by $[m,n]$ and $(m,n)$, respectively.\\
A tensor is a multidimensional array whose order is defined as the number of indices required to specify one of its elements. From this perspective, scalars, vectors, and matrices can be referred as tensors of order zero, one, and two, respectively. An $N$-th order tensor $\mathcal{A} \in \mathbb{R}^{n_1 \times n_2 \times \cdots \times n_N}$ consists of entries denoted by $a_{i_1 i_2 \cdots i_N}$, where $1 \leq i_j \leq n_j$ for $j = 1, \ldots, N$. Each element corresponds uniquely to the index tuple $(i_1, i_2, \ldots, i_N)$.\\
For a tensor $\mathcal{A} \in \mathbb{R}^{n_1 \times n_2 \times \cdots \times n_N}$, fixing the last index yields a subtensor obtained by slicing $\mathcal{A}$ along its final mode. $\mathcal{A}_{(: \, : \, \cdots \, : \, k)}, \ k = 1,2,\ldots,n_N,$
which lies in $\mathbb{R}^{n_1 \times n_2 \times \cdots \times n_{(N-1)}}$ and is referred to as a \emph{frontal slice}. More generally, \emph{fibers} are obtained by fixing all but except one index of the tensor, serving as the higher-order generalization of matrix rows and columns.\cite{J.FathitensorequationAX=BunderSTP}
\begin{figure}[htbp]
    \centering
    \includegraphics[width=0.4\textwidth]{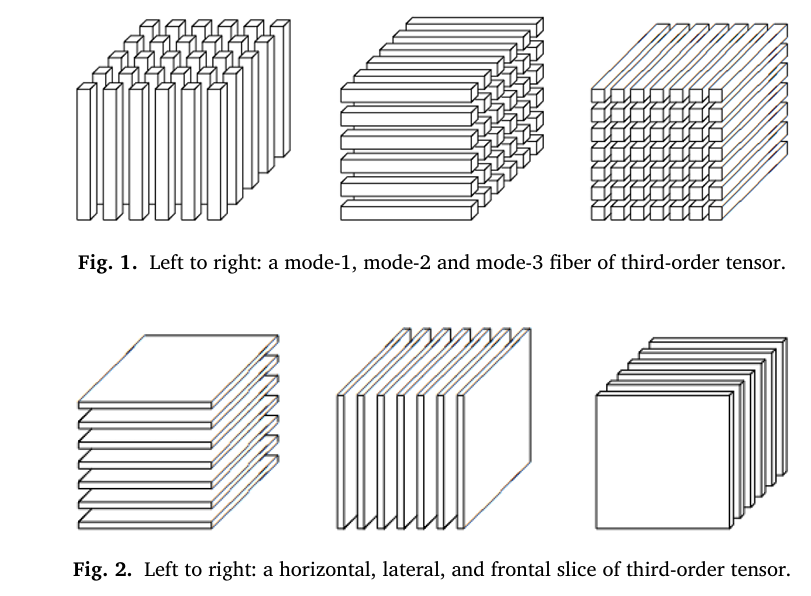}
    \label{fig:example}
\end{figure}\\
In this paper, we restrict our attention to third-order tensors, which are denoted by calligraphic letters. A typical third-order tensor is written as
$\mathcal{A} = (a_{ijk}),$
where $1 \leq i \leq n_1$, $1 \leq j \leq n_2$, and $1 \leq k \leq n_3$. The notations $\mathcal{A}_{(i:k)}$, $\mathcal{A}_{(:jk)}$, and $\mathcal{A}_{(ij:)}$ represent fibers along the first, second, and third modes, respectively (see Fig.~1). Similarly, $\mathcal{A}_{(i::)}$, $\mathcal{A}_{(::k)}$ and $\mathcal{A}_{(:j:)}$ denote the horizontal, frontal and lateral slices of $\mathcal{A}$, corresponding to fixed indices in the first, second, and third modes, respectively (see Fig.~2). Each slice is a matrix. Based on this representation, a tensor multiplication framework is developed by interpreting a third-order tensor as an ordered collection of its frontal slices. For a tensor $\mathcal{A} \in \mathbb{R}^{m \times n \times r}$, it consists of $r$ frontal slices of size $m \times n$, which are denoted by
$
\mathcal{A}_{(::1)}, \mathcal{A}_{(::2)}, \ldots, \mathcal{A}_{(::r)}.
$
\begin{definition}(Identity Tensor
\cite{Z.Chen2023SVDlikedecomposition}) An $n \times n \times k$ tensor $\mathcal{I}{n\times n\times k}$ is referred to as an identity tensor if its first frontal slice equals to $n \times n$ identity matrix, while all remaining frontal slices consist entirely of zeros. In the particular case $k=1$, the tensor $\mathcal{I}{n\times n\times 1}$ reduces to an $n \times n \times 1$ identity tensor whose single frontal slice is the $n \times n$ identity matrix, for convenience, this tensor is denoted by $\mathcal{I}_{n\times n}$.
\end{definition}
\begin{definition}(t-Product
\cite{M.Kilmer2008SVDtensor,M.Kilmer2011factorizationfortensor})
Let $\mathcal{A}\in\mathbb{C}^{m\times n\times r}$ and $\mathcal{B}\in\mathbb{C}^{n\times s\times r}$ be third-order tensors. Their t-product(t-P), denoted by $\mathcal{A} * \mathcal{B}$, yields a tensor of size $m\times s\times r$ and is defined by
$$
\mathcal{A} * \mathcal{B}
=
\mathrm{fold}\!\left(
\mathrm{bcirc}(\mathcal{A}).\,
\mathrm{unfold}(\mathcal{B})
\right).
$$
Here, the operator $\mathrm{unfold}(\cdot)$ stacks the frontal slices of a tensor vertically and  $\mathrm{bcirc}(\mathcal{A})$ denotes the block circulant matrix constructed from the frontal slices of $\mathcal{A}$, given by $$
\mathrm{unfold}(\mathcal{A}) =
\begin{bmatrix}
\mathcal{A}_{(::1)} \\
\mathcal{A}_{(::2)} \\
\vdots \\
\mathcal{A}_{(::r)}
\end{bmatrix} 
\in \mathbb{C}^{mr \times n},\qquad
\mathrm{bcirc}(\mathcal{A}) =
\begin{bmatrix}
\mathcal{A}_{(::1)}   & \mathcal{A}_{(::r)}   & \cdots & \mathcal{A}_{(::2)} \\
\mathcal{A}_{(::2)}   & \mathcal{A}_{(::1)}   & \cdots & \mathcal{A}_{(::3)} \\
\vdots              & \vdots              & \ddots & \vdots            \\
\mathcal{A}_{(::r)}   & \mathcal{A}_{(::r-1)} & \cdots & \mathcal{A}_{(::1)}
\end{bmatrix}.$$
The operator $\operatorname{fold}$ is defined as the inverse of $\operatorname{unfold}$ mapping. The symbol $`*`$ is used to denote the product operation between third-order tensors, referred to as the $t$-P, while $`\cdot`$ represents the standard matrix product. The $t$-P plays a fundamental role in third-order tensor analysis, as it extends classical matrix multiplication to the tensor setting and introduces an algebraic structure with properties analogous to those of matrix operations.
\end{definition}
\begin{definition}(Kronecker Product
\cite{K.Batselier2017Kroneckerproductoftensor,W.Liu2022STPoftensor})Let $\mathcal{A} = (a_{i_1 i_2 \cdots i_m}) \in \mathbb{R}^{n_1 \times n_2 \times \cdots \times n_m}$
 \text{and}
$\mathcal{B} = (b_{j_1 j_2 \cdots j_m}) \in \mathbb{R}^{p_1 \times p_2 \times \cdots \times p_m},$ be two tensors of order $m$. The Kronecker product of $\mathcal{A}$ and $\mathcal{B}$ denoted by $\mathcal{A}\otimes\mathcal{B}$, is an $m$-order tensor of size
$n_1p_1 \times n_2p_2 \times \cdots \times n_mp_m$, defined by scaling the tensor $\mathcal{B}$ with each entry of $\mathcal{A}$, that is,
\[
\mathcal{A} \otimes \mathcal{B}
:= (a_{i_1 i_2 \cdots i_m}\mathcal{B})
\in \mathbb{R}^{n_1 p_1 \times n_2 p_2 \times \cdots \times n_m p_m},
\]
Specifically, each element of $\mathcal{A}\otimes\mathcal{B}$ is given by
\[
(\mathcal{A} \otimes \mathcal{B})_{[i_1 j_1]\,[i_2 j_2]\cdots[i_m j_m]}
=
a_{i_1 i_2 \cdots i_m}\,
b_{j_1 j_2 \cdots j_m}.
\]
where the combined index $[i_kj_k]$ corresponds to the single index $(i_k-1)p_k + j_k$, with $p_k$ denoting the size associated with the index $j_k$ for $k=1,2,\ldots,m$.
\end{definition}
\begin{definition}(Semi-Tensor Product(STP) \cite{J.FathitensorequationAX=BunderSTP})Let $\mathcal{A}=(a_{i_1 i_2 i_3}) \in \mathbb{C}^{m\times n\times r}$ and
$\mathcal{B}=(b_{j_1 j_2 j_3}) \in \mathbb{C}^{p\times q\times s}$ be two
third-order tensors. The STP of
$\mathcal{A}$ and $\mathcal{B}$, denoted by $\mathcal{A}\ltimes\mathcal{B}$,
is defined as
\[
\mathcal{A}\ltimes\mathcal{B}
=
\left(
\mathcal{A}\otimes I_{\frac{t}{n}\times\frac{t}{n}\times\frac{d}{r}}
\right)
*
\left(
\mathcal{B}\otimes I_{\frac{t}{p}\times\frac{t}{p}\times\frac{d}{s}}
\right)
\in \mathbb{C}^{\frac{mt}{n}\times\frac{qt}{p}\times d},
\]
where $t=[n,p]$ and $d=[r,s]$.
In the particular case $n=p$ and $r=s$, the above definition reduces to the standard $t$-P. Moreover, the STP preserves the main algebraic properties of the $t$-P. Compared with the $t$-product, the STP allows the multiplication of tensors whose dimensions are not compatible. Finally, when restricted to second-order tensors, this definition coincides with the classical STP of matrices.
\end{definition}
\section{Solvability conditions for tensor equations with vector Solutions}
This section is devoted in solving the tensor-vector equation for unknown vector $X$ given as:
\begin{equation}\label{eq:3.1}
\left\{
\begin{aligned}
& \underbrace{\mathcal{A} \ltimes X}_{\frac{m t_1}{n} \times \frac{t_1}{p} \times d_1} = \mathcal{B} \\
& \underbrace{X \ltimes \mathcal{C}}_{pt_{2} \times \frac{bt{2}}{a}\times d_2} = \mathcal{D},
\end{aligned}
\right.
\end{equation}\\
here, $\mathcal{A}\in \mathbb{C}^{m \times n \times r}$, $\mathcal{B}\in \mathbb{C}^{h \times k \times r}$, $\mathcal{C}\in \mathbb{C}^{a \times b \times r}$, and $\mathcal{D}\in \mathbb{C}^{l \times d \times r}$ are given tensors, while $X \in \mathbb{C}^{p}$ denotes the unknown vector. Define $t_{1} =[n,p]$, $t_{2} = [1, a]$, and $d_{1} = d_{2} = [1, r]$. The analysis is initiated for the special configuration $m=h$ with $l=p$, after which the results are extended to the more general case.
\subsection{The simplified case with \texorpdfstring{$m=h$}{m=h}}
In this section, we investigate the existence of the solution  of the tensor–vector Eq\eqref{eq:3.1}. The two simplified cases, $m=h$ with $l=p$ and $m=h$ with $l\neq p$ lead to similar solvability conditions for Eq\eqref{eq:3.1}. Therefore, it suffices to focus on the case $m=h$. The corresponding results can be readily derived using the STP.
\begin{lemma}\label{lemma:3.1}
Assume that Eq\eqref{eq:3.1} admits a solution $X$. Then, the dimensions of the tensors $\mathcal{A}, \mathcal{B}, \mathcal{C}$, and $\mathcal{D}$ necessarily satisfies the following conditions:\vspace{-0.5em}
\begin{enumerate}
    \item[(i)] The ratios $\frac{n}{k}$ and $\frac{l}{a}$ are positive integers; (ii) $b = d$ and $p = \frac{l}{a} = \frac{n}{k}$.
\end{enumerate}
\end{lemma}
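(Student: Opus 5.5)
Since the lemma is purely dimensional, the plan is to compare the sizes of both sides of each equation in Eq~\eqref{eq:3.1}, using the size formula for the STP in the definition above. The unknown $X\in\mathbb{C}^{p}$ is viewed as a third-order tensor of size $p\times 1\times 1$, so its column dimension is $1$ and its tube dimension is $1$. Throughout we use the standing assumption $m=h$ of this subsection.

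\textbf{First equation.} With $\mathcal{A}\in\mathbb{C}^{m\times n\times r}$, $t_1=[n,p]$ and $d_1=[r,1]=r$, the definition gives $\mathcal{A}\ltimes X\in\mathbb{C}^{\frac{mt_1}{n}\times\frac{t_1}{p}\times r}$. Matching this with $\mathcal{B}\in\mathbb{C}^{h\times k\times r}$ yields two conditions.
\begin{itemize}
\item The first modes give $\frac{mt_1}{n}=h=m$, hence $t_1=n$. This means $p\mid n$.
\item The second modes give $\frac{t_1}{p}=k$, i.e.\ $\frac{n}{p}=k$.
\end{itemize}
The statement as printed says $p=\frac{n}{k}$, and $\frac{n}{k}=p$ is a positive integer. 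This follows from $n=pk$ with $p,k\ge 1$ integers.

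\textbf{Second equation.} Here $X\in\mathbb{C}^{p\times1\times1}$ and $\mathcal{C}\in\mathbb{C}^{a\times b\times r}$, so $t_2=[1,a]=a$ and $d_2=[1,r]=r$. The product is
\[
X\ltimes\mathcal{C}=\bigl(X\otimes I_{a\times a\times r}\bigr)*\bigl(\mathcal{C}\otimes I_{1\times1\times1}\bigr)\in\mathbb{C}^{pa\times b\times r}.
\]
Matching with $\mathcal{D}\in\mathbb{C}^{l\times d\times r}$ gives $pa=l$ and $b=d$. Hence $\frac{l}{a}=p$ is a positive integer.

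\textbf{Conclusion and the main obstacle.} Combining the two equations gives (i) and (ii): $b=d$ and $p=\frac{l}{a}=\frac{n}{k}$.

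The main obstacle is bookkeeping rather than mathematics. The underbrace in Eq~\eqref{eq:3.1} writes the second product as $pt_2\times\frac{bt_2}{a}\times d_2$, which gives column size $b$ only because $t_2=a$. I will state this simplification explicitly.

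I will also be careful with the first-mode identity. It forces $[n,p]=n$ only under the assumption $m=h$, so I will flag that this is where $m=h$ is used. Finally, I will note that the tube dimensions agree automatically, since $d_1=d_2=r$ matches the third dimension of both $\mathcal{B}$ and $\mathcal{D}$, so they impose no further condition.
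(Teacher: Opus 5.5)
Your proposal is correct and follows the paper's own argument: you match the STP size formula against $\mathcal{B}\in\mathbb{C}^{h\times k\times r}$ and $\mathcal{D}\in\mathbb{C}^{l\times d\times r}$, use $m=h$ to force $t_1=[n,p]=n$ and hence $p=\frac{n}{k}$, and use $t_2=[1,a]=a$ to obtain $l=pa$ and $d=b$. Your value $t_2=a$ is the right one; the paper's ``$t_2=n$'' at that step is a typo.
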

\begin{proof}Let $X\in \mathbb{C}^p$ satisfy Eq\eqref{eq:3.1},
\begin{equation}
    \underbrace{\mathcal{A} \ltimes X}_{\frac{m t_1}{n} \times \frac{t_1}{p} \times d_1} = \mathcal{B},
\end{equation}\label{eq:3.2}
$$\frac{mt_{1}}{n}=h, \frac{t_{1}}{p}=k,$$\vspace{-0.5em}
here $ t_{1}=[n, p] \text{ and }d_{1}=[r,1]=r .$
Thus $t_{1}=n$, and $p=\frac{n}{k}$ is a positive integer. Then
\begin{equation}\label{eq:3.3}
\underbrace{X \ltimes \mathcal{C}}_{{p t_2} \times \frac{bt_2}{a} \times d_2} = \mathcal{D},
\end{equation}
$$
pt_{2}=l, \frac{bt_{2}}{a}=d,$$ 
$ \text{ where } t_{2}=[1, a] \ \text{ and } \ d_{2}=[r,1]=r.$\\
Hence, we have $t_{2} = n$ and  $\frac{l}{a}$ is a positive integer, $p = \frac{l}{a} = \frac{n}{k}$, and $b = d$. Consequently, Eq\eqref{eq:3.1} admits a solution only when these two conditions are satisfied, which concludes the proof.
\end{proof}
At this section, we investigate the sufficient and necessary conditions for the solvability of Eq\eqref{eq:3.1}. Decompose tensor $\mathcal{A}\text{ as }[\widetilde{\mathcal{A}}_{1}\ \widetilde{\mathcal{A}}_{2}\ \cdots \ \widetilde{\mathcal{A}}_{p}],$ where each sub-tensor $\mathcal{\widetilde{A}}_{g}\in\mathbb{C}^{h \times k\times r}$  and using the STP, the equation $\mathcal{A} \ltimes X = \mathcal{B}$ can be equivalently reformulated as\vspace{-0.7em}
\begin{equation}
\mathcal{A}\ltimes X=[\widetilde{\mathcal{A}}_{1}\ \widetilde{\mathcal{A}}_{2}\ \cdots \ \widetilde{\mathcal{A}}_{p}]\ltimes
\left[\begin{array}{cccc}
    x_{1}& x_{2}&\cdots&x_{p}
\end{array}\right]^T=\sum_{g=1}^px_{g}\widetilde{\mathcal{A}}_{g}=\mathcal{B}\in \mathbb{C}^{h \times k \times r},
\end{equation}\vspace{-0.7em}
and Eq\eqref{eq:3.3} could be described as 
\begin{equation}\label{eq:3.5}
X\ltimes \mathcal{C}=(X\otimes I_{a\times a\times r})*\mathcal{C}=
\begin{bmatrix}
    x_{1}\mathcal{C}&x_{2}\mathcal{C}&\cdots&x_{n}\mathcal{C}
      \end{bmatrix}^T=\begin{bmatrix}
          \mathcal{D}_{1}&\mathcal{D}_{2}&\cdots& \mathcal{D}_{p}\\
      \end{bmatrix}^T=  \mathcal{D},
      \end{equation}
      Here, sub-tensor of $\mathcal{D}$ is denoted by $\mathcal{D}_{g} \in \mathbb{C}^{a \times b \times r}$. By following the method outlined above, the subsequent results can be directly obtained.\\
\vspace{-1em}    
\begin{lemma}\label{lemma:3.2}
Eq\eqref{eq:3.1} admits a solution if and only if the following two conditions hold:\vspace{-0.4em}
\begin{enumerate}
    \item[(i)] The tensors $\widetilde{A}_1, \widetilde{A}_2, \ldots, \widetilde{A}_p$ and $\mathcal{B}$ are linearly dependent;\vspace{-0.4em}
    \item[(ii)] The entries of $\mathcal{D}$ satisfy $d_{\,t + (g-1)a,\, j,\, i} = \alpha_g \, c_{t,j,i},$ where $c_{t,j,i} \in \mathcal{C}$, $\alpha_g \in \mathbb{C}$, $g = 1, \ldots, p$,\ $j = 1, \ldots, d$ and $i=1,\ldots,r$.
\end{enumerate}
\end{lemma}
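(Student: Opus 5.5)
The plan is to reduce the coupled system to the two block identities derived just before the statement. Both directions then become bookkeeping on these identities. Throughout, I work under the dimension conditions of Lemma~\ref{lemma:3.1}: $p=\frac{n}{k}=\frac{l}{a}$ and $b=d$. These are necessary by that lemma, so they may be imposed from the outset. The STP expansion gives two reformulations. The first equation $\mathcal{A}\ltimes X=\mathcal{B}$ is equivalent to $\sum_{g=1}^{p}x_g\widetilde{\mathcal{A}}_g=\mathcal{B}$. The second, $X\ltimes\mathcal{C}=\mathcal{D}$, is equivalent to the blockwise identities $\mathcal{D}_g=x_g\mathcal{C}$ for $g=1,\dots,p$, where $\mathcal{D}_g\in\mathbb{C}^{a\times b\times r}$ is the $g$-th block of $\mathcal{D}$ along the first mode. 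Writing the latter entrywise, the $(t,j,i)$ entry of $\mathcal{D}_g$ is $d_{t+(g-1)a,\,j,\,i}$. Hence the second equation reads $d_{t+(g-1)a,j,i}=x_g c_{t,j,i}$ for all $t\le a$, $j\le d$, $i\le r$.

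\emph{Necessity.} Suppose $X=(x_1,\dots,x_p)^T$ solves Eq\eqref{eq:3.1}. The first identity says that $\mathcal{B}$ is a linear combination of $\widetilde{\mathcal{A}}_1,\dots,\widetilde{\mathcal{A}}_p$. This gives the linear dependence in (i), read as ``$\mathcal{B}$ lies in the span of the $\widetilde{\mathcal{A}}_g$''. Setting $\alpha_g:=x_g$ in the second identity gives exactly the entrywise relation (ii).

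\emph{Sufficiency.} This is the delicate step, because the coefficients from (i) and the scalars $\alpha_g$ from (ii) must coincide. Condition (i) yields some coefficient vector $(x_g)$ with $\sum x_g\widetilde{\mathcal{A}}_g=\mathcal{B}$. Condition (ii) yields scalars $\alpha_g$. A single $X$ must satisfy both equations, so I would take $X=(\alpha_1,\dots,\alpha_p)^T$, which is forced by the second equation whenever $\mathcal{C}\neq 0. I would then verify the first equation for this choice. To make this rigorous, I would read (i) and (ii) jointly: the linear dependence in (i) must be realized by the same scalars $\alpha_g$, i.e.\ $\sum_g\alpha_g\widetilde{\mathcal{A}}_g=\mathcal{B}$. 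If $\mathcal{C}=0$, then $\mathcal{D}=0$ and the $\alpha_g$ are free, so (i) alone suffices. If $\mathcal{C}\neq0$, the $\alpha_g$ are uniquely determined by $\mathcal{D}$, and (i) must hold with these coefficients. With this reading, substituting $X=(\alpha_g)$ into the two reformulated identities gives $\mathcal{A}\ltimes X=\mathcal{B}$ and $X\ltimes\mathcal{C}=\mathcal{D}$ directly.

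The main obstacle is this compatibility between (i) and (ii). As literally stated, ``linearly dependent'' is weaker than what is needed. It permits a dependence with zero coefficient on $\mathcal{B}$, and it permits coefficients different from the $\alpha_g$. I would handle this by making the identification $x_g=\alpha_g$ explicit in the proof, splitting into the cases $\mathcal{C}=0$ and $\mathcal{C}\neq0$ as above. A secondary point is the index bookkeeping: one must check that the $t$-product $(X\otimes I_{a\times a\times r})*\mathcal{C}$ really stacks $x_g\mathcal{C}$ in the $g$-th block of rows of each frontal slice. This follows because $X\otimes I_{a\times a\times r}$ has a single nonzero frontal slice $X\otimes I_a$. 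Its block-circulant matrix is therefore block diagonal, and the product reduces slice by slice to $(X\otimes I_a)\mathcal{C}_{(::i)}$.
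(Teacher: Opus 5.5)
Your proposal is correct and takes essentially the paper's route: the paper also obtains the lemma directly from the two reformulations $\sum_{g=1}^{p}x_g\widetilde{\mathcal{A}}_g=\mathcal{B}$ and $x_g\mathcal{C}=\mathcal{D}_g$, with no further argument. Your explicit requirement that $\mathcal{B}$ lie in the span of the $\widetilde{\mathcal{A}}_g$ with the same coefficients $\alpha_g$ as in (ii), rather than mere linear dependence, is the reading the paper intends: it names $X=[\alpha_1,\ldots,\alpha_p]^T$ as the solution right after the lemma, and Theorem~\ref{th3.3} couples the same $x_g$ in both equations.
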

Here, the solution vector $X$ is given by $X = [\alpha_1, \alpha_2, \ldots, \alpha_p]^T$. Moreover, the solution is unique except in the case where $\mathcal{C}$ and $\mathcal{D}$ are zero and the tensors $\widetilde{A}_1, \widetilde{A}_2, \ldots, \widetilde{A}_p$ remain linearly dependent according to condition (i).
 \begin{definition}(Column Stacking \cite{D.cheng2012STPofmatrices})
Let $A=(a_{ij})\in \mathbb{C}^{m \times n}.$ The column stacking form of $A,$ denoted by $V_c(A),$ is defined as 
$$ V_c(A):= (a_{11},\ldots,a_{m1},a_{12},\ldots,a_{m2},\ldots,a_{1n},\ldots,a_{mn})^T.$$
\end{definition}
\begin{definition}(Lateral form)
Let $\mathcal{A}\in \mathbb{C}^{n_1\times n_2 \times n_3}.$ Then lateral form of $\mathcal{A},$ denoted by $\mathcal{V}_L(\mathcal{A}),$ is defined as
$$\mathcal{V}_L(\mathcal{A}):=[V_c(\mathcal{A}(:1:)), V_c(\mathcal{A}(:2:)), \ldots,V_c(\mathcal{A}(:n_2:)) ]^T.$$
\end{definition}
\begin{example}
    Let us consider $\mathcal{A}$ as follows:
$$\begin{tabular}{cc|cc}
 \hline
\multicolumn{2}{c}{${{\mathcal{A}}}(:,:,1)$} & 
\multicolumn{2}{c}{${\mathcal{A}}(:,:,2)$}  \\
\hline
 2&0&1&3 \\
 6&1&5&2\\
 \end{tabular}, $$ then the lateral form of $\mathcal{A}$ is $\mathcal{V}_L(\mathcal{A})=[2,6,1,5,0,1,3,2]^T.$ 
\end{example}
By  using Lemma~\ref{lemma:3.2}, we derive  the following result.
\begin{theorem}\label{th3.3}
 Eq\eqref{eq:3.1} is solvable if and only if the following tensor–vector equation, expressed in terms of the classical matrix product is solvable:
\begin{equation}
\left\{
\begin{aligned}
&x_1 \mathcal{V}_L(\widetilde{\mathcal{A}}_1) + x_2 \mathcal{V}_L(\widetilde{\mathcal{A}}_2) + \cdots + x_p \mathcal{V}_L(\widetilde{\mathcal{A}}_p)
= \mathcal{V}_L(\mathcal{B}) \in \mathbb{C}^{rhk}, \\
&x_g \mathcal{V}_L(\mathcal{C}) = \mathcal{V}_L(\mathcal{D}_g) \in \mathbb{C}^{rab },
\end{aligned}
\right.
\end{equation}\\
where $
\mathcal{V}_L(\widetilde{\mathcal{A}}_g) =
\begin{bmatrix}
\mathcal{A}_1 &\mathcal{A}_2 &\cdots &\mathcal{A}_k
\end{bmatrix}^T,
\quad
\mathcal{V}_L(\mathcal{D}_g) =
\begin{bmatrix}
\mathcal{D}_{g1}&\mathcal{D}_{g2}&\dots&\mathcal{D}_{gd}
\end{bmatrix}^T
$,\
 $\mathcal{A}_{g}$ denote the $g^{th}$ lateral slice of $\mathcal{A}$, and let $\mathcal{D}_{g} \in \mathbb{C}^{a \times b \times r}$ be a sub-tensor of $\mathcal{D}$. Further, $\mathcal{D}_{g j}$ represents the $j^{th}$ lateral slice of the sub-tensor $\mathcal{D}_{g}$, where $g = 1, \ldots, p$ and $j = 1, \ldots, d$.
\end{theorem}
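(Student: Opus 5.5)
The plan is to reduce both equations of Eq\eqref{eq:3.1} to their sub-tensor forms, as in the discussion preceding Lemma~\ref{lemma:3.2}, and then apply the vectorization $\mathcal{V}_L$ to each one. The only property of $\mathcal{V}_L$ needed is that it is a linear bijection from $\mathbb{C}^{n_1\times n_2\times n_3}$ onto $\mathbb{C}^{n_1n_2n_3}$. It is linear because it only rearranges entries. It is injective because every entry $a_{ijk}$ appears exactly once: the $j$th block is $V_c(\mathcal{A}(:j:))$, and within that block the entry sits at position $i+(k-1)n_1$. Consequently, for tensors of the same size, an identity $\sum_g x_g \mathcal{T}_g = \mathcal{S}$ holds if and only if $\sum_g x_g \mathcal{V}_L(\mathcal{T}_g) = \mathcal{V}_L(\mathcal{S})$. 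Identities between tensors and identities between their lateral forms are therefore interchangeable.

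For the first equation, Lemma~\ref{lemma:3.1} gives $t_1=n$, $d_1=r$ and $p=n/k$, so the identity factor attached to $X$ is trivial in the third mode. Writing $\mathcal{A}=[\widetilde{\mathcal{A}}_1\ \cdots\ \widetilde{\mathcal{A}}_p]$ with each $\widetilde{\mathcal{A}}_g\in\mathbb{C}^{h\times k\times r}$, the reformulation already recorded in the paper reads $\mathcal{A}\ltimes X=\sum_{g} x_g\widetilde{\mathcal{A}}_g$. Applying $\mathcal{V}_L$ and using linearity shows that $\mathcal{A}\ltimes X=\mathcal{B}$ holds if and only if $\sum_g x_g\mathcal{V}_L(\widetilde{\mathcal{A}}_g)=\mathcal{V}_L(\mathcal{B})$ in $\mathbb{C}^{rhk}$. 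The lateral slices of $\widetilde{\mathcal{A}}_g$ are lateral slices of $\mathcal{A}$, namely those with indices $(g-1)k+1,\ldots,gk$. This is the block description of $\mathcal{V}_L(\widetilde{\mathcal{A}}_g)$ given in the statement.

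For the second equation, Eq\eqref{eq:3.5} shows that $X\ltimes\mathcal{C}$ is the mode-1 stack of the blocks $x_g\mathcal{C}$. Since $l=pa$ by Lemma~\ref{lemma:3.1}, splitting $\mathcal{D}$ along mode 1 into blocks $\mathcal{D}_g\in\mathbb{C}^{a\times b\times r}$ turns $X\ltimes\mathcal{C}=\mathcal{D}$ into the $p$ equalities $x_g\mathcal{C}=\mathcal{D}_g$. Applying $\mathcal{V}_L$ to each equality gives $x_g\mathcal{V}_L(\mathcal{C})=\mathcal{V}_L(\mathcal{D}_g)\in\mathbb{C}^{rab}$. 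The lateral slices of $\mathcal{D}_g$ are the $\mathcal{D}_{gj}$ with $j=1,\ldots,d$ (recall $b=d$), which matches the stated form.

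Both reductions are equivalences for each fixed $X$. Hence a vector $X$ solves Eq\eqref{eq:3.1} if and only if it solves the vectorized system, and so one system is solvable exactly when the other is. As a consistency check, this agrees with Lemma~\ref{lemma:3.2}. Condition (i) of that lemma is the solvability of the first vectorized equation, and condition (ii) is the second.

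I expect the main obstacle to be the justification of Eq\eqref{eq:3.5}, that is, the claim that $(X\otimes I_{a\times a\times r})*\mathcal{C}$ is exactly the stack of the blocks $x_g\mathcal{C}$. To verify it, I would expand $\mathrm{bcirc}(X\otimes I_{a\times a\times r})$. Its frontal slices are $X\otimes I_a$ followed by zero slices, so the block circulant matrix is block diagonal, with every diagonal block equal to $X\otimes I_a$. Its product with $\mathrm{unfold}(\mathcal{C})$ therefore acts slice by slice, sending each frontal slice $C_i$ to $(X\otimes I_a)C_i=[x_1C_i;\ldots;x_pC_i]$. The same check, made for $\widetilde{\mathcal{A}}_g$ with the roles of the factors exchanged, gives the first equation.
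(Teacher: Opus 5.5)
Your proof is correct and is essentially the paper's own (implicit) argument. The paper gives no separate proof and presents the theorem as the $\mathcal{V}_L$-vectorization of the block reformulations $\mathcal{A}\ltimes X=\sum_g x_g\widetilde{\mathcal{A}}_g$ and Eq~\eqref{eq:3.5} behind Lemma~\ref{lemma:3.2}; your $\mathrm{bcirc}$ computation supplies the verification it omits. In the ``if'' direction, though, take $m=h$, $n=pk$, $l=pa$, $b=d$ as the standing hypotheses that make the vectorized system meaningful, rather than drawing them from Lemma~\ref{lemma:3.1}, which presupposes a solution.

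Drop the closing consistency check. Linear dependence of $\widetilde{\mathcal{A}}_1,\ldots,\widetilde{\mathcal{A}}_p,\mathcal{B}$ does not force $\mathcal{B}\in\operatorname{span}\{\widetilde{\mathcal{A}}_1,\ldots,\widetilde{\mathcal{A}}_p\}$, and the coupled system needs a single $x$ serving both equations, so the conditions of Lemma~\ref{lemma:3.2} are not literally the solvability of your vectorized system --- which is why your direct route, bypassing that lemma, is the sound one.
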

Next, we provide example illustrating the above Lemma \ref{lemma:3.1}, Lemma \ref{lemma:3.2} and Theorem \ref{th3.3}.
\begin{example}
 Let $\mathcal{A},\mathcal{B},\mathcal{C}$ and $\mathcal{D}$ be $3^{rd}$ order tensors defined as
 \begin{center}
\begin{tabular}{c c c| c c c}
\hline
  \multicolumn{3}{c}{$\mathcal{A}(:,:,1)$} & 
  \multicolumn{3}{c}{$\mathcal{A}(:,:,2)$} \\
  
\hline
1&2&1&2&1&0 \\
1&0&1&3&1&0 \\
0&1&1&0&0&1\\
\hline
\end{tabular},\quad
   \begin{tabular}{c|c}
\hline
  \multicolumn{1}{c}{$\mathcal{B}(:,:,1)$} & 
  \multicolumn{1}{c}{$\mathcal{B}(:,:,2)$} \\ 
  \hline
8&4 \\
4&5 \\
5&3\\
\hline
\end{tabular},\quad \end{center}
\begin{center}
 \begin{tabular}{c c | c c}
\hline
  \multicolumn{2}{c}{$\mathcal{C}(:,:,1)$} & 
  \multicolumn{2}{c}{$\mathcal{C}(:,:,2)$} \\
  
\hline
1&1&1&0 \\
2&1&2&2 \\
\hline
\end{tabular},\quad and 
 \begin{tabular}{c c| c c}
\hline
  \multicolumn{2}{c}{$\mathcal{D}(:,:,1)$} & 
  \multicolumn{2}{c}{$\mathcal{D}(:,:,2)$} \\
  \hline
1&1&1&0 \\
2&1&2&2 \\
2&2&2&0\\
4&2&4&4\\
3&3&3&0\\
6&3&6&6\\
\hline
\end{tabular}.
\end{center}
Then, the solution $X\in \mathbb{C}^3$, and $\widetilde{\mathcal{A}}_{1}\ \widetilde{\mathcal{A}}_{2}$ and $\widetilde{\mathcal{A}}_{3}$ with $\mathcal{B}$ are linearly dependent, where 
\begin{center}
 $\widetilde{\mathcal{A}}_{1}=$\begin{tabular}{c|c}
\hline 
\multicolumn{1}{c}{ $\widetilde{\mathcal{A}}_{1}(:,:,1)$} & 
\multicolumn{1}{c}{$\widetilde{\mathcal{A}}_{1}(:,:,2)$} \\
   \hline 
   1&2\\
   1&3\\
   0&0\\
   \hline
 \end{tabular},
 $\widetilde{\mathcal{A}}_{2}=$ \begin{tabular}{c|c}
\hline
  \multicolumn{1}{c}{$\widetilde{\mathcal{A}}_{1}(:,:,1)$} & 
  \multicolumn{1}{c}{$\widetilde{\mathcal{A}}_{1}(:,:,2)$} \\
   \hline 
   2&1\\
   0&1\\
   1&0\\
   \hline
\end{tabular}, and
 $\widetilde{\mathcal{A}}_{3}=$ \begin{tabular}{c|c}
\hline
  \multicolumn{1}{c}{$\widetilde{\mathcal{A}}_{1}(:,:,1)$} & 
  \multicolumn{1}{c}{$\widetilde{\mathcal{A}}_{1}(:,:,2)$} \\
   \hline 
   1&0\\
   1&0\\
   1&1\\
   \hline
   \end{tabular}.\\
\end{center}
\begin{center}
      $\widetilde{\mathcal{D}}_{1}=$\begin{tabular}{c c|c c}
\hline
  \multicolumn{2}{c}{$\widetilde{\mathcal{D}}_{1}(:,:,1)$} & 
  \multicolumn{2}{c}{$\widetilde{\mathcal{D}}_{1}(:,:,2)$} \\
   \hline 
   1&1&1&0\\
   2&1&2&2\\
   \hline
   \end{tabular},
  $\widetilde{\mathcal{D}}_{2}=$\begin{tabular}{c c|c c}
\hline
  \multicolumn{2}{c}{$\widetilde{\mathcal{D}}_{2}(:,:,1)$} & 
  \multicolumn{2}{c}{$\widetilde{\mathcal{D}}_{2}(:,:,2)$} \\
   \hline 
   2&2&2&0\\
   4&2&4&4\\
   \hline
   \end{tabular}, and 
     $\widetilde{\mathcal{D}}_{3}=$\begin{tabular}{c c|c c}
\hline
  \multicolumn{2}{c}{$\widetilde{\mathcal{D}}_{3}(:,:,1)$} & 
  \multicolumn{2}{c}{$\widetilde{\mathcal{D}}_{3}(:,:,2)$} \\
   \hline 
   3&3&3&0\\
   6&3&6&6\\
   \hline
\end{tabular},
\end{center}
together with $\mathcal{C}$, these tensors are also linearly dependent. It can be easily verified that the conditions of Lemma~\ref{lemma:3.2} are satisfied. Therefore, the equation admits a unique solution given by $X = [1, 2, 3]^T$.
\end{example}
\subsection{The gereral case corresponds to \texorpdfstring{$m\neq h$}{m=h }} 
We investigate the existence of the solution of tensor–vector Eq\eqref{eq:3.1} under two cases, namely 
$m\neq h$ with $l=p$ and
$m\neq h$ with $l\neq p$. Since the analysis of these two cases are essentially analogous, we restrict our attention to the case $m\neq h$. By an argument analogous to that of Lemma~\ref{lemma:3.1}, we obtain the following result.
 \begin{lemma}\label{lemma:3.4}
Assume that Eq\eqref{eq:3.1} admits a solution $X$. Then, the dimensions of the tensors $\mathcal{A}, \mathcal{B}, \mathcal{C}$, and $\mathcal{D}$ necessarily satisfies the following:\vspace{-0.4em}
\begin{enumerate}
    \item[(i)] The ratios $\frac{h}{m}$ and $\frac{n}{k}$ are positive integers, with $(k,\frac{h}{m}) = 1$; (ii) The ratio $\frac{l}{a}$ is a positive integer and $\frac{l}{a} \neq \frac{n}{k}$. Moreover, $p = \frac{l}{a} = \frac{nh}{mk}$ and $b = d$.
\end{enumerate}
\end{lemma}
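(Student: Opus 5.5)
The plan is to repeat the dimension-matching argument of Lemma~\ref{lemma:3.1}, now without assuming $m=h$. The only extra ingredient is the arithmetic fact that for $t=[n,p]$ the cofactors $t/n$ and $t/p$ are coprime.

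First, regard $X\in\mathbb{C}^p$ as a $p\times 1\times 1$ tensor. By the definition of the STP, $\mathcal{A}\ltimes X$ has size $\frac{mt_1}{n}\times\frac{t_1}{p}\times d_1$, with $t_1=[n,p]$ and $d_1=[r,1]=r$. Comparing with $\mathcal{B}\in\mathbb{C}^{h\times k\times r}$ gives
\[
\frac{mt_1}{n}=h,\qquad \frac{t_1}{p}=k.
\]
Since $t_1$ is a common multiple of $n$, the ratio $\frac{h}{m}=\frac{t_1}{n}$ is a positive integer. Moreover $t_1=kp$ and $t_1=\frac{nh}{m}$, so $p=\frac{nh}{mk}$.

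Next I use that $t_1$ is the \emph{least} common multiple of $n$ and $p$. This forces $\gcd\!\left(\frac{t_1}{n},\frac{t_1}{p}\right)=1$, that is, $(k,\frac{h}{m})=1$. From $n\cdot\frac{h}{m}=kp$ and this coprimality, $\frac{h}{m}$ divides $p$. Hence $n=k\cdot\frac{pm}{h}$, so $\frac{n}{k}$ is a positive integer. This completes (i). I expect this coprimality step, which turns the lcm condition into integrality of $n/k$, to be the only non-mechanical point.

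For the second equation, $X\ltimes\mathcal{C}$ uses $t_2=[1,a]=a$ and $d_2=[1,r]=r$. It therefore has size $pa\times\frac{ba}{a}\times r=pa\times b\times r$. Matching with $\mathcal{D}\in\mathbb{C}^{l\times d\times r}$ gives $l=pa$ and $b=d$. So $\frac{l}{a}=p$ is a positive integer, and combining with the first part, $p=\frac{l}{a}=\frac{nh}{mk}$.

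Finally, $\frac{l}{a}=\frac{n}{k}\cdot\frac{h}{m}$. Here $\frac{h}{m}$ is a positive integer different from $1$, because $m\neq h$. Hence $\frac{h}{m}\ge 2$ and $\frac{l}{a}\neq\frac{n}{k}$, which gives (ii).
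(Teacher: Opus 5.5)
Your proposal is correct and follows the same dimension-matching route as the paper. You read off $\frac{mt_1}{n}=h$, $\frac{t_1}{p}=k$, $pt_2=l$, $\frac{bt_2}{a}=d$ with $t_2=a$, use minimality of $t_1=[n,p]$ to get $(k,\frac{h}{m})=1$, and use $\frac{h}{m}\neq 1$ to separate $\frac{l}{a}$ from $\frac{n}{k}$. The one real difference is that you establish coprimality first and then deduce $\frac{n}{k}\in\mathbb{Z}$ from $n\cdot\frac{h}{m}=kp$ by Euclid's lemma, which is a sounder justification than the paper's remark that ``$\frac{n}{k}$ divides $\frac{nh}{k}$''.
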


 \begin{proof}
 Let $X \in \mathbb{C}^p$ satisfy Eq\eqref{eq:3.1}. Then, from Eq\eqref{eq:3.1}, we obtain
$$\frac{mt_{1}}{n}=h, \frac{t_{1}}{p}=k, \qquad pt_{2}=l, \frac{bt_{2}}{a}=d,$$
 where $t_{1}=[n, p] \text{ and }d_{1}=[r,1]=r , \ t_{2}=[1, a] \ \text{ and } \ d_{2}=[r,1]=r$. Thus $\frac{t_{1}}{n}=\frac{h}{m}$ is a positive integer. Then,  $t_{2}=a$ and $p=\frac{l}{t_{2}}=\frac{l}{a}=\frac{t_{1}}{k}=\frac{nh}{mk}$. Thus, $\frac{l}{a}$ is positive integer. \\
Moreover,\vspace{-0.7em}
$$ t_{1}=\frac{nh}{m}=[n,p]=\left[n,\frac{nh}{mk}\right], 
$$
\vspace{-0.5em}
Since $\frac{n}{k}$ divides $\frac{nh}{k}$, it follows that $\frac{n}{k}$ is a positive integer. Moreover, we have
$$
\frac{t_1}{n} = \frac{h}{m}, \quad \frac{t_1}{\frac{nh}{mk}} = k.$$
\vspace{-0.3em}
It then follows that $\frac{h}{m}$ and $k$ are relatively prime. Now, if $\frac{l}{a} = \frac{n}{k}$ and $p = \frac{h}{m} \cdot \frac{n}{k} = \frac{l}{a}$, we would obtain $\frac{h}{m} = 1$, which contradicts the assumption $m \neq h$. This completes the proof.
 \end{proof} \vspace{-0.5em}
 Next, we provide example illustrating the above Lemma \ref{lemma:3.4}.\vspace{-0.7em}
 \begin{example}
 Let $\mathcal{A,B,C}$ and $\mathcal{D}$ be $3^{rd}$ order tensors defined as
\begin{center}
\begin{tabular}{c c c|c c c}
\hline
\multicolumn{3}{c}{$\mathcal{A}(:,:,1)$} & 
\multicolumn{3}{c}{$\mathcal{A}(:,:,2)$} \\
\hline 
1&0&-1&0&1&1\\
\hline
\end{tabular},\quad
\begin{tabular}{c|c}
\hline
\multicolumn{1}{c}{$\mathcal{B}(:,:,1)$} & 
\multicolumn{1}{c}{$\mathcal{B}(:,:,2)$} \\
\hline 
2&1\\
-2&1\\
\hline
\end{tabular},\quad
\begin{tabular}{c c|c c}
\hline
\multicolumn{2}{c}{$\mathcal{C}(:,:,1)$} & 
\multicolumn{2}{c}{$\mathcal{C}(:,:,2)$} \\
\hline 
2&1&0&2\\
\hline
\end{tabular}
\quad
and \begin{tabular}{c c|c c}
\hline
\multicolumn{2}{c}{$\mathcal{D}(:,:,1)$} & 
\multicolumn{2}{c}{$\mathcal{D}(:,:,2)$} \\
\hline 
2&1&0&2\\
-2&-1&0&-2\\
4&2&0&4\\
0&0&0&0\\
-2&-1&0&-2\\
2&1&0&2\\
\hline
\end{tabular} 
\end{center}
By Lemma \ref{lemma:3.4}, it is straight forward to verify that the given tensors are compatible. A straight forward calculation will give the solution is $X=[1 ,-1,2,0,-1,1]^{T}$.
\end{example} \vspace{-0.6em}
We now characterize the explicit structure of $\mathcal{B}$ by exploiting the properties of the STP and the associated dimensional compatibility conditions.\vspace{-0.4em}
\begin{definition}\label{def}(Toeplitz tensor
\cite{J.FathitensorequationAX=BunderSTP}) A third-order tensor $\mathcal{A} = (a_{i_1,i_2,i_3}) \in \mathbb{C}^{n_1 \times n_2 \times n_3}$ is said to be a Toeplitz tensor if its entries depend only on the difference of the first two indices, that is,\vspace{-0.7em}
$$
a_{i_1,i_2,i_3} = g_{\,i_1 - i_2,\, i_3},
\qquad
1 \le i_1 \le n_1,\;
1 \le i_2 \le n_2,\;
1 \le i_3 \le n_3,
$$
where $\mathcal{G} = (g_{k_1,k_2}) \in \mathbb{C}^{(n_1+n_2-1) \times n_3} \ \text{with}
\ 1-n_2 \le k_1 \le n_1-1  \ \text{and}\ 1 \le k_2 \le n_3.$ In this case, $\mathcal{A}$ is generated by $\mathcal{G}$ and is denoted by $\mathcal{A} := \mathrm{toep}(\mathcal{G})$. Equivalently, a third-order tensor is a Toeplitz tensor if every frontal slice of the tensor is a Toeplitz matrix.
\end{definition}
Under the STP, the resulting tensor exhibits a block Toeplitz structure, which follows directly from the STP of two tensors.\\
We now characterize the explicit structure of $\mathcal{B}$ by exploiting the properties of the semi-tensor product and the associated dimensional compatibility conditions.
\begin{theorem}\label{theorem:3.5}
If there exists a solution to Eq\eqref{eq:3.1}, then $\mathcal{B}$ must possess a sub-tensor Toeplitz form. Moreover, the elements of
$\mathcal{D}_g$ satisfy $d_{t+(g-1)a,\,j,\,i}=\alpha_g\,c_{t,\,j,\,i}$,
where $c_{t,j,i},\ \alpha_g \in \mathbb{C}$. Each
$\mathcal{D}_g \in \mathbb{C}^{a \times b \times r}$ denotes a sub-tensor of $\mathcal{D}$, with indices $t=1,\ldots,a$, \ $g=1,\ldots,p$ and $j=1,\ldots,b$.
Furthermore, the tensor $\mathcal{B}$ possesses the block representation given below:
$$ \mathcal{B} =\left[
\begin{array}{c}
\operatorname{Row}_1(\mathcal{B}) \\
\operatorname{Row}_2(\mathcal{B}) \\
\vdots \\
\operatorname{Row}_{\frac{h}{m}}(\mathcal{B}) \\ \hdashline
\vdots \\ \hdashline
\operatorname{Row}_{h-\frac{h}{m}+1}(\mathcal{B}) \\
\operatorname{Row}_{h-\frac{h}{m}+2}(\mathcal{B}) \\
\vdots \\
\operatorname{Row}_h(\mathcal{B})
\end{array}
\right]=
\left[
\begin{array}{c}
\operatorname{Block}_1(\mathcal{B}) \\
\vdots \\
\operatorname{Block}_m(\mathcal{B})
\end{array}
\right]
$$\\
where
$$
\operatorname{Block}_s(B) =
\begin{bmatrix}
\operatorname{Row}_{\frac{(s-1)h}{m}+1}(\mathcal{B}) \\
\vdots \\
\operatorname{Row}_{\frac{sh}{m}}(\mathcal{B})
\end{bmatrix}
$$
is a Toeplitz tensor and $s=1,...,m.$
\end{theorem}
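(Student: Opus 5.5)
We work in the general case $m\neq h$ of Lemma~\ref{lemma:3.4}, so that a solution $X\in\mathbb{C}^p$ forces $\frac{h}{m}\in\mathbb{Z}^+$, $t_1=[n,p]=\frac{nh}{m}$, $p=\frac{nh}{mk}$, and $d_1=r$. The plan is to expand $\mathcal{A}\ltimes X$ explicitly from the definition of the STP and read off the structure of $\mathcal{B}$ row by row. Since $d_1=r$ and the third dimension of $X$ is $1$, the t-product with the padded $X$ acts slice-wise through $\mathrm{bcirc}$. For the row structure it therefore suffices to study each frontal slice:
$$
\mathcal{B}_{(::i)}=\bigl(\mathcal{A}_{(::i)}\otimes I_{h/m}\bigr)\bigl(X\otimes I_{k}\bigr), \qquad \tfrac{t_1}{n}=\tfrac{h}{m},\ \ \tfrac{t_1}{p}=k,
$$
together with the circulant mixing over $i$. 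That mixing only forms linear combinations of slices and acts identically on every row index, so any row-index Toeplitz property is preserved.

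\textbf{Step 1 (row blocks).} Fix a row $s$ of $\mathcal{A}$, $s=1,\dots,m$. The rows $\frac{(s-1)h}{m}+1,\dots,\frac{sh}{m}$ of $\mathcal{A}\otimes I_{h/m}$ are $\operatorname{Row}_s(\mathcal{A})\otimes e_\tau^T$ for $\tau=1,\dots,\frac{h}{m}$. Multiplying by $X\otimes I_k$ gives
$$
\operatorname{Row}_{\frac{(s-1)h}{m}+\tau}(\mathcal{B})_{j}=\sum_{q=1}^{n} a_{s,q,\cdot}\,x_{\lceil ((q-1)h/m+\tau)/k\rceil}\,[\,((q-1)\tfrac{h}{m}+\tau-1)\bmod k = j-1\,],
$$
where the bracket is the indicator of the stated congruence. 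This is the key entrywise formula.

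\textbf{Step 2 (Toeplitz check).} Shifting $\tau\mapsto\tau+1$ and $j\mapsto j+1$ leaves both the residue condition $((q-1)\frac{h}{m}+\tau-1)\equiv j-1 \pmod k$ and the block index $\lceil\cdot/k\rceil$ unchanged, provided no wrap-around across a multiple of $k$ occurs. Here the coprimality $(k,\frac{h}{m})=1$ from Lemma~\ref{lemma:3.4} enters. As $q$ varies, the offsets $(q-1)\frac{h}{m}$ run through all residues mod $k$, and this is what makes the entries of $\operatorname{Block}_s(\mathcal{B})$ depend only on $\tau-j$ and $i$. By Definition~\ref{def}, $\operatorname{Block}_s(\mathcal{B})$ is then a Toeplitz tensor with generator $\mathcal{G}$ built from the products $a_{s,q,i}x_g$.

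\textbf{Step 3 (the $\mathcal{D}$ part).} The statement about $\mathcal{D}_g$ is immediate from \eqref{eq:3.5}: $X\ltimes\mathcal{C}$ stacks $x_g\mathcal{C}$, so $d_{t+(g-1)a,j,i}=\alpha_g c_{t,j,i}$ with $\alpha_g=x_g$. This is exactly condition (ii) of Lemma~\ref{lemma:3.2}, with $b=d$ by Lemma~\ref{lemma:3.4}.

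The main obstacle is Step 2. The wrap-around at multiples of $k$ may break a naive Toeplitz identity, so a careful index bookkeeping is needed. The first thing I would try is to reindex via $q\mapsto (q-1)\frac{h}{m}+\tau$ as a single running column index $u$ of $\mathcal{A}\otimes I_{h/m}$, writing entry $(\tau,j)$ as $\sum_{u\equiv j}$ of a function of $u-\tau$. If exact Toeplitz structure fails in corner entries, I would fall back to the weaker claim that each block is generated by $\mathcal{G}$ in the sense of Definition~\ref{def}, restricted to its admissible index range.
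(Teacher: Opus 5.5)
Your overall route is the paper's: expand $\operatorname{Row}_s(\mathcal{A})\ltimes X$ block by block, read off the Toeplitz structure, and get $\mathcal{D}_g=x_g\mathcal{C}$ directly from \eqref{eq:3.5}. Steps 1 and 3 are correct. In fact there is no circulant mixing at all: $X\otimes\mathcal{I}_{k\times k\times r}$ has a single nonzero frontal slice, so $\mathcal{B}_{(::i)}=(\mathcal{A}_{(::i)}\otimes I_{h/m})(X\otimes I_k)$ exactly.

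The gap is Step 2, which carries the entire Toeplitz claim. You never verify the shift identity, and the reason you offer is not the operative one. You also leave open that it might fail at ``corner entries'', with a fallback that would not yield the stated theorem. The coprimality $(k,\frac{h}{m})=1$, and the fact that the offsets $(q-1)\frac{h}{m}$ sweep all residues mod $k$ (true, since $k\mid n$), only ensure that each entry of the block receives exactly $\frac{n}{k}$ terms. They play no role in the shift invariance.

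What closes the step is the range of the column index. The equality between entries $(\tau+1,j+1)$ and $(\tau,j)$ of $\operatorname{Block}_s(\mathcal{B})_{(::i)}$ only needs checking for $1\le\tau\le\frac{h}{m}-1$ and $1\le j\le k-1$. Suppose $u=(q-1)\frac{h}{m}+\tau$ contributes to entry $(\tau,j)$. Then $u\equiv j\pmod k$, so $u=(g-1)k+j$ with $g=\lceil u/k\rceil$. Hence $u+1=(g-1)k+(j+1)$ with $j+1\le k$, so $\lceil (u+1)/k\rceil=g$ as well. Exactly the same $q$ therefore contribute to entry $(\tau+1,j+1)$, with the same factor $a_{s,q,i}x_g$, and the equality holds term by term. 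Wrap-around never occurs, and coprimality is not needed.

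Equivalently, as in the paper, group by $g$ rather than $q$: $\operatorname{Block}_s(\mathcal{B})_{(::i)}=\sum_{g=1}^{p}x_g\mathcal{A}^g_{(::i)}$. Here $\mathcal{A}^g_{(::i)}$ is the $g$-th window of $k$ consecutive columns of $\operatorname{Row}_s(\mathcal{A}_{(::i)})\otimes I_{h/m}$. That matrix is itself Toeplitz: its $(\tau,u)$ entry is $a_{s,q,i}$ when $u-\tau=(q-1)\frac{h}{m}$ and $0$ otherwise. So every window is Toeplitz, and so is their linear combination. The paper's splitting $k=l_1\frac{h}{m}+l_2$ merely records where the nonzero diagonals of each window sit.
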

\begin{proof}
Based on Lemma~\ref{lemma:3.4}, we consider the case where $X\in \mathbb{C}^P$ satisfies Eq\eqref{eq:3.2}. with $p = \frac{nh}{mk} = \frac{l}{a}$. Since $\frac{h}{m}$ and $k$ are relatively prime, we can express $k$ as $k = l_1 \cdot \frac{h}{m} + l_2$ for some integers $l_1$ and $l_2$. Then, Eq\eqref{eq:3.1} can be rewritten as
\begin{center}
$\operatorname{Row}_s(\mathcal{A})\ltimes X=x_{1}\mathcal{A}_{(::i)}^1+x_2\mathcal{A}_{(::i)}^2+\cdots+x_{\frac{h}{m}}\mathcal{A}_{(::i)}^{\frac{h}{m}}+x_{\frac{h}{m}+1}\mathcal{A}_{(::i)}^{\frac{h}{m}+1}+\cdots+x_p\mathcal{A}_{::i}^{p}=\text{Block}_s(\mathcal{B})\in \mathbb{C}^{\frac{h}{m}\times k\times r},$
\end{center}
 which is a Toeplitz tensor.
 Where
 \begin{align*}
&\mathcal{A}^{1}_{:,:,i}= \left[
\begin{array}{*{14}c} 
   a_{s,1,i}& & & & & & a_{s,l_{1},i} & & & & & a_{s,(l_1+1),i}& &\\
   &\ddots &  & & & & &\ddots & & & & &\ddots & \\
   &&a_{s,1,i}& & & \cdots & & & a_{s,l_{1},i} & & & & & a_{s,(l_1+1),i}\\
   & & &\ddots & & & & & &\ddots & & & &\\
   & & & & a_{s,1,i} & & & & & & a_{s,l_{1},i}& & &\\
\end{array}
\right],\\
&\mathcal{A}^{2}_{:,:,i}=\left[
\begin{array}{*{12}c}
& & &  a_{s,(l_1+2),i} & & & & &  &a_{s,\left[\frac{2k}{h/m}\right]+1,i}& &\\
& & & & \ddots & & & & && \ddots &\\
 a_{s,(l_1+1),i}& & & & &  a_{s,(l_1+2),i} & & &\cdots  & & & a_{s,\left[\frac{2k}{h/m}\right]+1,i}\\
 &\ddots& & & & &\ddots & & & & &\\
 & & a_{s,(l_1+1),i}& & & & &  a_{s,(l_1+2),i} & & & &
 \end{array}
\right],\\
&\vdots
\end{align*}
\begin{align*}
&\mathcal{A}^{\frac{h}{m}}_{:,:,i}=\left[
\begin{array}{*{9}c} 
    & & & &a_{s,k,i}& & & &\\
    & & & & &\ddots & & &\\
    a_{s,k-l_1,i}& & &\cdots& & & a_{s,k,i}& &\\
    &\ddots & & & & & &\ddots &\\
   & & a_{s,k-l_1,i} & & & & & & a_{s,k,i}
\end{array}
\right],\\
&\mathcal{A}^{\frac{h}{m}+1}_{:,:,i}=\\ 
& \small{\left[
\begin{array}{*{11}c} 
 a_{s,k+1,i}& & & & && a_{s,k+l_{1},i}& & & & \\
 &\ddots& & & & &&\ddots& & &  \\
 & &a_{s,k+1,i}& & &\cdots & &&a_{s,k+l_{1},i}& &\\
 & & &\ddots& & & & &&\ddots& \\
 & & & &a_{s,k+1,i}& & & & & &a_{s,k+l_{1},i}\\
\end{array}
\begin{array}{*{4}c}
     a_{s,k+l_{1}+1,i} & & &  \\
     & \ddots&&\\
     &&& a_{s,k+l_{1}+1,i}
\end{array}
\right],}\\
&\vdots\\
&\mathcal{A}^{p}_{:,:,i}=\left[
\begin{array}{*{9}c} 
   & & & &a_{s,n,i}& & & &\\
    & & & & &\ddots & & &\\
    a_{s,n-l_1,i}& & &\cdots& & & a_{s,n,i}& &\\
    &\ddots & & & & & &\ddots &\\
   & & a_{s,n-l_1,i} & & & & & & a_{s,n,i}
\end{array}
\right],
\end{align*}
 Then, by Eq\eqref{eq:3.5} we have 
 $$\operatorname{Row}_g(X) \ltimes \mathcal{C}
= x_g
\begin{bmatrix}
c_{1,1,i} & c_{1,2,i} & \cdots & c_{1,b,i} \\
c_{2,1,i} & c_{2,2,i} & \cdots & c_{2,b,i} \\
\vdots  & \vdots  & \ddots & \vdots  \\
c_{a,1,i} & c_{a,2,i} & \cdots & c_{a,b,i}
\end{bmatrix}
= \mathcal{D}_g
=
\begin{bmatrix}
d_{(g-1)a+1,1,i} & d_{(g-1)a+1,2,i} & \cdots & d_{(g-1)a+1,b,i} \\
d_{(g-1)a+2,1,i} & d_{(g-1)a+2,2,i} & \cdots & d_{(g-1)a+2,b,i} \\
\vdots & \vdots & \ddots & \vdots \\
d_{ga,1,i} & d_{ga,2,i} & \cdots & d_{ga,b,i}
\end{bmatrix}.$$
Hence, we have
$d_{\,t + (g-1)a,\, j,\, i} = \alpha_g \, c_{t,j,i}.$
This completes the proof.
 \end{proof}
The following derivations will be used in the rest of this section.
$$\begin{aligned}
& x_{1}
\begin{bmatrix}
a_{11,i} & a_{12,i} & \cdots & a_{1,l_1,i} & a_{1,l_1+1,i} \\
a_{21,i} & a_{22,i} & \cdots & a_{2,l_1,i} & a_{2,l_1+1,i} \\
\vdots & \vdots & \ddots & \vdots    & \vdots \\
a_{m1,i} & a_{m2,i} & \cdots & a_{m,l_1,i} & a_{m,l_1+1,i}
\end{bmatrix}
+\, x_{{h}/{m}+1}
\begin{bmatrix}
a_{1,k+1,i} & a_{1,k+2,i} & \cdots & a_{1,k+l_1,i} & a_{1,k+l_1+1,i} \\
a_{2,k+1,i} & a_{2,k+2,i} & \cdots & a_{2,k+l_1,i} & a_{2,k+l_1+1,i} \\
\vdots & \vdots & \ddots & \vdots & \vdots \\
a_{m,k+1,i} & a_{m,k+2,i} & \cdots & a_{m,k+l_1,i} & a_{m,k+l_1+1,i}
\end{bmatrix} \\[1em]
&\quad
+ \cdots +
x_{\left(\frac{nb}{k}-1\right){h}/{m}+1}
\begin{bmatrix}
a_{1,n-k+1,i} & a_{1,n-k+2,i} & \cdots & a_{1,n-k+l_1,i} & a_{1,n-k+l_1+1,i} \\
a_{2,n-k+1,i} & a_{2,n-k+2,i} & \cdots & a_{2,n-k+l_1,i} & a_{2,n-k+l_1+1,i} \\
\vdots & \vdots & \ddots & \vdots & \vdots \\
a_{m,n-k+1,i} & a_{m,n-k+2,i} & \cdots & a_{m,n-k+l_1,i} & a_{m,n-k+l_1+1,i}
\end{bmatrix} \\[1em]
&=
\begin{bmatrix}
b_{1,1,i} & b_{1,{h}/{m}+1,i} & \cdots & b_{1,(l_1-1){h}/{m}+1,i} & b_{1,l_1{h}/{m}+1,i} \\
b_{{h}/{m}+1,1,i} & b_{{h}/{m}+1,{h}/{m}+1,i} & \cdots & b_{{h}/{m}+1,(l_1-1){h}/{m}+1,i} & b_{{h}/{m}+1,l_1{h/}{m}+1,i} \\
\vdots & \vdots & \ddots & \vdots & \vdots \\
b_{h-{h}/{m}+1,1,i} & b_{h-{h}/{m}+1,{h}/{m}+1,i} & \cdots &
b_{h-{h}/{m}+1,(l_1-1){h}/{m}+1,i} &
b_{h-{h}/{m}+1,l_1{h}/{m}+1,i}
\end{bmatrix},\\
\end{aligned}$$
$\begin{aligned}
& x_{2}
\begin{bmatrix}
a_{1,l_1+1,i} & a_{1,l_1+2,i} & \cdots & a_{1,\lfloor 2k/(h/m)\rfloor,i} & a_{1,\lfloor 2k/(h/m)\rfloor+1,i} \\
a_{2,l_1+1,i} & a_{2,l_1+2,i} & \cdots & a_{2,\lfloor 2k/(h/m)\rfloor,i} & a_{2,\lfloor 2k/(h/m)\rfloor+1,i} \\
\vdots & \vdots & \ddots & \vdots & \vdots \\
a_{m,l_1+1,i} & a_{m,l_1+2,i} & \cdots & a_{m,\lfloor 2k/(h/m)\rfloor,i} & a_{m,\lfloor 2k/(h/m)\rfloor+1,i}
\end{bmatrix}+\, \\ 
& x_{h/m+2} 
\begin{bmatrix}
a_{1,k+l_1+1,i} & a_{1,k+l_1+2,i} & \cdots & a_{1,k+\lfloor 2k/(h/m)\rfloor,i} & a_{1,k+\lfloor 2k/(h/m)\rfloor+1,i} \\
a_{2,k+l_1+1,i} & a_{2,k+l_1+2,i} & \cdots & a_{2,k+\lfloor 2k/(h/m)\rfloor,i} & a_{2,k+\lfloor 2k/(h/m)\rfloor+1,i} \\
\vdots & \vdots & \ddots & \vdots & \vdots \\
a_{m,k+l_1+1,i} & a_{m,k+l_1+2,i} & \cdots & a_{m,k+\lfloor 2k/(h/m)\rfloor,i} & a_{m,k+\lfloor 2k/(h/m)\rfloor+1,i}
\end{bmatrix}
\\
\end{aligned}$
$$
\begin{aligned}
+ \cdots +
x_{(n/k-1)h/m+2,1}
\begin{bmatrix}
a_{1,n-k+l_1+1,i} & a_{1,n-k+l_1+2,i} & \cdots &
a_{1,n-k+\lfloor 2k/(h/m)\rfloor,i} &
a_{1,n-k+\lfloor 2k/(h/m)\rfloor+1,i} \\
a_{2,n-k+l_1+1,i} & a_{2,n-k+l_1+2,i} & \cdots &
a_{2,n-k+\lfloor 2k/(h/m)\rfloor,i} &
a_{2,n-k+\lfloor 2k/(h/m)\rfloor+1,i} \\
\vdots & \vdots & \ddots & \vdots & \vdots \\
a_{m,n-k+l_1+1,i} & a_{m,n-k+l_1+2,i} & \cdots &
a_{m,n-k+\lfloor 2k/(h/m)\rfloor,i} &
a_{m,n-k+\lfloor 2k/(h/m)\rfloor+1,i}
\end{bmatrix}
\\[1em]
\end{aligned}$$
$$\begin{aligned}
&=\begin{bmatrix}
b_{l_2+1,1,i} & b_{1,h/m-l_2+1,i} & \cdots &
b_{1,(\lfloor 2k/(h/m)\rfloor-l_1-1)h/m-l_2+1,i} \\
b_{h/m+1,l_2+1,i} & b_{h/m+1,h/m-l_2+1,i} & \cdots &
b_{h/m+1,(\lfloor 2k/(h/m)\rfloor-l_1-1)h/m-l_2+1,i} \\
\vdots & \vdots & \ddots & \vdots \\
b_{h-h/m+1,l_2+1,i} & b_{h-h/m+1,h/m-l_2+1,i} & \cdots &
b_{h-h/m+1,(\lfloor 2k/(h/m)\rfloor-l_1-1)h/m-l_2+1,i}
\end{bmatrix},\\
\vdots & \\
& x_{h/m}
\begin{bmatrix}
a_{1,k-l_1,i} & a_{1,k-l_1+1,i} & \cdots & a_{1,k-1,i} & a_{1,k,i} \\
a_{2,k-l_1,i} & a_{2,k-l_1+1,i} & \cdots & a_{2,k-1,i} & a_{2,k,i} \\
\vdots & \vdots & \ddots & \vdots & \vdots \\
a_{m,k-l_1,i} & a_{m,k-l_1+1,i} & \cdots & a_{m,k-1,i} & a_{m,k,i}
\end{bmatrix}
+ x_{2h/m}
\begin{bmatrix}
a_{1,2k-l_1,i} & a_{1,2k-l_1+1,i} & \cdots & a_{1,2k-1,i} & a_{1,2k,i} \\
a_{2,2k-l_1,i} & a_{2,2k-l_1+1,i} & \cdots & a_{2,2k-1,i} & a_{2,2k,i} \\
\vdots & \vdots & \ddots & \vdots & \vdots \\
a_{m,2k-l_1,i} & a_{m,2k-l_1+1,i} & \cdots & a_{m,2k-1,i} & a_{m,2k,i}
\end{bmatrix}
\nonumber \\[1ex]
&+ \cdots
+ x_{p}
\begin{bmatrix}
a_{1,n-l_1,i} & a_{1,n-l_1+1,i} & \cdots & a_{1,n-1,i} & a_{1,n,i} \\
a_{2,n-l_1,i} & a_{2,n-l_1+1,i} & \cdots & a_{2,n-1,i} & a_{2,n,i} \\
\vdots & \vdots & \ddots & \vdots & \vdots \\
a_{m,n-l_1,i} & a_{m,n-l_1+1,i} & \cdots & a_{m,n-1,i} & a_{m,n,i}
\end{bmatrix}
\nonumber \\[2ex]
\end{aligned}$$
$$\begin{aligned}
&~=
\begin{bmatrix}
b_{1,h/m-l_2+1,i} & b_{1,l_2+1,i} & \cdots & b_{1,(l_1-2)h/m+l_2+1,i} & b_{1,k-h/m+1,i} \\
b_{2h/m-l_2+1,1,i} & b_{h/m+1,l_2+1,i} & \cdots & b_{h/m+1,(l_1-2)h/m+l_2+1,i} & b_{h/m+1,k-h/m+1,i} \\
\vdots & \vdots & \ddots & \vdots & \vdots \\
b_{h-l_2+1,1,i} & b_{h-h/m+1,l_2+1,i} & \cdots & b_{h-h/m+1,(l_1-2)h/m+l_2+1,i} & b_{h-h/m+1,k-h/m+1,i}
\end{bmatrix},
\end{aligned} $$
$$\begin{aligned}
&x_1
\begin{bmatrix}
c_{1,1,i} & c_{1,2,i} & \cdots & c_{1,b,i} \\
c_{2,1,i} & c_{2,2,i} & \cdots & c_{2,b,i} \\
\vdots  & \vdots  & \ddots & \vdots  \\
c_{a,1,i} & c_{a,2,i} & \cdots & c_{a,b,i}
\end{bmatrix}
=\begin{bmatrix}
d_{1,1,i} & d_{1,2,i} & \cdots & d_{1,b,i} \\
d_{2,1,i} & d_{2,2,i} & \cdots & d_{2,b,i} \\
\vdots & \vdots & \ddots & \vdots \\
d_{a,1,i} & d_{a,2,i} & \cdots & d_{a,b,i}
\end{bmatrix},\\
&x_2\begin{bmatrix}
c_{1,1,i} & c_{1,2,i} & \cdots & c_{1,b,i} \\
c_{2,1,i} & c_{2,2,i} & \cdots & c_{2,b,i} \\
\vdots  & \vdots  & \ddots & \vdots  \\
c_{a,1,i} & c_{a,2,i} & \cdots & c_{a,b,i}
\end{bmatrix}
=\begin{bmatrix}
d_{a+1,1,i} & d_{a+1,2,i} & \cdots & d_{a+1,b,i} \\
d_{a+2,1,i} & d_{a+2,2,i} & \cdots & d_{a+2,b,i} \\
\vdots & \vdots & \ddots & \vdots \\
d_{2a,1,i} & d_{2a,2,i} & \cdots & d_{2a,b,i}
\end{bmatrix},
\\
&\vdots\\
&x_p
\begin{bmatrix}
c_{1,1,i} & c_{1,2,i} & \cdots & c_{1,b,i} \\
c_{2,1,i} & c_{2,2,i} & \cdots & c_{2,b,i} \\
\vdots  & \vdots  & \ddots & \vdots  \\
c_{a,1,i} & c_{a,2,i} & \cdots & c_{a,b,i}
\end{bmatrix}
=\begin{bmatrix}
d_{l-a+1,1,i} & d_{l-a+1,2,i} & \cdots & d_{l-a+1,b,i} \\
d_{l-a+2,1,i} & d_{l-a+2,2,i} & \cdots & d_{l-a+2,b,i} \\
\vdots & \vdots & \ddots & \vdots \\
d_{l,1,i} & d_{l,2,i} & \cdots & d_{l,b,i}
\end{bmatrix}.
\end{aligned}$$
For simplicity, we focus on the case
$r \cdot k = l_1^{(r)} \cdot \frac{h}{m} + l_2^{(r)}, \quad r = 1, \ldots, \frac{h}{m}$,
which permits the equations to be expressed in the simplified form below
\[
\left\{
\begin{aligned}
&x_1\!\left[\mathcal{A}_1 \; \mathcal{A}_2 \; \cdots \; \mathcal{A}_{l_1} \; \mathcal{A}_{l_1+1}\right]
 + x_{\frac{h}{m}+1}\!\left[\mathcal{A}_{k+1} \; \mathcal{A}_{k+2} \; \cdots \; \mathcal{A}_{k+l_1} \; \mathcal{A}_{k+l_1+1}\right] \\
&\quad + \cdots
 + x_{\left(\frac{n}{k}-1\right)\frac{h}{m}+1}
 \!\left[\mathcal{A}_{n-k+1} \; \mathcal{A}_{n-k+2} \; \cdots \; \mathcal{A}_{n-k+l_1} \; \mathcal{A}_{n-k+l_1+1}\right] \\
&\qquad =
\left[\widetilde{\mathcal{B}}_1 \;
      \widetilde{\mathcal{B}}_{\frac{h}{m}+1} \;
      \cdots \;
      \widetilde{\mathcal{B}}_{(l_1-1)\frac{h}{m}+1} \;
      \widetilde{\mathcal{B}}_{l_1\frac{h}{m}+1}\right], \\[1ex]
&x_2\!\left[\mathcal{A}_{l_1+1} \; \mathcal{A}_{l_1+2} \; \cdots \; \mathcal{A}_{2l_1} \; \mathcal{A}_{2l_1+1}\right]
 + x_{\frac{h}{m}+2}\!\left[\mathcal{A}_{k+l_1+1} \; \mathcal{A}_{k+l_1+2} \; \cdots \; \mathcal{A}_{k+2l_1} \; \mathcal{A}_{k+2l_1+1}\right] \\
&\quad + \cdots
 + x_{\left(\frac{n}{k}-1\right)\frac{h}{m}+2}
 \!\left[\mathcal{A}_{n-k+l_1+1} \; \mathcal{A}_{n-k+l_1+2} \; \cdots \; \mathcal{A}_{n-k+2l_1} \; \mathcal{A}_{n-k+2l_1+1}\right] \\
&\qquad =
\left[\widetilde{\mathcal{B}}_{k_2+1} \;
      \widetilde{\mathcal{B}}_{\frac{h}{m}-l_2+1} \;
      \cdots \;
      \widetilde{\mathcal{B}}_{(l_1-1)\frac{h}{m}-l_2+1} \;
      \widetilde{\mathcal{B}}_{l_1\frac{h}{m}-l_2+1}\right], \\[1ex]
&\vdots \\[1ex]
&x_{\frac{h}{m}}\!\left[\mathcal{A}_{k-l_1} \; \mathcal{A}_{k-l_1+1} \; \cdots \; \mathbb{A}_{k-1} \; \mathcal{A}_k\right]
 + x_{2\frac{h}{m}}\!\left[\mathcal{A}_{2k-l_1} \; \mathcal{A}_{2k-l_1+1} \; \cdots \; \mathcal{A}_{2k-1} \; \mathcal{A}_{2k}\right] \\
&\quad+\cdots
 + x_p\!\left[\mathcal{A}_{n-l_1} \; \mathcal{A}_{n-l_1+1} \; \cdots \; \mathcal{A}_{n-1} \; \mathcal{A}_n\right] \\
&\qquad =
\left[\widetilde{\mathcal{B}}_{k+\frac{h}{m}-l_2} \;
      \widetilde{\mathcal{B}}_{l_2+1} \;
      \cdots \;
      \widetilde{\mathcal{B}}_{(l_1-2)\frac{h}{m}+l_2+1} \;
      \widetilde{\mathcal{B}}_{k-\frac{h}{m}+1}\right], \\[1ex]
&x_1\,[\mathcal{C}_1 \; \mathcal{C}_2 \; \cdots \; \mathcal{C}_a]^T
 = [\mathcal{D}_1 \; \mathcal{D}_2 \; \cdots \; \mathcal{D}_a]^T, \\
&x_2\,[\mathcal{C}_1 \; \mathcal{C}_2 \; \cdots \; \mathcal{C}_a]^T
 = [\mathcal{D}_{a+1} \; \mathcal{D}_{a+2} \; \cdots \; \mathcal{D}_{2a}]^T, \\
&\vdots \\
&x_p\,[\mathcal{C}_1 \; \mathcal{C}_2 \; \cdots \; \mathcal{C}_a]^T
 = [\mathcal{D}_{(p-1)a+1} \; \mathcal{D}_{(p-1)a+2} \; \cdots \; \mathcal{D}_{pa}]^T,
 \end{aligned}
\right.
\]
where
$$
\widetilde{\mathcal{B}} =
\begin{bmatrix}
b_{1,1,i} & b_{1,2,i} & \cdots & b_{1,k,i} & b_{2,1,i} & b_{3,1,i} & \cdots & b_{h/m,1,i} \\
b_{h/m+1,1,i} & b_{h/m+1,2,i} & \cdots & b_{h/m+1,k,i} & b_{h/m+2,1,i} & b_{h/m+3,1,i} & \cdots & b_{2h/m,1,i} \\
\vdots & \vdots & \ddots & \vdots & \vdots & \vdots & \ddots & \vdots \\
b_{h-h/m+1,1,i} & b_{h-h/m+1,2,i} & \cdots & b_{h-h/m+1,k,i} & b_{h-h/m+2,1,i} & b_{h-h/m+3,1,i} & \cdots & b_{h,1,i}
\end{bmatrix}.
$$\\
Here, $\widetilde{\mathcal{B}}_j$ denotes the $j^{th}$ lateral slice of $\mathcal{B}$. In addition, $\mathcal{C}_j$ and $\mathcal{D}_t$ correspond to the $j^{th}$ and $t^{th}$ horizontal slices of the tensors $\mathcal{C}$ and $\mathcal{D}$, respectively.\\
From above discussion we have the following result.
\begin{theorem}\label{theorem3.6}
Solvability of tensor vector Eq\eqref{eq:3.1} is equivalent to the solvability of the tensor-vector equations described in the following system:
\begin{equation}
\left\{
\begin{aligned}
&[\widehat{\mathcal{A}}_1 \ \widehat{\mathcal{A}}_{\frac{h}{m}+1} \ \cdots \ \widehat{\mathcal{A}}_{(\frac{n}{k}-1)\frac{h}{m}+1}] \ltimes Y_1 = \bar{\mathcal{B}}_1,\\
&[\widehat{\mathcal{A}}_2 \ \widehat{\mathcal{A}}_{\frac{h}{m}+2} \ \cdots \ \widehat{\mathcal{A}}_{(\frac{n}{k}-1)\frac{h}{m}+2}] \ltimes Y_2 = \bar{\mathcal{B}}_2,\\
&\qquad \vdots \\
&[\widehat{\mathcal{A}}_{\frac{h}{m}} \ \widehat{\mathcal{A}}_{\frac{2h}{m}} \ \cdots \ \widehat{\mathcal{A}}_{p}] \ltimes Y_{\frac{h}{m}} = \bar{\mathcal{B}}_\frac{h}{m},\\
&x_1 \mathcal{C} = \mathcal{D}_1 ,\\
&\qquad \vdots \\
&x_p \mathcal{C} = \mathcal{D}_p,
\end{aligned}
\right.
\end{equation}
\vspace{-1em}
\noindent
\begin{align*}
\text{where } &\mathcal{A} =
[
\underbrace{\mathcal{A}_1 \ \cdots \ \mathcal{A}_{l_1^1}\  \mathcal{A}_{l_1^1+1}}_{\widehat{\mathcal{A}}_{1}} \ 
\ \cdots \ \mathcal{A}_{l_1^2}\ \mathcal{A}_{l_1^2} \ 
\cdots \ 
\underbrace{\mathcal{A}_{n-l_1^1}\ \mathcal{A}_{n-l_1^1+1} \ \cdots \ A_{n}}_{\widehat{\mathcal{A}}_p}
],
\end{align*}
\begin{align*}
&\overline{\mathcal{B}}_1 =
[
\widetilde{\mathcal{B}}_1 \ 
\widetilde{\mathcal{B}}_{\frac{h}{m}+1} \ 
\cdots \ 
\widetilde{\mathcal{B}}_{(l_1^{1}-1)\frac{h}{m}+1} \ 
\widetilde{\mathcal{B}}_{l_1^1\frac{h}{m}+1}
],\\
&\overline{\mathcal{B}}_2 =
[
\widetilde{\mathcal{B}}_{k+l_2^1} \ 
\widetilde{\mathcal{B}}_{\frac{h}{m}-l_2^1+1} \ 
\cdots \ 
\widetilde{\mathcal{B}}_{(l_2^1-l_1^1-1)\frac{h}{m}-l_2^1+1} \ 
\widetilde{\mathcal{B}}_{(l_1^2-l_1^1)\frac{h}{m}-l_2^1+1}
],\\
& \vdots\\
&\overline{\mathcal{B}}_\frac{h}{m} =
[
\widetilde{\mathcal{B}}_{k+\frac{h}{m}-l_1^1} \ 
\widetilde{\mathcal{B}}_{l_2^1+1} \ 
\cdots \ 
\widetilde{\mathcal{B}}_{(l_1^1-2)+l_2^1+1} \ 
\widetilde{\mathcal{B}}_{k-\frac{h}{m}+1}
],
\end{align*}
and
$Y = [ Y_1 \ Y_2 \ \cdots \ Y_u \ \cdots \ Y_{\frac{h}{m}} ]^T$,
where
$Y_u = [ y_{u1} \ y_{u2} \ \cdots \ y_{u\frac{n}{k}} ]^T,
\quad u = 1,2,\ldots,\frac{h}{m}.$
Consequently, the vector 
$X$ can be expressed as
\[
X = V_c(Y).
\]
\end{theorem}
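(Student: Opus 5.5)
The approach is to show that the coupled system \eqref{eq:3.1} and the system stated in Theorem~\ref{theorem3.6} are the same set of scalar linear equations, only regrouped, with $X=V_c(Y)$ as the bijection between unknowns. Since both systems are linear in the unknowns, establishing this identification proves the equivalence of solvability in both directions at once.

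The first step fixes the dimensional setting. If \eqref{eq:3.1} is solvable, Lemma~\ref{lemma:3.4} gives $p=\frac{l}{a}=\frac{nh}{mk}$, $b=d$ and $(k,\frac{h}{m})=1$. Conversely, the reformulated system is only meaningful under the same dimensions. So throughout we assume these relations, and we write $\rho=\frac{h}{m}$ and $rk=l_1^{(r)}\rho+l_2^{(r)}$ for $r=1,\dots,\rho$.

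Next I would expand the first equation $\mathcal{A}\ltimes X=\mathcal{B}$ frontal slice by frontal slice, as in the proof of Theorem~\ref{theorem:3.5}. Since $d_1=r$, the identity parts in the third mode are trivial and the t-product acts slice-wise through $\mathrm{bcirc}$. In the first two modes, $\mathcal{A}\otimes I_{\rho\times\rho}$ multiplies $X\otimes I_{k}$-type blocks.

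For each row $s$ of $\mathcal{A}$ and each residue class $u\in\{1,\dots,\rho\}$, I would track which entries $x_g$ with $g\equiv u \pmod{\rho}$ contribute to which lateral slices of $\operatorname{Block}_s(\mathcal{B})$. The claim to verify is the following: the coefficient of $x_{(j-1)\rho+u}$ is exactly the group of consecutive lateral slices of $\mathcal{A}$ labelled $\widehat{\mathcal{A}}_{(j-1)\rho+u}$. These slices are the ones beginning at column $(j-1)k+l_1^{(u-1)}+1$. The right-hand side collects the lateral slices $\widetilde{\mathcal{B}}$ listed in $\overline{\mathcal{B}}_u$, with the column offset determined by $l_2^{(u-1)}$.

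This is precisely the family of displayed identities preceding the theorem. It gives the equations $[\widehat{\mathcal{A}}_u\ \widehat{\mathcal{A}}_{\rho+u}\ \cdots]\ltimes Y_u=\overline{\mathcal{B}}_u$ with $Y_u=(x_u,x_{\rho+u},\dots,x_{(\frac nk-1)\rho+u})^T$. Stacking the $Y_u$ as the columns of $Y$ then gives $X=V_c(Y)$.

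Here coprimality of $k$ and $\rho$ is used. It guarantees that as $u$ runs over $1,\dots,\rho$, the offsets $l_2^{(u)}$ run through all residues. Hence every lateral slice of $\mathcal{B}$ appears in exactly one $\overline{\mathcal{B}}_u$, and no equation is lost or duplicated.

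The second equation needs no regrouping. By \eqref{eq:3.5}, $X\ltimes\mathcal{C}=\mathcal{D}$ is equivalent slice-wise to $x_g\mathcal{C}=\mathcal{D}_g$ for $g=1,\dots,p$, exactly as in Lemma~\ref{lemma:3.2} and Theorem~\ref{theorem:3.5}.

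Combining the two parts, a vector $X$ solves \eqref{eq:3.1} if and only if $Y$ with $X=V_c(Y)$ solves the displayed system. The main obstacle is the index bookkeeping in the third step: proving that the Kronecker-inflated product distributes the columns of $\mathcal{A}$ into the blocks $\widehat{\mathcal{A}}$ with the stated offsets $l_1^{(u)}$, $l_2^{(u)}$. I would handle it by first computing a single entry $(\mathcal{A}\ltimes X)_{(s-1)\rho+\sigma,\,c,\,i}=\sum_{q} a_{s,q,i}\,x_{g(q,\sigma,c)}$ explicitly, where $g$ is determined by $(q-1)\rho+\sigma=(g-1)k+c$. I would then read off the residue of $g$ modulo $\rho$ using $(k,\rho)=1$.
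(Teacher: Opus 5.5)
Your decoupling by residue classes is correct and matches the paper's regrouping. Because $(k,\frac{h}{m})=1$, each entry $(\sigma,c)$ of $\operatorname{Block}_s(\mathcal{B})$ involves only the $x_g$ with $g\equiv u \pmod{\rho}$ for a single $u$. Your treatment of $X\ltimes\mathcal{C}=\mathcal{D}$ is also fine.

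The gap is the claim that the displayed system is the same set of scalar equations, ``only regrouped'', with none lost. Count them: $\mathcal{A}\ltimes X=\mathcal{B}$ has $\rho k$ equations per block and frontal slice, but the displayed system has only $k+\rho-1$. The reason is that the $\widetilde{\mathcal{B}}_j$ are not lateral slices of $\mathcal{B}$; those have $h=m\rho$ rows, while $[\widehat{\mathcal{A}}_u\ \widehat{\mathcal{A}}_{\rho+u}\ \cdots]\ltimes Y_u$ has $m$. They are lateral slices of the array $\widetilde{\mathcal{B}}$, which holds only the first row and first column of each $\operatorname{Block}_s(\mathcal{B})$.

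In class $u$, the equation indexed by $c\in\{1,\dots,k\}$ has coefficients determined by $q_0=\lceil((u-1)k+c)/\rho\rceil$. So the same coefficient row occurs for up to $\rho$ consecutive values of $c$. The tensor $\widehat{\mathcal{A}}_u$ contains each such column of $\mathcal{A}$ once, and only one of the repeated equations is kept. Given the kept equations, the $(\rho-1)(k-1)$ discarded ones per block and slice reduce to conditions on the data alone: $b_{(s-1)\rho+\sigma+1,\,c+1,\,i}=b_{(s-1)\rho+\sigma,\,c,\,i}$ for $\sigma<\rho$, $c<k$.

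Hence the implication from \eqref{eq:3.1} to the displayed system holds, but the converse fails in general. Take the example after Theorem~\ref{theorem:3.7} and change $b_{2,2,1}=4$ to $5$. This leaves $\overline{\mathcal{B}}_1$, $\overline{\mathcal{B}}_2$ and $\mathcal{D}$ unchanged, so $X=[2,-1]^T$ still solves the displayed system. But \eqref{eq:3.1} becomes unsolvable, because $(\mathcal{A}\ltimes X)_{2,2,1}=(\mathcal{A}\ltimes X)_{1,1,1}$ for every $X$, while now $b_{2,2,1}\neq b_{1,1,1}$.

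The missing ingredient is Theorem~\ref{theorem:3.5}. The paper establishes it before this result, and its definition of $\widetilde{\mathcal{B}}$ as the array of Toeplitz generators rests on it. You can get it directly from your own entry formula. Write $J=(q-1)\rho+\sigma=(g-1)k+c$; the shift $J\mapsto J+1$ leaves $q$ and $g$ unchanged. For $\sigma<\rho$ and $c<k$, it maps the index set of $(\sigma,c)$ bijectively onto that of $(\sigma+1,c+1)$, so every $\operatorname{Block}_s(\mathcal{A}\ltimes X)$ is Toeplitz. With this added, your argument proves the correct form of the statement: \eqref{eq:3.1} is solvable if and only if each $\operatorname{Block}_s(\mathcal{B})$ is Toeplitz and the displayed system is solvable.

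A minor indexing point: if the $Y_u$ are the columns of $Y$, then $V_c(Y)$ begins $x_1,x_{\rho+1},\dots$. For $V_c(Y)=X$ they must be the rows, $Y=[Y_1\ \cdots\ Y_\rho]^T$, as in the statement.
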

Furthermore, an additional result regarding the existence of the solution of tensor-vector Eq~\eqref{eq:3.1} can be derived from Theorem~\ref{theorem3.6}.
\begin{theorem}\label{theorem:3.7}
Eq\eqref{eq:3.1} admits a solution if and only if there exists a vector $X$ such that the resulting two conditions hold:\vspace{-0.5em}
\begin{enumerate}
    \item[(i)] For each $r = 1, 2, \ldots, \frac{h}{m}$, the tensors 
    $\widehat{\mathcal{A}}_r, \widehat{\mathcal{A}}_{\frac{h}{m}+r}, \ldots, \widehat{\mathcal{A}}_{\left(\frac{n}{k}-1\right)\frac{h}{m}+r}$ and $\bar{\mathcal{B}}_r$ are linearly dependent.\vspace{-0.7em}
    \item[(ii)] The entries of $\mathcal{D}$ satisfy
   $ d_{\,t + (g-1)a, j, i} = \alpha_g \, c_{t,j,i}$\ ,
    where $d_{\,t + (g-1)a, j, i} \in \mathcal{D}_g$, $g = 1, \ldots, p$ denotes the $g$-th block of $\mathcal{D}$, $t = 1, \ldots, a$, and $j = 1, \ldots, d$.
\end{enumerate}\vspace{-0.4em}
In this case, the solution vector is
$X = [\alpha_1, \ldots, \alpha_p]^T$
Moreover, the solution is unique unless $\mathcal{C} = \mathcal{D} = 0$ and the tensors
$\widehat{\mathcal{A}}_r, \widehat{\mathcal{A}}_{\frac{h}{m}+r}, \ldots, \widehat{\mathcal{A}}_{\left(\frac{k}{m}-1\right)\frac{h}{m}+r}$
remain linearly dependent.
\end{theorem}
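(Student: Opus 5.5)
The plan is to derive Theorem~\ref{theorem:3.7} directly from Theorem~\ref{theorem3.6}, which already converts Eq\eqref{eq:3.1} into an equivalent decoupled system. That system consists of $\frac{h}{m}$ smaller tensor--vector equations
$$[\widehat{\mathcal{A}}_r\ \widehat{\mathcal{A}}_{\frac{h}{m}+r}\ \cdots\ \widehat{\mathcal{A}}_{(\frac{n}{k}-1)\frac{h}{m}+r}]\ltimes Y_r=\bar{\mathcal{B}}_r,\qquad r=1,\ldots,\frac{h}{m},$$
together with the $p$ scalar-multiple equations $x_g\mathcal{C}=\mathcal{D}_g$. It is tied back to $X$ by $X=V_c(Y)$.

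Each of the smaller equations has exactly the shape treated in the simplified case $m=h$: the unknown has length $\frac{n}{k}$ and the blocks $\widehat{\mathcal{A}}_{(j-1)\frac{h}{m}+r}$ have the same size as $\bar{\mathcal{B}}_r$. Therefore, by the reformulation preceding Lemma~\ref{lemma:3.2}, the $r$-th equation reads
$$\sum_{j=1}^{n/k} y_{rj}\,\widehat{\mathcal{A}}_{(j-1)\frac{h}{m}+r}=\bar{\mathcal{B}}_r.$$
Applying the vectorization $\mathcal{V}_L$ as in Theorem~\ref{th3.3}, this is an ordinary linear system. It is solvable exactly when $\bar{\mathcal{B}}_r$ lies in the span of the $\widehat{\mathcal{A}}_{(j-1)\frac{h}{m}+r}$, which is the linear-dependence statement (i). Strictly, it is the existence of a dependence relation with a nonzero coefficient on $\bar{\mathcal{B}}_r$; I would make this precise in the same sense as Lemma~\ref{lemma:3.2}(i).

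For the second block, I would use Eq\eqref{eq:3.5} and the slice-by-slice identity written before Theorem~\ref{theorem3.6}. Comparing entries of $x_g\mathcal{C}=\mathcal{D}_g$ gives $d_{t+(g-1)a,j,i}=x_g c_{t,j,i}$ for all $t,j,i$. This is condition (ii) with $\alpha_g=x_g$, and Lemma~\ref{lemma:3.4} guarantees $b=d$, so the index ranges match.

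The coupling step is the main obstacle. The vector $X$ is shared: one $X=V_c(Y)$ must simultaneously solve all $\frac{h}{m}$ systems in (i) and give the proportionality constants in (ii). I would handle this as follows. Whenever $\mathcal{C}\neq 0$, condition (ii) determines each $\alpha_g$ uniquely, since $\alpha_g=d_{t+(g-1)a,j,i}/c_{t,j,i}$ for any nonzero entry of $\mathcal{C}$. The candidate $X=[\alpha_1,\ldots,\alpha_p]^T$ is then reshaped into $Y$ and must satisfy (i). Because the theorem is phrased as ``there exists a vector $X$ such that (i) and (ii) hold,'' this compatibility is built into the hypothesis, and both directions follow from the equivalence in Theorem~\ref{theorem3.6}: a solution of Eq\eqref{eq:3.1} yields $Y$ and $\alpha_g=x_g$ satisfying (i) and (ii), and conversely.

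Uniqueness follows the same dichotomy. If $\mathcal{C}\neq0$, the $\alpha_g$ are forced, so $X$ is unique. If $\mathcal{C}=\mathcal{D}=0$, then (ii) places no constraint on $X$, and only (i) remains. In that case nonuniqueness arises exactly when some family $\widehat{\mathcal{A}}_{(j-1)\frac{h}{m}+r}$ is itself linearly dependent, since then the kernel of the corresponding vectorized system is nontrivial. This gives the stated exception.
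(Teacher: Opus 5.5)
Your proposal follows the paper's route: Theorem~\ref{theorem:3.7} is read off from the decoupled system of Theorem~\ref{theorem3.6}, with each subsystem handled as in Lemma~\ref{lemma:3.2} and each relation $x_g\mathcal{C}=\mathcal{D}_g$ compared entrywise, and your two repairs (requiring $\bar{\mathcal{B}}_r$ to lie in the span rather than mere linear dependence, and one shared $X=V_c(Y)$ for (i) and (ii)) are what make the statement precise. One caveat comes from Theorem~\ref{theorem3.6}, not from you: $\bar{\mathcal{B}}_r$ is built from $\widetilde{\mathcal{B}}$, which records only the first row and first column of each block of $\mathcal{B}$, so the converse direction also needs the Toeplitz block structure of Theorem~\ref{theorem:3.5} (otherwise (i) and (ii) can hold while $\mathcal{A}\ltimes X\neq\mathcal{B}$).
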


Next, we provide example illustrating the above theorem \ref{eq:3.5}, Theorem \ref{theorem3.6}.\vspace{-0.5em}
\begin{example} and Theorem \ref{theorem:3.7}.
Let $\mathcal{A,\ B,\ C}$ and $\mathcal{D}$ be t order tensors defined as  \begin{center}
    \begin{tabular}{c c c|c c c}
\hline
  \multicolumn{3}{c}{$\mathcal{A}(:,:,1)$} & 
  \multicolumn{3}{c}{$\mathcal{A}(:,:,2)$} \\
  
\hline
2&1&-1&4&2&-2\\
3&2&1&1&4&2\\
\hline
\end{tabular}, \ 
\begin{tabular}{ccc|ccc}
\hline
  \multicolumn{3}{c}{$\mathcal{B}(:,:,1)$} & 
  \multicolumn{3}{c}{$\mathcal{B}(:,:,2)$} \\
  \hline
    4 &1&2&8&2&4  \\
     -1&4&1&-2&8&2\\
     6&-1&4&2&-2&8\\
     -2&6&-1&-4&2&-2
\end{tabular},\quad
    \begin{tabular}{c c|c c}
    \hline
  \multicolumn{2}{c}{$\mathcal{C}(:,:,1)$} & 
  \multicolumn{2}{c}{$\mathcal{C}(:,:,2)$} \\
  \hline
         3&1&-1&1  \\
         1&3&0&-1 
    \end{tabular},\quad
\begin{tabular}{cc|cc}
\hline
  \multicolumn{2}{c}{$\mathcal{D}(:,:,1)$} & 
  \multicolumn{2}{c}{$\mathcal{D}(:,:,2)$} \\
  
\hline
    6 &2&-2&2  \\
     2&6&0&-2\\
     -3&-1&1&-1\\
     -1&-3&0&1
\end{tabular}. \
\end{center}
Here $\frac{h}{m}=2,\ \frac{n}{k}=1 ,\ \frac{l}{a}=1$ and $k=3=1.2+1,\ p=2.$ Then we have \begin{center}
    \begin{tabular}{cc|cc}
    \hline
  \multicolumn{2}{c}{$\widehat{\mathcal{A}}_{1}(:,:,1)$} & 
  \multicolumn{2}{c}{$\widehat{\mathcal{A}}_{1}(:,:,2)$} \\
  \hline
        2 &1&4&2  \\
         3&2&1&4 
    \end{tabular}, \quad
    \begin{tabular}{cc|cc}
    \hline
  \multicolumn{2}{c}{$\widehat{\mathcal{A}}_{2}(:,:,1)$} & 
  \multicolumn{2}{c}{$\widehat{\mathcal{A}}_{2}(:,:,2)$} \\
  \hline
        1 &-1&2&-2  \\
         2&1&4&2 
    \end{tabular}, \quad 
    \begin{tabular}{cccc|cccc}
    \hline
  \multicolumn{4}{c}{$\widetilde{\mathcal{B}}(:,:,1)$} & 
  \multicolumn{4}{c}{$\widetilde{\mathcal{B}}(:,:,2)$}\\
  \hline
        4&1&2&-1&8&2&4&-2  \\
         6&-1&4&-2&2&-2&8&4
    \end{tabular},\quad 
\end{center}
\begin{center}
\begin{tabular}{cc|cc}
    \hline
  \multicolumn{2}{c}{$\overline{\mathcal{B}}_{1}(:,:,1)$} & 
  \multicolumn{2}{c}{$\overline{\mathcal{B}}_{1}(:,:,2)$} \\
  \hline
        4&2&8&4 \\
         6&4&2&8 
    \end{tabular}, \quad
    \begin{tabular}{cc|cc}
    \hline
  \multicolumn{2}{c}{$\overline{\mathcal{B}}_{2}(:,:,1)$} & 
  \multicolumn{2}{c}{$\overline{\mathcal{B}}_{2}(:,:,2)$} \\
  \hline
        -1&1&-2&2 \\
         -2&-1&-4&-2
    \end{tabular}, \quad 
    \begin{tabular}{cc|cc}
    \hline
  \multicolumn{2}{c}{${\mathcal{D}}_{1}(:,:,1)$} & 
  \multicolumn{2}{c}{${\mathcal{D}}_{1}(:,:,2)$} \\
  \hline
        6&2&-2&2 \\
         2&6&0&-2 
    \end{tabular}, \quad
     \begin{tabular}{cc|cc}
    \hline
  \multicolumn{2}{c}{${\mathcal{D}}_{2}(:,:,1)$} & 
  \multicolumn{2}{c}{${\mathcal{D}}_{2}(:,:,2)$} \\
  \hline
        -3&-1&1&-1 \\
         -1&-3&0&1
\end{tabular}
\end{center}
Moreover, the tensors $\widehat{\mathcal{A}}_u$ and $\bar{\mathcal{B}}_u$ are linearly dependent, and $\mathcal{C}$ and $\mathcal{D}_g$ satisfy multiple dependencies for $u = g = 1, 2$. Therefore, by Theorem~\ref{theorem3.6}, the unique solution is
$Y_1 = 2, \quad Y_2 = -1, \quad X = [2, -1]^T.$
\end{example}
Based on the definition of a circulant matrix, we now introduce its tensor analogue, referred to as a circulant tensor.
\begin{definition}\label{toeplitz def}
(Circulant tensor) Denote $[n]=\{1,2,\ldots,n\}.$ Let $\mathcal{A} = (a_{i_1 \cdots i_m})$ be a real $m^{th}$ order $n$-dimensional tensor. If for $i_l, k_l \in [n]$ satisfying 
$k_l \equiv i_l + 1 \pmod{n}, \ l \in [m],$ we have
$$a_{i_1 \cdots i_{m-1} i_m} = a_{k_1 \cdots k_{m-1} i_m},$$
then we say that $\mathcal{A}$ is an $m^{th}$ order circulant tensor. $\mathcal{A} := \operatorname{circ}(\mathcal{A}),$
is referred as Circulant tensor
whenever all of its frontal slices are Circulant matrices.
\end{definition}
\begin{definition}(F-diagonal tensor \cite{M.Kilmer2011factorizationfortensor})
A third order tensor $\mathcal{D}\in \mathbb{C}^{n\times n\times  r}$ is F-diagonal, if it's frontal slices are all diagonal. 
\end{definition}
\begin{remark}
    Assume that the tensor–vector Eq\eqref{eq:3.1} has a solution. If $\mathcal{A}$ is a F-diagonal tensor with identical diagonal entries, then each sub-tenosr of $\mathcal{B}$ exhibits a Circulant tensor structure.
\end{remark}\vspace{-0.5em}
\section{Solvability conditions for tensor equations with matrix solutions }
In this part, we analyze the existence of solution of the resulting tensor-matrix equation using the STP:
\begin{equation}\label{eq:4.1}
\left\{
\begin{aligned}
& \underbrace{\mathcal{A} \ltimes{X}}_{\frac{m t_1}{n} \times \frac{qt_1}{p} \times d_1} = \mathcal{\mathcal{B}}, \\
& \underbrace{{X} \ltimes \mathcal{C}}_{\frac{p t_{2}}{q} \times \frac{b t_{2}}{a} \times d_2} = \mathcal{D},
\end{aligned}
\right.
\end{equation}
here, $\mathcal{A} \in \mathbb{C}^{m \times n \times r}$, $\mathcal{B} \in \mathbb{C}^{h \times k \times r}$, $\mathcal{C} \in \mathbb{C}^{a \times b \times r}$, and $\mathcal{D} \in \mathbb{C}^{l \times b \times r}$ represent known tensors, and $X \in \mathbb{C}^{p \times q}$ represents the unknown matrix. The parameters are defined as $t_1 =[n, p]$, $t_2 = [q, a]$, and $d_1 = d_2 = [1, r]$. First, the simplified case with $m = h$ and $l = p$ is considered, followed by the general scenario.
\subsection{The simplified case with \texorpdfstring{$m=h$}{m=h} and \texorpdfstring{$l=p$}{l=p}}
The solvability of the tensor-matrix Eq\eqref{eq:4.1} is analyzed in this section. The following lemma is obtained by applying the STP.
\begin{lemma}\label{lemma:4.1}
Let $X \in \mathbb{C}^{p \times q}$ be a solution of the tensor-matrix Eq\eqref{eq:4.1}. Then, the dimensions of the tensors $\mathcal{A}, \mathcal{B}, \mathcal{C}$, and $\mathcal{D}$ necessarily hold the following conditions:\vspace{-0.5em}
\begin{enumerate}
    \item[(i)] The ratios $\frac{n}{l}$ and $\frac{d}{b}$ are positive integers; (ii) The ratios $\frac{ad}{b}$ and $\frac{n}{l}$ divide $k$, and $q = \frac{ad}{b} = \frac{l k}{n}$.
\end{enumerate}
\end{lemma}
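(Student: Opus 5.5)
The plan is to follow Lemma~\ref{lemma:3.1} and Lemma~\ref{lemma:3.4}: use only the definition of the STP to write down the dimensions of $\mathcal{A}\ltimes X$ and $X\ltimes\mathcal{C}$, and match them with those of $\mathcal{B}$ and $\mathcal{D}$. I work in the simplified setting of this subsection, $m=h$ and $l=p$. I read the second dimension of $\mathcal{D}$ as $d$ (so $\mathcal{D}\in\mathbb{C}^{l\times d\times r}$), as the statement of the lemma requires. The frontal dimension causes no trouble: $d_1=d_2=[1,r]=r$ matches the third dimension of $\mathcal{B}$ and $\mathcal{D}$.

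\emph{First equation.} Here $t_1=[n,p]$, and matching $\mathcal{A}\ltimes X$ with $\mathcal{B}$ gives
\[
\frac{mt_1}{n}=h,\qquad \frac{qt_1}{p}=k .
\]
Since $m=h$, the first relation forces $t_1=n$, i.e.\ $[n,p]=n$, so $p\mid n$. Because $l=p$, this says $\frac{n}{l}$ is a positive integer. The second relation then reads $\frac{qn}{p}=k$, i.e.
\[
q=\frac{pk}{n}=\frac{lk}{n},\qquad k=q\cdot\frac{n}{l}.
\]
Consequently $\frac{n}{l}$ divides $k$.

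\emph{Second equation.} Here $t_2=[q,a]$, and matching $X\ltimes\mathcal{C}$ with $\mathcal{D}$ gives
\[
\frac{pt_2}{q}=l,\qquad \frac{bt_2}{a}=d .
\]
Using $l=p$, the first relation gives $t_2=q$, i.e.\ $[q,a]=q$, so $a\mid q$. Substituting into the second relation gives $\frac{bq}{a}=d$, hence
\[
q=\frac{ad}{b},\qquad \frac{d}{b}=\frac{q}{a}\in\mathbb{Z}_{>0}.
\]

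\emph{Combining.} Equating the two expressions for $q$ gives $q=\frac{ad}{b}=\frac{lk}{n}$. Since $k=q\cdot\frac{n}{l}$ is a product of two positive integers, both $\frac{ad}{b}=q$ and $\frac{n}{l}$ divide $k$, which yields (ii).

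The only delicate step is converting the lcm identities $[n,p]=n$ and $[q,a]=q$ into divisibility statements. This uses the elementary fact that $[x,y]=x$ if and only if $y\mid x$, and it should be invoked explicitly. Beyond that, the argument only needs the hypotheses $m=h$ and $l=p$ to be used at the right moments, so that $t_1$ and $t_2$ are pinned down before the remaining ratios are computed.
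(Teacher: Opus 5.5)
Your proof is correct and follows the paper's argument essentially step for step. You match the STP output sizes $\frac{mt_1}{n}\times\frac{qt_1}{p}\times r$ and $\frac{pt_2}{q}\times\frac{bt_2}{a}\times r$ with those of $\mathcal{B}$ and $\mathcal{D}$, use $m=h$ and $l=p$ to force $t_1=n$ and $t_2=q$, and read off $q=\frac{ad}{b}=\frac{lk}{n}$, exactly as the paper does. You also justify the divisibility claims explicitly, via $[x,y]=x \iff y\mid x$ and $k=q\cdot\frac{n}{l}$, which the paper only asserts.
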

\begin{proof}
 Let $X\in \mathbb{C}^{p\times q}$ satisfy Eq\eqref{eq:4.1} and from Eq\eqref{eq:4.1} we get
$$\frac{mt_{1}}{n}=h, \frac{qt_{1}}{p}=k,\quad \frac{pt_{2}}{q}=l, \frac{bt_{2}}{a}=d,$$
\vspace{-0.3em}
\noindent where $ t_{1}=[n, p] \text{ and }d_{1}=[r,1]=r ,\ t_{2}=[q, a] \ \text{ and } \ d_{2}=[r,1]=r .$\\
Hence, $\frac{t_1}{n} = \frac{h}{m}$ is a positive integer. Under the assumptions $m = h$ and $l = p$, it follows that $t_1 = n$, $t_2 = q$, and$
\frac{t_2}{a} = \frac{d}{b} = \frac{q}{a}, \ \frac{t_1}{p} = \frac{n}{l} = \frac{k}{q}.$
Therefore, $p = l$, $q = \frac{ad}{b}$, and both $\frac{d}{b}$ and $\frac{n}{l}$ are positive integers. Moreover, $q = \frac{ad}{b}$ and $\frac{n}{l}$ are divides $k$, this completes the proof.
\end{proof}
\noindent Based on the above analysis, we now investigate the existence of solution of the tensor-matrix
Eq\eqref{eq:4.1}. By exploiting the structure of the STP,
Eq\eqref{eq:4.1} can be equivalently transformed into the given form below:
\[
\mathcal{A}\ltimes X=
\bigl[
\widehat{\mathcal{A}}_{1}\;
\widehat{\mathcal{A}}_{2}\;
\cdots\;
\widehat{\mathcal{A}}_{p}
\bigr]
*
\bigl[
X_{1}\otimes \mathcal{I}_{\frac{n}{l}\times\frac{n}{l}\times r}\;
X_{2}\otimes \mathcal{I}_{\frac{n}{l}\times\frac{n}{l}\times r}\;
\cdots\;
X_{q}\otimes \mathcal{I}_{\frac{n}{l}\times\frac{n}{l}\times r}
\bigr]
=
\bigl[
\widehat{\mathcal{B}}_{1}\;
\widehat{\mathcal{B}}_{2}\;
\cdots\;
\widehat{\mathcal{B}}_{q}
\bigr].
\]
Furthermore, Eq\eqref{eq:4.1} can also be rewritten as
\begin{equation}\label{eq:4.2}
X \ltimes \mathcal{C}
=\bigl[
\overline{X}_1\;
\overline{X}_2\;
\cdots\;
\overline{X}_a
\bigr]
*\begin{bmatrix}
\widehat{\mathcal{C}}_1 &
\widehat{\mathcal{C}}_2 &
\cdots &
\widehat{\mathcal{C}}_a
\end{bmatrix}^{T}
=\sum_{g=1}^{a}
\overline{X}_g * \widehat{\mathcal{C}}_g
=\mathcal{D}.
\end{equation}
Here, $\widehat{\mathcal{A}}_j$ and $\widehat{\mathcal{B}}_j$ denote block tensors
of $\mathcal{A}$ and $\mathcal{B}$, respectively. Moreover,
\[\overline{X}_g
=\bigl[
X_{(g-1)\frac{d}{b}+1}\;
X_{(g-1)\frac{d}{b}+2}\;
\cdots\;
X_{g\frac{d}{b}}
\bigr]
\otimes I_{1\times1\times r},
\qquad
\widehat{\mathcal{C}}_g
=\mathcal{C}_g \otimes \mathcal{I}_{\frac{d}{b}\times \frac{d}{b}},\]
where $X_j$ denotes the $j^{th}$ column of $X$, and $\mathcal{C}_j$ represents the
$j^{th}$ horizontal slice of tensor $\mathcal{C}$. The dimensions satisfy
$\widehat{\mathcal{A}}_j,\, \widehat{\mathcal{B}}_j \in
\mathbb{C}^{m\times \frac{n}{l}\times r},\
\overline{X}_g \in \mathbb{C}^{p\times \frac{d}{b}},$
with $g=1,2,\ldots,a$ and $j=1,2,\ldots,q$. As a result, the following conclusions can be obtained.
\begin{theorem}
Eq\eqref{eq:4.1} admits a solution if and only if there exists a matrix $X \in \mathbb{C}^{p\times q}$ such that the resulting two conditions hold:\vspace{-0.5em}
\begin{enumerate}
\item[(i)] For each $j = 1,\ldots,q$, the collection of tensors
$\widehat{\mathcal{A}}_1, \widehat{\mathcal{A}}_2, \ldots, \widehat{\mathcal{A}}_p$
together with $\widehat{\mathcal{B}}_j$ is linearly dependent.\vspace{-0.5em}
\item[(ii)] The tensors
$\widehat{\mathcal{C}}_1, \widehat{\mathcal{C}}_2, \ldots, \widehat{\mathcal{C}}_a$
and $\mathcal{D}$ are linearly dependent.
\end{enumerate}\vspace{-0.5em}
Moreover, in order to explicitly construct solutions of the tensor
Eq\eqref{eq:4.1}, Eq\eqref{eq:4.2} can be further converted into a
vectorized form by applying the operator $\mathcal{V}_L(\cdot)$.
\end{theorem}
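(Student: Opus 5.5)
The proof is a direct decomposition argument, modelled on Lemma~\ref{lemma:3.2} and Theorem~\ref{theorem3.6}. By Lemma~\ref{lemma:4.1} we have $t_1=n$, $t_2=q$, $p=l$, $q=\frac{ad}{b}$ and $\frac{n}{l}\mid k$. This lets us write both products in Eq~\eqref{eq:4.1} as finite sums of scalar multiples of fixed tensors, with the entries of $X$ as coefficients. Solvability of each equation then becomes a linear-dependence statement about fixed tensors, and Eq~\eqref{eq:4.1} is solvable exactly when one $X$ realises both dependencies at once.

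\emph{First equation.} Partition $\mathcal{A}$ laterally into the $p$ blocks $\widehat{\mathcal{A}}_1,\ldots,\widehat{\mathcal{A}}_p\in\mathbb{C}^{m\times\frac{n}{l}\times r}$. Partition $\mathcal{B}$ into $q$ blocks $\widehat{\mathcal{B}}_j$ of the same size; this is possible because $k=q\frac{n}{l}$. The key computation is for the column $X_j\otimes\mathcal{I}_{\frac{n}{l}\times\frac{n}{l}\times r}$. Its only nonzero frontal slice is the first, and that slice equals $X_j\otimes I_{n/l}$. Hence its block circulant matrix is block diagonal in the frontal index, and
\[
\mathcal{A}\ltimes X=\Bigl[\textstyle\sum_{g=1}^{p}x_{g1}\widehat{\mathcal{A}}_g\ \cdots\ \sum_{g=1}^{p}x_{gq}\widehat{\mathcal{A}}_g\Bigr].
\]
So $\mathcal{A}\ltimes X=\mathcal{B}$ holds if and only if $\widehat{\mathcal{B}}_j=\sum_{g}x_{gj}\widehat{\mathcal{A}}_g$ for every $j$. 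In other words, for each $j$ the tensors $\widehat{\mathcal{A}}_1,\ldots,\widehat{\mathcal{A}}_p$ together with $\widehat{\mathcal{B}}_j$ are linearly dependent, with $\widehat{\mathcal{B}}_j$ lying in the span and the $j$-th column of $X$ as coefficients. This gives condition (i).

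\emph{Second equation.} Use Eq~\eqref{eq:4.2} in the same way. Here $X\otimes\mathcal{I}_{1\times1\times r}$ again has a single nonzero frontal slice, so the t-product with $\widehat{\mathcal{C}}_g=\mathcal{C}_g\otimes\mathcal{I}_{\frac{d}{b}\times\frac{d}{b}}$ acts slice-wise. Expanding each $\overline{X}_g$ entrywise gives
\[
\mathcal{D}=\sum_{g=1}^{a}\overline{X}_g*\widehat{\mathcal{C}}_g,
\]
a linear combination of the tensors built from $\widehat{\mathcal{C}}_1,\ldots,\widehat{\mathcal{C}}_a$ with coefficients the entries of $X$. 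This is condition (ii).

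\emph{Both directions.} For necessity, any solution $X$ supplies the coefficients in both combinations. For sufficiency, take an $X$ realising both combinations and reverse the identities above. The statement asks for one $X$ working for (i) and (ii) simultaneously, which is exactly what this gives. Applying $\mathcal{V}_L$ to both sides turns each identity into an ordinary linear system in $V_c(X)$, as in Theorem~\ref{th3.3}; stacking the two systems gives the explicit construction mentioned at the end of the statement.

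The main obstacle is the index bookkeeping in the second equation. One must check that the blocks $\overline{X}_g$, taken as groups of $\frac{d}{b}$ consecutive columns of $X$, line up with the horizontal slices of $\mathcal{C}$ under the Kronecker embedding. One must also check that $X\otimes\mathcal{I}_{1\times1\times r}$ really contributes only through the first frontal slice, so that no circulant mixing across frontal slices appears. I would verify this first on the bcirc matrices, and then read off the sum in Eq~\eqref{eq:4.2}.
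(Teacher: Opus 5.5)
Your proposal is correct and follows the paper's route. The paper gives no separate proof: it obtains the theorem directly from the block form $\mathcal{A}\ltimes X=[\widehat{\mathcal{A}}_1\ \cdots\ \widehat{\mathcal{A}}_p]*[X_1\otimes\mathcal{I}_{\frac{n}{l}\times\frac{n}{l}\times r}\ \cdots\ X_q\otimes\mathcal{I}_{\frac{n}{l}\times\frac{n}{l}\times r}]=[\widehat{\mathcal{B}}_1\ \cdots\ \widehat{\mathcal{B}}_q]$ and the expansion $X\ltimes\mathcal{C}=\sum_{g=1}^{a}\overline{X}_g*\widehat{\mathcal{C}}_g=\mathcal{D}$ of Eq~\eqref{eq:4.2}, which is exactly what you compute, with more detail on why only the first frontal slice contributes. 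You also read (i) and (ii) correctly, as span membership realised by a single $X$: the columns $X_j$ are the coefficients in (i), and the column blocks $\overline{X}_g$ act as matrix coefficients in (ii). Under the literal ``linearly dependent'' wording the statement would not hold, since dependence alone is too weak and in (ii) $\widehat{\mathcal{C}}_g$ and $\mathcal{D}$ need not even have the same size.
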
\vspace{-0.5em}
Next, we provide example illustrating the above Lemma \ref{lemma:4.1}.\vspace{-0.5em}
\begin{example} Let $\mathcal{A,B,C}$ and $\mathcal{D}$ be $3^{rd}$-order tensors defined as
    \begin{center}
         \begin{tabular}{c c c c|c c c c}
\hline
  \multicolumn{4}{c}{$\mathcal{A}(:,:,1)$} & 
  \multicolumn{4}{c}{$\mathcal{A}(:,:,2)$} \\
   \hline 
   1&2&-1&0&0&3&-1&2\\
   3&1&2&-1&1&0&3&-1\\
   -1&3&1&2&0&1&0&3\\
   \hline
   \end{tabular},\quad
   \begin{tabular}{c c c c|c c c c}
\hline
  \multicolumn{4}{c}{$\mathcal{B}(:,:,1)$} & 
  \multicolumn{4}{c}{$\mathcal{B}(:,:,2)$} \\
   \hline 
   -2&2&-3&-2&-3&9&-2&1\\
   9&-2&1&-3&10&-3&5&-2\\
   2&9&3&1&0&10&0&5\\
   \hline
   \end{tabular},\quad
   \end{center}
   \begin{center}
   \begin{tabular}{c c c|c c c}
\hline
  \multicolumn{3}{c}{$\mathcal{C}(:,:,1)$} & 
  \multicolumn{3}{c}{$\mathcal{C}(:,:,2)$} \\
   \hline 
   1&2&0&3&0&1\\
\hline
\end{tabular}\quad
and \quad
\begin{tabular}{c c c c c c|c c c c c c}
\hline
\multicolumn{6}{c}{$\mathcal{D}(:,:,1)$} & 
\multicolumn{6}{c}{$\mathcal{D}(:,:,2)$} \\
\hline 
1&-1&2&-2&0&0&3&-3&0&0&1&-1\\
3&2&6&4&0&0&9&6&0&0&3&2\\
\hline
\end{tabular}. 
\end{center}
By Lemma \ref{lemma:4.1}, it is straightforward to verify that the given tensors are compatible.  A straight forward calculation will give the solution is
$X=\begin{bmatrix}
         1&-1\\
         3&2\\
     \end{bmatrix}.$
\end{example}
\subsection{The general case corresponds to \texorpdfstring{$m=h$}{m=h} with \texorpdfstring{$l\neq p$}{l=p}}
We study the existence of the solution of the tensor-matrix Eq\eqref{eq:4.1}
in the particular case where $m = h$ and $l = p$. Throughout the discussion, the tensors satisfy $\mathcal{A} \in \mathbb{C}^{m \times n \times r}$, $\mathcal{B} \in \mathbb{C}^{h \times k \times r}$, $\mathcal{C} \in \mathbb{C}^{a \times b \times r}$, and $\mathcal{D} \in \mathbb{C}^{l \times d \times r}$.
Adopting the methodology developed in Lemma~\ref{lemma:4.1}, we first establish the necessary conditions under which Eq\eqref{eq:4.1} admits a solution.
\begin{lemma}\label{lemma:4.3}
Let $X \in \mathbb{C}^{p\times q}$ be a solution to the tensor Eq\eqref{eq:4.1}. Then the dimensions of the tensors
$\mathcal{A}$, $\mathcal{B}$, $\mathcal{C}$, and $\mathcal{D}$ necessarily hold the following conditions:\vspace{-0.5em}
\begin{enumerate}
\item[(i)] The ratio $\frac{d}{b}$ is a positive integer; (ii) The dimensions obey
$p=\frac{l}{\beta}=\frac{n}{\alpha},\  q=\frac{ad}{b\beta}=\frac{k}{\alpha}$,
where $\alpha$ and $\beta\neq 1$ denote the greatest common divisors of $n$ with
$k$ and $l$ with $a$, respectively. In addition,
$\left[\beta,\frac{d}{b}\right]=1.$
\end{enumerate}
\end{lemma}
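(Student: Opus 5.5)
I will follow the template of Lemma~\ref{lemma:4.1} and Lemma~\ref{lemma:3.4}: the proof starts by writing down what the STP definition forces on the output dimensions, and then solves those relations for $p$ and $q$. Suppose $X\in\mathbb{C}^{p\times q}$ solves Eq\eqref{eq:4.1}, with $t_1=[n,p]$, $t_2=[q,a]$ and $d_1=d_2=r$. Matching the sizes of $\mathcal{A}\ltimes X$ with $\mathcal{B}$ and of $X\ltimes\mathcal{C}$ with $\mathcal{D}$ gives
$$\frac{mt_1}{n}=h,\quad \frac{qt_1}{p}=k,\qquad \frac{pt_2}{q}=l,\quad \frac{bt_2}{a}=d .$$
Since $m=h$, the first relation gives $t_1=n$, so $p\mid n$ and $k=\frac{qn}{p}$. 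The fourth relation gives $t_2=\frac{ad}{b}$. Because $a\mid t_2$, we get $\frac{t_2}{a}=\frac{d}{b}$, which is a positive integer; this proves (i).

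Next I will extract $\alpha$ and $\beta$. Set $\alpha=\frac{n}{p}$, which is an integer because $p\mid n$. Then $k=q\alpha$ and $n=p\alpha$. To identify $\alpha$ with a gcd, I will use the structure of the reduction in \S4.1. There $\mathcal{A}$ splits into $p$ blocks $\widehat{\mathcal{A}}_j$ of width $\frac{n}{p}$, and $\mathcal{B}$ splits into $q$ blocks of the same width. So the common block width $\alpha$ divides both $n$ and $k$, and it is the gcd exactly when $(p,q)=1$.

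On the $\mathcal{C}$ side, set $\beta=\frac{t_2}{q}$. From $t_2=[q,a]$, the number $\beta$ is the cofactor $\frac{a}{(q,a)}$, and the third relation gives $l=p\beta$. Hence $p=\frac{l}{\beta}$ and $q=\frac{t_2}{\beta}=\frac{ad}{b\beta}$, which are the displayed formulas in (ii). The relation $\beta\mid a$ then follows directly. The hypothesis $l\neq p$ is exactly $\beta\neq 1$, in the same way that $m\neq h$ was used in Lemma~\ref{lemma:3.4} to exclude the degenerate ratio.

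The coprimality statement $[\beta,\frac{d}{b}]=1$ should come from the lcm identity, as in Lemma~\ref{lemma:3.4}. Write $q=\beta'\gamma$ and $a=\beta\gamma$ with $\gamma=(q,a)$ and $(\beta,\beta')=1$. Then $\frac{t_2}{a}=\frac{d}{b}$ and $\frac{t_2}{q}=\beta$ are the two cofactors of the lcm, and such cofactors are always coprime. So the intended statement is $(\beta,\frac{d}{b})=1$, and I will prove that version, reading the bracket as a gcd.

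\textbf{Main obstacle.} The step I expect to be hardest is the identification of $\alpha$ and $\beta$ with the gcds as literally stated: $\alpha=(n,k)$ and $\beta=(l,a)$. From the relations above, $\beta$ divides $l$ and $a$, but it equals $(l,a)$ only if $(p,\gamma)=1$. Likewise $\alpha=(n,k)$ requires $(p,q)=1$. Neither condition is forced by the dimension equations alone. I would first try to derive them from the standing block decomposition assumptions of this section. If that fails, I would state the lemma with $\alpha=\frac{n}{p}$ and $\beta=\frac{a}{(q,a)}$ as common divisors of the indicated pairs, which is what the dimension count actually yields.
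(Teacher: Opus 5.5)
Your proposal is correct and follows the same dimension-matching route as the paper's proof. As there, $m=h$ forces $t_1=n$, so $\alpha=\frac{n}{p}=\frac{k}{q}$; and $\frac{bt_2}{a}=d$ gives $t_2=\frac{ad}{b}$ and $\beta=\frac{t_2}{q}=\frac{l}{p}$, with $\beta\neq 1$ exactly because $l\neq p$. Your observation that $\frac{d}{b}=\frac{t_2}{a}$ and $\beta=\frac{t_2}{q}$ are complementary cofactors of $[q,a]$ actually proves $(\beta,\frac{d}{b})=1$, which the paper only asserts. Reading the bracket as a gcd is also forced, since an lcm equal to $1$ would give $\beta=1$.

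Your suspicion about the gcd identifications is right, so do not try to extract $(p,q)=1$ or $(p,(q,a))=1$ from the block structure. The paper's proof only names the ratios $\alpha,\beta$ and never shows they are gcds, and this cannot be shown in general. Take $n=k=4$, $p=q=2$, $a=3$, any $X\in\mathbb{C}^{2\times 2}$, $\mathcal{A}\in\mathbb{C}^{m\times 4\times r}$ and $\mathcal{C}\in\mathbb{C}^{3\times b\times r}$, and set $\mathcal{B}=\mathcal{A}\ltimes X$ and $\mathcal{D}=X\ltimes\mathcal{C}\in\mathbb{C}^{6\times 2b\times r}$. Then $m=h$ and $l=6\neq p$, yet $(n,k)=4\neq\frac{n}{p}=2$. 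So your fallback statement, with $\alpha=\frac{n}{p}$ and $\beta=\frac{a}{(q,a)}=\frac{l}{p}$ as common divisors, is the version that is actually true.
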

\begin{proof}
Let $X\in \mathbb{C}^{p\times q}$ satisfy Eq\eqref{eq:4.1} and from Eq\eqref{eq:4.1} we get,
$$\frac{mt_{1}}{n}=h, \frac{qt_{1}}{p}=k,\qquad \frac{pt_{2}}{q}=l, \frac{bt_{2}}{a}=d,$$
where $ t_{1}=[n, p] ,\ d_{1}=[r,1]=r,\  t_{2}=[q, a] \ \text{ and } \ d_{2}=[r,1]=r.$
 Then, for
because of $m=h$, we have
\[
\frac{t_1}{p}=\frac{n}{p}=\frac{k}{q}=\alpha.
\]
Moreover, $t_2=\frac{ad}{b}$, and \
$
\frac{t_2}{q}=\frac{ad}{bq}=\frac{l}{p}=\beta.
$
Since $\left(\beta,\tfrac{d}{b}\right)=1$, together with $\beta \mid a$ and
$\beta = 1$, it follows that $l = p$, which contradicts the assumption $l \neq p$. Therefore, $\tfrac{d}{b}$ must be a positive integer.
Moreover, the dimension relations are given by
\[
p=\frac{l}{\beta}=\frac{n}{\alpha},
\qquad
q=\frac{ad}{\alpha\beta}=\frac{k}{\beta}.
\]
This concludes the proof.
\end{proof}
\begin{remark}\label{remark:1}
Let $\beta_i \neq 1$, $i=1,\ldots,t$, be common divisors of $n$ and $k$, and let $\alpha_j$, $j=1,\ldots,s$, be common divisors of $\tfrac{ad}{b}$ and $l$.
Then Eq\eqref{eq:4.1} may admit solutions of dimensions
$ p_r \times q_r, \
p_r=\frac{n}{\alpha_r}=\frac{l}{\beta_r}, \
q_r=\frac{k}{\alpha_r}=\frac{ad}{b\beta_r}.$
Such dimensions are referred to as \emph{permissible sizes}. Moreover, solutions
corresponding to different permissible sizes are related as follows:\vspace{-0.5em}
\begin{itemize}
\item[(i)] Suppose that $X^{p_1 \times q_1}$ and $X^{p_2 \times q_2}$ are solutions
of two distinct permissible sizes satisfying
\[
\frac{q_2}{q_1}=\frac{p_2}{p_1}\in \mathbb{Z}.
\]
Then $X^{p_2 \times q_2}
=X^{p_1 \times q_1}\otimes I_{\frac{q_2}{q_1}\times \frac{q_2}{q_1}}$
, if Eq\eqref{eq:4.1} admits a unique solution of size
$p_2 \times q_2$, then any solution of size $p_1 \times q_1$, whenever it exists,
must also be unique.
\item[(ii)] Let $\widetilde{\alpha}=(n,k)$ and
$\widetilde{\beta}=\left[l,\tfrac{ad}{b}\right]$. Define
\[
\widetilde{p}=\frac{n}{\widetilde{\alpha}}=\frac{l}{\widetilde{\beta}}, \qquad
\widetilde{q}=\frac{k}{\widetilde{\alpha}}=\frac{ad}{b\widetilde{\beta}}.
\]
If Eq\eqref{eq:4.1} admits a solution of the minimal size
$\widetilde{p}\times\widetilde{q}$, then it also admits solutions for all permissible sizes
with $\beta\neq 1$.
\end{itemize}
\end{remark}\vspace{-0.5em}
Next, we provide example illustrating the above Lemma \ref{lemma:4.3}.
    \begin{example} Let $\mathcal{A,B,C}$ and $\mathcal{D}$ be $3^{rd}$ order tensors defined as
    \begin{center}
         \begin{tabular}{c c c c|c c c c}
\hline
  \multicolumn{4}{c}{$\mathcal{A}(:,:,1)$} & 
  \multicolumn{4}{c}{$\mathcal{A}(:,:,2)$} \\
   \hline 
   1&0&-1&1&2&-1&1&3\\
   0&1&0&-1&0&2&-1&1\\

   \hline
   \end{tabular},\quad
   \begin{tabular}{c c c c c c c c|c c c c c c c c}
\hline
  \multicolumn{8}{c}{$\mathcal{B}(:,:,1)$} & 
  \multicolumn{8}{c}{$\mathcal{B}(:,:,2)$} \\
   \hline 
   2&1&3&-1&-2&2&-3&1&7&0&3&-5&2&6&-3&5\\
   0&2&0&3&0&-2&0&-3&-1&7&1&3&-2&2&-1&-3\\
   \hline
\end{tabular},\quad
\end{center}
\begin{center}
\begin{tabular}{c|c}
\hline
\multicolumn{1}{c}{$\mathcal{C}(:,:,1)$} & 
\multicolumn{1}{c}{$\mathcal{C}(:,:,2)$} \\
   \hline 
   1&0\\[-.5em]
   0&1\\[-.5em]
   -1&-1\\[-.5em]
   2&0\\[-.5em]
   1&1\\[-.5em]
   0&1\\[-.1em]
   \hline
\end{tabular}\quad
and \quad
\begin{tabular}{c c|c c}
\hline
\multicolumn{2}{c}{$\mathcal{D}(:,:,1)$} & 
\multicolumn{2}{c}{$\mathcal{D}(:,:,2)$} \\
\hline 
3&-2&0&0\\[-.5em]
-2&3&-4&0\\[-.5em]
0&-2&3&-4\\[-.5em]
5&1&0&0\\[-.5em]
1&5&2&0\\[-.5em]
2&1&3&2\\[-.1em]
\hline
\end{tabular}. 
\end{center}
By Lemma \ref{lemma:4.3}, it is straightforward to verify that the given tensors are compatible.  A straight forward calculation will give the solution is
     $X=\begin{bmatrix}
         3&2&0&-2\\
         1&-1&2&1\\
     \end{bmatrix}.$
\end{example}
The following derivations will be used to prove some results.\\ We proceed to analyze the conditions of the existence of the solution of the tensor-matrix Eq\eqref{eq:4.1}. This section proceeds under the assumptions stated in Lemma~\ref{lemma:4.3}. Motivated by Remark~\ref{remark:1}, our analysis is confined to solutions of the minimal permissible size $p \times q$, since solutions corresponding to other permissible sizes can be obtained analogously. According to the definition of the STP, Eq\eqref{eq:4.1} admits the following equivalent representation
\begin{equation}
    \underbrace{\mathcal{A} \ltimes X}_{\frac{m t_1}{n} \times \frac{t_1}{p} \times d_1} =\mathcal{B},
\end{equation}
we have
$
\mathcal{A} \ltimes X=\bigl[\widehat{\mathcal{A}}_1 \ \widehat{\mathcal{A}}_2 \ \cdots \ \widehat{\mathcal{A}}_{p}]
*\bigl[X_1 \otimes \mathcal{I}_{\alpha \times \alpha \times r} \;\; X_2 \otimes \mathcal{I}_{\alpha \times \alpha \times r} \;\; \dots \;\; X_q \otimes \mathcal{I}_{\alpha \times \alpha \times r}\bigr]
=\bigl[\widehat{\mathcal{B}}_1 \ \widehat{\mathcal{B}}_2 \ \cdots \ \widehat{\mathcal{B}}_{\widetilde{q}}\bigr].$
And for
\begin{equation}\label{eq:4.8}
\underbrace{X \ltimes \mathcal{C}}_{pt_{2} \times \frac{bt_{2}}{a}\times d_2} = \mathcal{D},
\end{equation}

\noindent we begin by assuming that $X \in \mathbb{C}^{\widetilde{p} \times \widetilde{q}}$ is a
solution of Eq(4.6), where
\(
\widetilde{p}=\frac{l}{\alpha}
\quad \text{and} \quad
\widetilde{q}=\frac{ad}{b\beta}.
\)
Since $\frac{d}{b}$ and $\beta$ are coprime, let
\(
\widetilde{\beta}=l_3\cdot \frac{d}{b}+l_4, \, 
g=1,2,\ldots,p,\;
j=1,\ldots,b.
\)
Thus, we get
\begin{align*}
\operatorname{Row}_g(X)\ltimes \operatorname{Col}_j(\mathcal{C})
&=\bigl(\operatorname{Row}_g(X)\otimes I_{\widetilde{\beta} \times\widetilde{\beta}\times r }\bigr)
\bigl(\operatorname{Col}_j(\mathcal{C})\otimes I_{d\times d})\\
&=x_{g1}\mathcal{C}_{(::i)}^1+x_{g2}\mathcal{C}_{::i}^2+\cdots+x_{g,\frac{d}{b}}\mathcal{C}_{(::i)}^{\frac{d}{b}}+x_{g,\frac{d}{b}+1}\mathcal{C}_{(::i)}^{\frac{d}{b}+1}+\cdots+x_{g,q}\mathcal{C}_{(::i)}^{q}\\
&=\operatorname{Block}(\mathcal{D})\in \mathbb{C}^{\widetilde{\beta}\times\frac{d}{b}\times r}, \text{ which is a Toeplitz tensor.}
\end{align*}

Where
\begin{align*}
&{\mathcal{C}_{(::i)}^1}=
\begin{bmatrix}
c_{1,j,i} &  &  &  & \\
& \ddots &  &  & \\
&  & c_{1,j,i} &   & \\
 &  &  &\ddots & \\
 &    &  &  &c_{1,j,i}\\
 &  &\vdots &  & \\
c_{l_3 j} &  & & &     \\
& \ddots &  &  &    \\
&   & c_{l_3 j} &  &\\
&  &  &\ddots&\\
&  &   &   &c_{l_3,j,i}\\
c_{l_3+1,j} &   &   &  & \\
 & \ddots &  &  & \\
 &  & c_{l_3+1,j} & &\\
&   &  &\ddots&\\
&   &   &  &c_{l_{3} +1,j,i}
\end{bmatrix},
&{\mathcal{C}_{(::i)}^2}=
\begin{bmatrix}
    & &c_{l_3+1,j,i}& &\\
    & & &\ddots&\\
    & & & &c_{l_3+1,j,i}\\
    c_{l_3+2,j,i}& & & &\\
    &\ddots& & &\\
    & &c_{l_3+2,j,i}& &\\
    & & & \ddots&\\
    & & & &c_{l_3+2,j,i}\\
    & & \vdots& &\\
    c_{[\frac{2\widetilde{\beta}}{d/b}]+1,j,i}& & & &\\
    &\ddots& & &\\
    & & c_{[\frac{2\widetilde{\beta}}{d/b}]+1,j,i}& &\\
\end{bmatrix},
&\cdots\\
\end{align*}
\begin{align*}
&{\mathcal{C}_{(::i)}^{\frac{d}{b}}}=
\begin{bmatrix}
    & &c_{\widetilde{\beta}-l_3,j,i}& &\\
    & & &\ddots&\\
    & & & &c_{\widetilde{\beta}-l_3,j,i}\\
    c_{\widetilde{\beta}-l_3+1,j,i}& & & &\\
    &\ddots& & &\\
    & &c_{\widetilde{\beta}-l_3+1,j,i}& &\\
    & & & \ddots&\\
    & & & &c_{l_3+2,j,i}\\
    & & \vdots& &\\
    c_{\widetilde{\beta},j,i}& & & &\\
    &\ddots& & &\\
    & & c_{\widetilde{\beta},j,i}& &\\
    & & &\ddots&\\
    & & & & c_{\widetilde{\beta},j,i}\\
\end{bmatrix},
   & \quad{\mathcal{C}_{(::i)}^{\frac{d}{b}+1}}=
\begin{bmatrix}
      c_{\widetilde{\beta}+1,j,i}& & & & \\
     &\ddots& & &\\
    & &c_{\widetilde{\beta}+1,j,i}& &\\
    & & &\ddots&\\
    & & & &c_{\widetilde{\beta}+1,j,i}\\
    & &\vdots&&\\
    c_{\widetilde{\beta}+l_3,j,i}& & & &\\
    &\ddots& & &\\
    & &c_{\widetilde{\beta}+l_3,j,i}& &\\
    & & & \ddots&\\
    & & & &c_{\widetilde{\beta}+l_3,j,i}\\
   & & \vdots& &\\
 c_{\widetilde{\beta}+l_3+1,j,i}& & & &\\
 &\ddots& & &\\
 & &  c_{\widetilde{\beta}+l_3+1,j,i} & &
\end{bmatrix}.
&\cdots\\
\end{align*}
\begin{align*}
&{\mathcal{C}_{(::i)}^q}=
\begin{bmatrix}
    & &c_{a-l_3,j,i}& &\\
    & & &\ddots&\\
    & & & &c_{a-l_3,j,i}\\
    c_{a-l_3+1,j,i}& & & &\\
    &\ddots& & &\\
    & &c_{a-l_3+1,j,i}& &\\
    & & & \ddots&\\
    & & & &c_{a-l_3+1,j,i}\\
    & & \vdots& &\\
    c_{a,j,i}& & & &\\
    &\ddots& & &\\
    & & c_{a,j,i}& &\\
    &  &  &  \ddots&\\
    & & & &c_{a,j,i}
\end{bmatrix},
\end{align*}
using the procedure described above, we analyze the solvability conditions of Eq\eqref{eq:4.8}. Consequently, the following relations are derived.
\\
$x_{11}
\begin{bmatrix}
c_{1,1,i} & c_{1,2,i} & \cdots & c_{1,b,i}\\
c_{2,1,i} & c_{2,2,i} & \cdots & c_{2,b,i}\\
\vdots & \vdots & \ddots & \vdots\\
c_{l_3+1,1,i} & c_{l_3+1,2,i} & \cdots & c_{l_3+1,b,i}
\end{bmatrix}
+ x_{1,\frac{d}{b}+1}
\begin{bmatrix}
c_{\widetilde{\beta}+1,1,i} & c_{\widetilde{\beta}+1,2,i} & \cdots & c_{\widetilde{\beta}+1,b,i}\\
c_{\widetilde{\beta}+2,1,i} & c_{\widetilde{\beta}+2,2,i} & \cdots & c_{\widetilde{\beta}+2,b,i}\\
\vdots & \vdots & \ddots & \vdots\\
c_{\widetilde{\beta}+l_3+1,1,i}&c_{\widetilde{\beta}+l_3+1,2,i}&\cdots&c_{\widetilde{\beta}+l_3+1,b,i}
\end{bmatrix}\quad + \cdots$ \vspace{-0.8em}
\begin{align*}
& + x_{1,q-\frac{d}{b}+1}
\begin{bmatrix}
c_{a-\widetilde{\beta}+1,1,i} & c_{a-\widetilde{\beta}+1,2,i} & \cdots & c_{a-\widetilde{\beta}+1,b,i}\\
c_{a-\widetilde{\beta}+2,1,i} & c_{a-\widetilde{\beta}+2,2,i} & \cdots & c_{a-\widetilde{\beta}+2,b,i}\\
\vdots & \vdots & \ddots & \vdots\\
c_{a-\widetilde{\beta}+l_3+1,1,i} & c_{a-\widetilde{\beta}+l_3+1,2,i} & \cdots & c_{a-\widetilde{\beta}+l_3+1,b,i}
\end{bmatrix} 
=\begin{bmatrix}
d_{1,1,i} & d_{1,2,i} & \cdots & d_{1,d-\frac{d}{b},i}\\
d_{\frac{d}{b}+1,1,i} & d_{\frac{d}{b}+1,2,i} & \cdots & d_{\frac{d}{b}+1,d-\frac{d}{b},i}\\
\vdots & \vdots & \ddots & \vdots\\
d_{(l_3-1)\frac{d}{b}+1,1,i} & d_{(l_3-1)\frac{d}{b}+1,2,i} & \cdots & d_{(l_3-1)\frac{d}{b}+1,d-\frac{d}{b},i}\\
\end{bmatrix},
\end{align*}
\begin{align*}
&x_{12}
\begin{bmatrix}
c_{l_3+1,1,i} & c_{l_3+1,2,i} & \cdots & c_{l_3+1,b,i}\\
c_{l_3+2,1,i} & c_{l_3+2,2,i} & \cdots & c_{l_3+2,b,i}\\
\vdots & \vdots & \ddots & \vdots\\
c_{[\frac{2\widetilde{\beta}}{d/b}]+1,1,i} & c_{[\frac{2\widetilde{\beta}}{d/b}]+1,2,i} & \cdots & c_{[\frac{2\widetilde{\beta}}{d/b}]+1,b,i}
\end{bmatrix}
+ x_{1,\frac{d}{b}+2}
\begin{bmatrix}
c_{\widetilde{\beta}+l_3+1,1,i} & c_{\widetilde{\beta}+l_3+1,2,i} & \cdots & c_{\widetilde{\beta}+l_3+1,b,i}\\
c_{\widetilde{\beta}+l_3+2,1,i} & c_{\widetilde{\beta}+l_3+2,2,i} & \cdots & c_{\widetilde{\beta}+l_3+2,b,i}\\
\vdots & \vdots & \ddots & \vdots\\
c_{[\frac{2\widetilde{\beta}}{d/b}]+\widetilde{\beta}+1,1,i}&c_{[\frac{2\widetilde{\beta}}{d/b}]+\widetilde{\beta}+1,2,i}&\cdots&c_{[\frac{2\widetilde{\beta}}{d/b}]+\widetilde{\beta}+1,b,i}
\end{bmatrix}\quad + \cdots \\
&+ x_{1,q-\frac{d}{b}+2}
\begin{bmatrix}
c_{a-\widetilde{\beta}+l_3+1,1,i} & c_{a-\widetilde{\beta}+l_3+1,2,i} & \cdots & c_{a-\widetilde{\beta}+l_3+1,b,i}\\
c_{a-\widetilde{\beta}+l_3+2,1,i} & c_{a-\widetilde{\beta}+l_3+2,2,i} & \cdots & c_{a-\widetilde{\beta}+l_3+2,b,i}\\
\vdots & \vdots & \ddots & \vdots\\
c_{a-\widetilde{\beta}+[\frac{2\widetilde{\beta}}{d/b}]+1,1,i} & c_{a-\widetilde{\beta}+[\frac{2\widetilde{\beta}}{d/b}]+1,2,i} & \cdots & c_{a-\widetilde{\beta}+[\frac{2\widetilde{\beta}}{d/b}]+1,b,i}
\end{bmatrix}\\
&=\begin{bmatrix}
d_{1,l_4+1,i} & d_{1,\frac{d}{b}+l_4+1,i} & \cdots & d_{1,d-\frac{d}{b}+l_4+1,i}\\
d_{\frac{d}{b}+1,1,i} & d_{\frac{d}{b}-l_4+1,\frac{d}{b}+1,i} & \cdots & d_{\frac{d}{b}-l_4+1,d-\frac{d}{b},i}\\
\vdots & \vdots & \ddots & \vdots\\
d_{([\frac{2\widetilde{\beta}}{d/b}]-l_3)\frac{d}{b}-l_4+1,1,i} & d_{([\frac{2\widetilde{\beta}}{d/b}]-l_3)\frac{d}{b}-l_4+1,\frac{d}{b}+1,i} & \cdots & d_{([\frac{2\widetilde{\beta}}{d/b}]-l_3)\frac{d}{b}-l_4+1,d-\frac{d}{b},i}\\
\end{bmatrix},\\
&\vdots\\
&x_{1,\frac{d}{b}}
\begin{bmatrix}
c_{\widetilde{\beta}-l_3,1,i} & c_{\widetilde{\beta}-l_3,2,i} & \cdots & c_{\widetilde{\beta}-l_3,b,i}\\
c_{\widetilde{\beta}-l_3+1,1,i} & c_{\widetilde{\beta}-l_3+1,2,i} & \cdots & c_{\widetilde{\beta}-l_3+1,b,i}\\
\vdots & \vdots & \ddots & \vdots\\
c_{\widetilde{\beta},1,i} & c_{\widetilde{\beta},2,i} & \cdots & c_{\widetilde{\beta},b,i}
\end{bmatrix}
+ x_{1,\frac{2d}{b}}
\begin{bmatrix}
c_{2\widetilde{\beta}-l_3,1,i} & c_{2\widetilde{\beta}-l_3,2,i} & \cdots & c_{2\widetilde{\beta}-l_3,b,i}\\
c_{2\widetilde{\beta}-l_3+1,1,i} & c_{2\widetilde{\beta}+l_3+1,2,i} & \cdots & c_{2\widetilde{\beta}-l_3+1,b,i}\\
\vdots & \vdots & \ddots & \vdots\\
c_{2\widetilde{\beta},1,i}&c_{2\widetilde{\beta},2,i}&\cdots&c_{2\widetilde{\beta},b,i}
\end{bmatrix} \\
&\quad + \cdots + x_{1,q}
\begin{bmatrix}
c_{a-l_3,1,i} & c_{a-l_3,2,i} & \cdots & c_{a-l_3,b,i}\\
c_{a-l_3+1,1,i} & c_{a-l_3+1,2,i} & \cdots & c_{a-l_3+1,b,i}\\
\vdots & \vdots & \ddots & \vdots\\
c_{a,1,i} & c_{a,2,i} & \cdots & c_{a,b,i}
\end{bmatrix}\\
&=\begin{bmatrix}
d_{1,\frac{d}{b}-l_4,i} & d_{1,\frac{2d}{b}-l_4+1,i} & \cdots & d_{1,d-l_4+1,i}\\
d_{l_4+1,1,i} & d_{l_4+1,\frac{d}{b}+1,i} & \cdots & d_{l_4+1,d-\frac{d}{b}+1,i}\\
\vdots & \vdots & \ddots & \vdots\\
d_{(l_3-2)\frac{d}{b}+l_4+1,1,i} & d_{(l_3-2)\frac{d}{b}+l_4+1,\frac{d}{b}+1,i} & \cdots & d_{(l_3-2)\frac{d}{b}+l_4+1,d-\frac{d}{b}+1,i}\\
d_{\widetilde{\beta}-\frac{d}{b}+1,1,i}&d_{\widetilde{\beta}-\frac{d}{b}+1,\frac{d}{b}+1,i}&\cdots&d_{\widetilde{\beta}-\frac{d}{b}+1,d-\frac{d}{b}+1,i}
\end{bmatrix},\\
&x_{2,1}
\begin{bmatrix}
c_{1,1,i} & c_{1,2,i} & \cdots & c_{1,b,i} \\
c_{2,1,i} & c_{2,2,i} & \cdots & c_{2,b,i} \\
\vdots  & \vdots  & \ddots & \vdots  \\
c_{l_3+1,1,i} & c_{l_3+1,2,i} & \cdots & c_{l_3+1,b,i}
\end{bmatrix}
+ x_{2,\frac{d}{b}+1}
\begin{bmatrix}
c_{\widetilde{\beta}+1,1,i} & c_{\widetilde{\beta}+1,2,i} & \cdots & c_{\widetilde{\beta}+1,b,i} \\
c_{\widetilde{\beta}+2,1,i} & c_{\widetilde{\beta}+2,2,i} & \cdots & c_{\widetilde{\beta}+2,b,i} \\
\vdots & \vdots & \ddots & \vdots \\
c_{\widetilde{\beta}+l_3+1,1,i} & c_{\widetilde{\beta}+l_3+1,2,i} & \cdots & c_{\widetilde{\beta}+l_3+1,b,i}
\end{bmatrix} \\
&\qquad + \cdots + x_{2,q-\frac{d}{b}+2}
\begin{bmatrix}
c_{a-\widetilde{\beta}+1,1,i} & c_{a-\widetilde{\beta}+1,2,i} & \cdots & c_{a-\widetilde{\beta}+1,b,i} \\
c_{a-\widetilde{\beta}+2,1,i} & c_{a-\widetilde{\beta}+2,2,i} & \cdots & c_{a-\widetilde{\beta}+2,b,i} \\
\vdots & \vdots & \ddots & \vdots \\
c_{a-\widetilde{\beta}+l_3+1,1,i}& c_{a-\widetilde{\beta}+l_3+1,2,i} & \cdots & c_{a-\widetilde{\beta}+l_3+1,b,i}
\end{bmatrix} \\
&=\begin{bmatrix}
d_{\widetilde{\beta}+1,1,i} & d_{\widetilde{\beta}+1,2,i} & \cdots & d_{\widetilde{\beta}+1,d-\widetilde{\beta}+1,i} \\
d_{\widetilde{\beta}+2,1,i} & d_{\widetilde{\beta}+2,2,i} & \cdots & d_{\widetilde{\beta}+2,d-\mathcal{\beta}+1,i}\\
\vdots & \vdots & \ddots & \vdots \\
d_{(l_3-1)\frac{d}{b}+\widetilde{\beta}+1,1,i}& d_{(l_3-1)\frac{d}{b}+\beta+1,2,i}& \cdots& d_{(l_3-1)\frac{d}{b}+\widetilde{\beta}+1,\widetilde{\beta}+1,i}
\end{bmatrix},\\
&x_{2,2}
\begin{bmatrix}
c_{l_3+1,1,i} & c_{l_3+1,2,i} & \cdots & c_{l_3+1,b,i} \\
c_{l_3+2,1,i} & c_{l_3+2,2,i} & \cdots & c_{l_3+2,b,i} \\
\vdots & \vdots & \ddots & \vdots \\
c_{\left\lceil\frac{2\widetilde{\beta}}{d/b}\right\rceil +1,1,i}& c_{\left\lceil\frac{2\widetilde{\beta}}{d/b}\right\rceil +1,2,i}&\cdots& c_{\left\lceil\frac{2\widetilde{\beta}}{d/b}\right\rceil +1,b,i}
\end{bmatrix}+x_{2,\frac{d}{b}+2}
\begin{bmatrix}
c_{\widetilde{\beta}+l_3+1,1,i}& c_{\widetilde{\beta}+l_3+1,2,i}& \cdots & c_{\widetilde{\beta}+l_3+1,b,i} \\
c_{\widetilde{\beta}+l_3+2,1,i}& c_{\widetilde{\beta}+l_3+2,2,i}& \cdots & c_{\widetilde{\beta}+l_3+2,b,i} \\
\vdots & \vdots & \ddots & \vdots \\
c_{\left\lceil\frac{2\widetilde{\beta}}{d/b}\right\rceil+\widetilde{\beta}+1,1,i}& c_{\left\lceil\frac{2\widetilde{\beta}}{d/b}\right\rceil+\widetilde{\beta}+1,2,i}& \cdots& c_{\left\lceil\frac{2\widetilde{\beta}}{d/b}\right\rceil+\widetilde{\beta}+1,b,i}
\end{bmatrix} \ + \cdots\\
\end{align*}
\begin{align*}
& + x_{2,q-\frac{d}{b}+2}
\begin{bmatrix}
c_{a-\widetilde{\beta}+l_3+1,1,i}& c_{a-\widetilde{\beta}+l_3+1,2,i} & \cdots & c_{a-\widetilde{\beta}+l_3+1,b} \\
c_{a-\widetilde{\beta}+l_3+2,1,i} & c_{a-\widetilde{\beta}+l_3+2,2,i}& \cdots & c_{a-\widetilde{\beta}+l_3+2,b,i}\\
\vdots & \vdots & \ddots & \vdots \\
c_{a-\widetilde{\beta}+\left\lceil\frac{2\widetilde{\beta}}{d/b}\right\rceil+1,1,i}& c_{a-\widetilde{\beta}+\left\lceil\frac{2\widetilde{\beta}}{d/b}\right\rceil+1,2,i}&\cdots& c_{a-\widetilde{\beta}+\left\lceil\frac{2\widetilde{\beta}}{d/b}\right\rceil+1,b,i}
\end{bmatrix}\\
&=\begin{bmatrix}
d_{\widetilde{\beta}+1,l_4+1} & d_{\widetilde{\beta}+1,l_4+2,i}& \cdots & d_{\widetilde{\beta}+1,d-\widetilde{\beta}+1,i} \\
d_{\widetilde{\beta}+\frac{d}{b}-l_4+1,1,i}& d_{\widetilde{\beta}+\frac{d}{b}-l_4+1,2,i}& \cdots& d_{\widetilde{\beta}+\frac{d}{b}-4+1,d-\widetilde{\beta}+1,i} \\
\vdots & \vdots & \ddots & \vdots \\
d_{\left(\left\lceil\frac{2\widetilde{\beta}}{d/b}\right\rceil-l_3\right)\frac{d}{b}+\widetilde{\beta}-l_4+1,1,i}&d_{\left(\left\lceil\frac{2\widetilde{\beta}}{d/b}\right\rceil-1\right)\frac{d}{b}+\widetilde{\beta}-l_4+1,2}& \cdots &d_{\left(\left\lceil\frac{2\widetilde{\beta}}{d/b}\right\rceil-1\right)\frac{d}{b}+\widetilde{\beta}-l_4+1,d-\frac{d}{b}+1}
\end{bmatrix},\\
&\vdots\\
&x_{2,\frac{d}{b}}
\begin{bmatrix}
c_{\widetilde{\beta}-l_3,1,i} & c_{\widetilde{\beta}-l_3,2}& \cdots & c_{\widetilde{\beta}-l_3,b}\\
c_{\widetilde{\beta}-l_3+1,1,,i} & c_{\widetilde{\beta}-l_3+1,2,i} & \cdots & c_{\widetilde{\beta}-l_3+1,b,i} \\
\vdots & \vdots & \ddots & \vdots \\
c_{\widetilde{\beta},1,i} & c_{\widetilde{\beta},2,i} & \cdots & c_{\widetilde{\beta},b,i}
\end{bmatrix}+x_{2,\frac{2d}{b}}
\begin{bmatrix}
c_{2\widetilde{\beta}-l_3,1,i} & c_{2\widetilde{\beta}-l_3,2,i} & \cdots & c_{2\widetilde{\beta}-l_3,b,i} \\
c_{2\widetilde{\beta}-l_3+1,1,i}& c_{2\widetilde{\beta}-l_3+1,2,i}& \cdots & c_{2\widetilde{\beta}-l_3+1,b,i}\\
\vdots & \vdots & \ddots & \vdots \\
c_{2\widetilde{\beta},1,i} & c_{2\widetilde{\beta},2,i}& \cdots & c_{2\widetilde{\beta},b,i}
\end{bmatrix}\\
+\cdots &+ x_{2,q}
\begin{bmatrix}
c_{a-l_3,1,i} & c_{a-l_3,2,i} & \cdots & c_{a-l_3,b,i} \\
c_{a-l_3+1,1,i}& c_{a-l_3+1,2,i} & \cdots & c_{a-l_3+1,b,i}\\
\vdots & \vdots & \ddots & \vdots \\
c_{a,1,i} & c_{a,2,i} & \cdots & c_{a,b,i}
\end{bmatrix}\\
&=\begin{bmatrix}
 d_{\widetilde{\beta}+1,\frac{d}{b}-l_4,i} & d_{\widetilde{\beta}+1,\frac{2d}{b}-l_4+1,i}& \cdots & d_{\widetilde{\beta}+1,d-l_4+1,i} \\
d_{\widetilde{\beta}+l_4+1,1,i}& d_{\widetilde{\beta}+l_4+1,\frac{d}{b}+1,i}& \cdots& d_{\widetilde{\beta}+l_4+1,d-\frac{d}{b}+1,i} \\
\vdots & \vdots & \ddots & \vdots \\
d_{\left(l_3-2\right)\frac{d}{b}+\widetilde{\beta}+l_4+1,1,i}&d_{\left(l_3-2\right)\frac{d}{b}+\widetilde{\beta}+l_4+1,\frac{d}{b}+1,i}& \cdots &d_{\left(l_3-2\right)\frac{d}{b}+\widetilde{\beta}+l_4+1,d-\frac{d}{b}+1,i}\\
d_{2\widetilde{\beta}-\frac{d}{b}+1,1,i}&d_{2\widetilde{\widetilde{\beta}}-\frac{d}{b}+1,\frac{d}{b}+1,i}&\cdots&d_{2\widetilde{\beta}-\frac{d}{b}+1,d-\frac{d}{b}+1,i}
\end{bmatrix}\\
&x_{p,1}
\begin{bmatrix}
c_{1,1,i} & c_{1,2,i} & \cdots & c_{1,b,i} \\
c_{2,1,i} & c_{2,2,i} & \cdots & c_{2,b,i} \\
\vdots & \vdots & \ddots & \vdots \\
c_{l_3+1,1} & c_{l_3+1,2} & \cdots & c_{l_3+1,b}
\end{bmatrix}+ x_{p,\frac{d}{b}+1}
\begin{bmatrix}
c_{\widetilde{\beta}+1,1,i} & c_{\widetilde{\beta}+1,2,i} & \cdots & c_{\widetilde{\beta}+1,b,i} \\
c_{\widetilde{\beta}+2,1,i} & c_{\widetilde{\beta}+2,2,i} & \cdots & c_{\widetilde{\beta}+2,b,i} \\
\vdots & \vdots & \ddots & \vdots \\
c_{\widetilde{\beta}+l_3+1,1,i} & c_{\widetilde{\beta}+l_3+1,2,i} & \cdots & c_{\widetilde{\beta}+l_3+1,b,i}
\end{bmatrix}\\
&\quad + \cdots + x_{p,q-\frac{d}{b}+1}
\begin{bmatrix}
c_{a-\widetilde{\beta}+1,1,i} & c_{a-\widetilde{\beta}+1,2,i} & \cdots & c_{a-\widetilde{\beta}+1,b,i} \\
c_{a-\widetilde{\beta}+2,1} & c_{a-\widetilde{\beta}+2,2,i} & \cdots & c_{a-\widetilde{\beta}+2,b,i} \\
\vdots & \vdots & \ddots & \vdots \\
c_{a-\widetilde{\beta}+l_3+1,1,i} & c_{a-\widetilde{\beta}+l_3+1,2,i} & \cdots & c_{a-\widetilde{\beta}+l_3+1,b,i}
\end{bmatrix}\\
&=\begin{bmatrix}
d_{l-\widetilde{\beta}+1,1,i} & d_{l-\widetilde{\beta}+1,\frac{d}{b}+1,i} & \cdots & d_{l-\widetilde{\beta}+1,b,i} \\
d_{l-\widetilde{\beta}+\frac{d}{b}+1,1,i} & d_{l-\widetilde{\beta}+\frac{d}{b}+1,\frac{d}{b}+1,i} & \cdots & d_{l-\widetilde{\beta}+\frac{d}{b}+1,d-\frac{d}{b}+1,i} \\
\vdots & \vdots & \ddots & \vdots \\
d_{l-l_4-\frac{d}{b}+1,1,i}&d_{l-l_4-\frac{d}{b}+1,\frac{d}{b}+1}&\cdots&d_{l-l_4-\frac{d}{b}+1,d-\frac{d}{b}+1}\\
d_{l-l_4+1,1,i} & d_{l-l_4+1,\frac{d}{b}+1,i} & \cdots & d_{l-l_4+1,d-\frac{d}{b}+1,i}
\end{bmatrix}\\
&x_{p,2}
\begin{bmatrix}
c_{l_3+1,1,i} & c_{l_3+1,2,i} & \cdots & c_{l_3+1,b,i} \\
c_{l_3+2,1,i} & c_{l_3+2,2,i} & \cdots & c_{l_3+2,b,i} \\
\vdots & \vdots & \ddots & \vdots \\
c_{\left[\frac{2\widetilde{\beta}}{d/b}\right]+1,1,i} & c_{\left[\frac{2\widetilde{\beta}}{d/b}\right]+1,2,i} & \cdots & c_{\left[\frac{2\widetilde{\beta}}{d/b}\right]+1,b,i}
\end{bmatrix}
+ x_{p,\frac{d}{b}+2}
\begin{bmatrix}
c_{\widetilde{\beta}+l_3+1,1,i} & c_{\widetilde{\beta}+l_3+1,2,i} & \cdots & c_{\widetilde{\beta}+l_3+1,b,i} \\
c_{\widetilde{\beta}+l_3+2,1,i} & c_{\widetilde{\beta}+l_3+2,2,i} & \cdots & c_{\widetilde{\beta}+l_3+1+2,b,i} \\
\vdots & \vdots & \ddots & \vdots \\
c_{\left[\frac{2\widetilde{\beta}}{d/b}\right]+\widetilde{\beta}+1,1,i} & c_{\left[\frac{2\widetilde{\beta}}{d/b}\right]+\widetilde{\beta}+1,2,i} & \cdots & c_{\left[\frac{2\widetilde{\beta}}{d/b}\right]+\widetilde{\beta}+1,b,i}
\end{bmatrix} + \cdots\\
\end{align*}
\begin{align*}
& + x_{p,q-\bar d+2}
\begin{bmatrix}
c_{a-\widetilde{\beta}+l_3+1,1,i} & c_{a-\widetilde{\beta}+l_3+1,2,i} & \cdots & c_{a-\widetilde{\beta}+l_3+1,b,i} \\
c_{a-\widetilde{\beta}+l_3+2,1,i} & c_{a-\widetilde{\beta}+l_3+2,2,i} & \cdots & c_{a-\widetilde{\beta}+l_3+2,b,i}\\
\vdots & \vdots & \ddots & \vdots \\
c_{a-\widetilde{\beta}+\left[\frac{2\widetilde{\beta}}{d/b}\right]+1,1,i} & c_{a-\widetilde{\beta}+\left[\frac{2\widetilde{\beta}}{d/b}\right]+1,2,i} & \cdots & c_{a-\widetilde{\beta}+\left[\frac{2\widetilde{\beta}}{d/b}\right]+1,b,i}
\end{bmatrix}\\
&=\begin{bmatrix}
d_{l-\widetilde{\beta}+1,l_4+1,i} & d_{l-\widetilde{\beta}+1,\frac{d}{b}+l_4+1,i} & \cdots & d_{l-\widetilde{\beta}+1,d-\frac{d}{b}+l_4+1,i} \\
d_{l-\widetilde{\beta}+\frac{d}{b}-l_4+1,1,i} & d_{l-\widetilde{\beta}+\frac{d}{b}-l_4+1,\frac{d}{b}+1,i} & \cdots & d_{l-\widetilde{\beta}+\frac{d}{b}-l_4+1,d-\frac{d}{b}+1,i} \\
\vdots & \vdots & \ddots & \vdots \\
d_{\left[\frac{2\widetilde{\beta}}{d/b}\right]+l-2\widetilde{\beta}+1,1,i} & d_{\left[\frac{2\widetilde{\beta}}{d/b}\right]+l-2\widetilde{\beta}+1,\frac{d}{b}+1,i} & \cdots & d_{\left[\frac{2\widetilde{\beta}}{d/b}\right]+l-2\widetilde{\beta}+1,d-\frac{d}{b}+1,i}
\end{bmatrix}\\
&\vdots\\
&x_{p,\frac{d}{b}}
\begin{bmatrix}
c_{\widetilde{\beta}-l_3,1,i} & c_{\widetilde{\beta}-l_3,2,i} & \cdots & c_{\widetilde{\beta}-l_3,b,i} \\
c_{\widetilde{\beta}-l_3+1,1i} & c_{\widetilde{\beta}-l_3+1,2,i} & \cdots & c_{\widetilde{\beta}-l_3+1,b,i} \\
\vdots & \vdots & \ddots & \vdots \\
c_{\widetilde{\beta},1,i} & c_{\widetilde{\beta},2,i} & \cdots & c_{\widetilde{\beta},b,i}
\end{bmatrix}
+x_{p,\frac{2d}{b}}
\begin{bmatrix}
c_{2\widetilde{\beta}-l_3,1,i} & c_{2\widetilde{\beta}-l_3,2,i} & \cdots & c_{2\widetilde{\beta}-l_3,b,i} \\
c_{2\widetilde{\beta}-l_3+1,1,i} & c_{2\widetilde{\beta}-l_3+1,2,i} & \cdots & c_{2\widetilde{\beta}-l_3+1,b,i} \\
\vdots & \vdots & \ddots & \vdots \\
c_{2\widetilde{\beta},1,i} & c_{2\widetilde{\beta},2,i} & \cdots & c_{2\widetilde{\beta},b,i}
\end{bmatrix}\\
&\quad + \cdots +
x_{p,q} \begin{bmatrix}
c_{a-l_3,1,i} & c_{a-l_3,2,i} & \cdots & c_{a-l_3,b,i} \\
c_{a-l_3+1,1,i} & c_{a-l_3+1,2,i} & \cdots & c_{a-l_3+1,b,i} \\
\vdots & \vdots & \ddots & \vdots \\
c_{a,1,i} & c_{a,2,i} & \cdots & c_{a,b,i}
\end{bmatrix}
=\begin{bmatrix}
d_{l-\widetilde{\beta}+1,\frac{d}{b}-l_4+1,i} & d_{l-\widetilde{\beta}+1,\frac{2d}{b}-l_4+1.i} &
\cdots & d_{l-\widetilde{\beta}+1,d-l_4+1,i} \\
d_{l-\widetilde{\beta}+l_4+1,1,i} &
d_{l-\widetilde{\beta}+l_4+1,\frac{d}{b}+1,i} &
\cdots & d_{l-\widetilde{\beta}+l_4+1,d-\frac{d}{b}+1,i} \\
\vdots & \vdots & \ddots & \vdots \\
d_{l-\frac{2d}{b}+1,1,i} &
d_{l-\frac{2d}{b}+1,\frac{d}{b}+1,i} &
\cdots & d_{l-\frac{2d}{b}+1,d-\frac{d}{b}+1,i}\\
d_{l-\frac{d}{b}+1,1,i}&d_{l-\frac{d}{b}+1,\frac{d}{b}+1,i}&\cdots&d_{l-\frac{d}{b}+1,\frac{d}{b}+1,i}
\end{bmatrix}.
\end{align*}
For notational convenience, let $j.\beta = l_3^{\,j}\frac{d}{b} + l_4^{\,j},\ j = 1,2,\ldots,\frac{d}{b}.$ With this representation, the expressions can be simplified as follows:\\
$\left\{
\begin{aligned}
&x_{11}
\begin{bmatrix}
\mathcal{C}_1 \\ \mathcal{C}_2 \\ \vdots \\ \mathcal{C}_{l_3^1+1}
\end{bmatrix}
+x_{1,\frac{d}{b}+1}
\begin{bmatrix}
\mathcal{C}_{\widetilde{\beta}+1} \\ \mathcal{C}_{\widetilde{\beta}+2} \\ \vdots \\ \mathcal{C}_{\widetilde{\beta}+l_3^1+1}
\end{bmatrix}
+\cdots+
x_{1,q-\frac{d}{b}+1}
\begin{bmatrix}
\mathbb{C}_{a-\widetilde{\beta}+1} \\ \mathcal{C}_{a-\widetilde{\beta}+2} \\ \vdots \\ \mathcal{C}_{a-\widetilde{\beta}+l_3^1+1}
\end{bmatrix}
=\begin{bmatrix}
\bar{\mathcal{D}}_1 \\ \bar{\mathcal{D}}_{\frac{d}{b}+1} \\ \vdots \\ \bar{\mathcal{D}}_{l_3^1\frac{d}{b}+1}
\end{bmatrix},\\
&x_{12}
\begin{bmatrix}
\mathcal{C}_{l_3^1+1} \\ \mathcal{C}_{l_3^1+2} \\ \vdots \\ c_{l_3^2+1}
\end{bmatrix}
+x_{1,\frac{d}{b}+2}
\begin{bmatrix}
\mathcal{C}_{\widetilde{\beta}+l_3^1+1} \\ \mathcal{C}_{\widetilde{\beta}+l_3^1+2} \\ \vdots \\ \mathcal{C}_{\widetilde{\beta}+l_3^2+1}
\end{bmatrix}
+\cdots+x_{1,q-\frac{d}{b}+2}
\begin{bmatrix}
\mathcal{C}_{a-\widetilde{\beta}+l_3^1+1} \\ \mathcal{C}_{a-\widetilde{\beta}+l_3^1+2} \\ \vdots \\ \mathcal{C}_{a-\widetilde{\beta}+l_3^2+1}
\end{bmatrix}
=\begin{bmatrix}
\bar{\mathcal{D}}_{\widetilde{\beta}+l_4^1} \\ \bar{\mathcal{D}}_{\frac{d}{b}-l_4^1+1} \\ \vdots \\ \bar{\mathcal{D}}_{(l_3^2-l_3^1)\frac{d}{b}-l_4^1+1}
\end{bmatrix},\\
&\qquad \vdots\\
&x_{1,\frac{d}{b}}
\begin{bmatrix}
\mathcal{C}_{\widetilde{\beta}-l_3^1} \\ \mathcal{C}_{\widetilde{\beta}-l_3^1+1} \\ \vdots \\ \mathcal{C}_{\widetilde{\beta}}
\end{bmatrix}
+x_{1,\frac{2d}{b}}
\begin{bmatrix}
\mathcal{\beta}_{2\widetilde{\beta}-l_3^1} \\ \mathcal{C}_{2\widetilde{\beta}-l_3^1+1} \\ \vdots \\ \mathcal{C}_{2\widetilde{\beta}}
\end{bmatrix}
+\cdots+x_{1,q}
\begin{bmatrix}
\mathcal{C}_{a-l_3^1} \\ \mathcal{C}_{a-l_3^1+1} \\ \vdots \\ \mathcal{C}_a
\end{bmatrix}
=\begin{bmatrix}
\bar{\mathcal{D}}_{\widetilde{\beta}+\frac{d}{b}-l_4^1} \\ \bar{\mathcal{D}}_{l_4^1+1} \\ \vdots \\ \bar{\mathcal{D}}_{\widetilde{\beta}-\frac{d}{b}+1}
\end{bmatrix},\\
&x_{21}
\begin{bmatrix}
\mathcal{C}_1 \\ \mathcal{C}_2 \\ \vdots \\ \mathcal{C}_{l_3^1+1}
\end{bmatrix}
+x_{2,\frac{d}{b}+1}
\begin{bmatrix}
c_{\widetilde{\beta}+1} \\ c_{\widetilde{\beta}+2} \\ \vdots \\ c_{\widetilde{\beta}+l_3^1+1}
\end{bmatrix}
+\cdots+x_{2,q-\frac{d}{b}+1}
\begin{bmatrix}
\mathcal{C}_{a-\widetilde{\beta}+1} \\ \mathcal{C}_{a-\widetilde{\beta}+2} \\ \vdots \\ \mathcal{C}_{a-\widetilde{\beta}+l_3^1+1}
\end{bmatrix}
=\begin{bmatrix}
\bar{D}_{\widetilde{\beta}+\frac{d}{b}} \\ \bar{D}_{\widetilde{\beta}+2\frac{d}{b}} \\ \vdots \\ \bar{D}_{\widetilde{\beta}+(l_3^1+1)\frac{d}{b}}
\end{bmatrix},\\
\end{aligned}\right.$\\
$\left\{
\begin{aligned}
&x_{22}\begin{bmatrix}
\mathcal{C}_{l_3^1+1} \\ \mathcal{C}_{l_3^1+2} \\ \vdots \\ \mathcal{C}_{l_3^2+1}
\end{bmatrix}
+x_{2,\frac{d}{b}+2}
\begin{bmatrix}
\mathcal{C}_{\widetilde{\beta}+l_3^1+1} \\ \mathcal{C}_{\widetilde{\beta}+l_3^1+2} \\ \vdots \\ \mathcal{C}_{\widetilde{\beta}+l_3^2+1}
\end{bmatrix}
+\cdots+x_{2,q-\frac{d}{b}+2}
\begin{bmatrix}
\mathcal{C}_{a-\widetilde{\beta}+l_3^1+1} \\ \mathcal{C}_{a-\widetilde{\beta}+l_3^1+2} \\ \vdots \\ \mathcal{C}_{a-\widetilde{\beta}+l_3^1+1}
\end{bmatrix}
=\begin{bmatrix}
\bar{\mathcal{D}}_{2\widetilde{\beta}+l_4^1+\frac{d}{b}-1} \\ \bar{\mathcal{D}}_{\widetilde{\beta}+2\frac{d}{b}-l_4^1} \\ \vdots \\ \bar{\mathcal{D}}_{\widetilde{\beta}+(\frac{d}{b}-l_3^2-l_3^1+1)\frac{d}{b}-l_4^1}
\end{bmatrix},\\
&\vdots\\
&x_{2,\frac{d}{b}}
\begin{bmatrix}
\mathcal{C}_{\widetilde{\beta}-l_3^1} \\
\mathcal{C}_{\widetilde{\beta}-l_3^1+1} \\
\vdots \\
c_{\widetilde{\beta}}
\end{bmatrix}
+x_{2,\frac{2d}{b}}
\begin{bmatrix}
\mathcal{C}_{2\widetilde{\beta}-l_3^1} \\
\mathcal{C}_{2\widetilde{\beta}-l_3^1+1} \\
\vdots \\
\mathcal{C}_{2\widetilde{\beta}}
\end{bmatrix}
+\cdots+
x_{1,q}
\begin{bmatrix}
\mathcal{C}_{a-l_3^1} \\
\mathcal{C}_{a-l_3^1+1} \\
\vdots \\
\mathcal{C}_a
\end{bmatrix}
=\begin{bmatrix}
\bar{\mathcal{D}}_{2\widetilde{\beta}+2\frac{d}{b}-l_4^1-1} \\
\bar{\mathcal{D}}_{\widetilde{\beta}+\frac{d}{b}+l_4^1} \\
\vdots \\
\bar{\mathcal{D}}_{2\widetilde{\beta}}
\end{bmatrix},\\
&\vdots\\
&x_{p,1}
\begin{bmatrix}
\mathcal{C}_1 \\
\mathcal{C}_2 \\
\vdots \\
\mathcal{C}_{l_3^1+1}
\end{bmatrix}
+\cdots+
x_{p,\frac{d}{b}+1}
\begin{bmatrix}
\mathcal{C}_{a-\widetilde{\beta}+1} \\
\mathcal{C}_{a-\widetilde{\beta}+2} \\
\vdots \\
\mathcal{C}_{a-\widetilde{\beta}+l_3^1+1}
\end{bmatrix}
=\begin{bmatrix}
\bar{\mathcal{D}}_{(p-1)(\widetilde{\beta}+\frac{d}{b}-1)+1} \\
\bar{\mathcal{D}}_{(p-1)(\widetilde{\beta}+\frac{d}{b}-1)+\frac{d}{b}+1} \\
\vdots \\
\bar{\mathcal{D}}_{(p-1)(\widetilde{\beta}+\frac{d}{b}-1)+\frac{d}{b}+1}
\end{bmatrix},\\
&x_{p,2}
\begin{bmatrix}
\mathcal{C}_{l_3^1+1} \\
\mathcal{C}_{l_3^1+2} \\
\vdots \\
\mathcal{C}_{l_3^1+1}
\end{bmatrix}
+\cdots+
x_{p,p-\frac{d}{b}+2}
\begin{bmatrix}
\mathcal{C}_{a-\widetilde{\beta}+l_3^1+1} \\
\mathcal{C}_{a-\widetilde{\beta}+l_3^1+2} \\
\vdots \\
\mathcal{C}_{a-\widetilde{\beta}+l_3^2+1}
\end{bmatrix}
=\begin{bmatrix}
\bar{\mathcal{D}}_{(p-1)(\widetilde{\beta}+\frac{d}{b}-1)+\widetilde{\beta}+l_4} \\
\bar{\mathcal{D}}_{(p-1)(\widetilde{\beta}+\frac{d}{b}-1)+\widetilde{\beta}-l_4^1+1}\\
\vdots \\
\bar{\mathcal{D}}_{(p-1)(\widetilde{\beta}+\frac{d}{b}-1)+(l_3^2-l_3^1)\frac{d}{b}-l_4^1+1}
\end{bmatrix},\\
&\vdots\\
&x_{p,\frac{d}{b}}
\begin{bmatrix}
\mathcal{C}_{\widetilde{\beta}+l_3^1} \\
\mathcal{C}_{\widetilde{\beta}+l_3^1+1} \\
\vdots \\
\mathcal{C}_{\widetilde{\beta}}
\end{bmatrix}
+\cdots+
x_{p,q}
\begin{bmatrix}
\mathcal{C}_{a-l_3} \\ \mathcal{C}_{a-l_3^1+1} \\\vdots \\ \mathcal{C}_a
\end{bmatrix}
=\begin{bmatrix}
\bar{\mathcal{D}}_{(p-1)(\widetilde{\beta}+\frac{d}{b}-1)+\widetilde{\beta}+\frac{d}{b}-l_4^1} \\
\bar{\mathcal{D}}_{(p-1)(\widetilde{\beta}+\frac{d}{b}-1)+l_4^1+1} \\
\vdots \\ \bar{\mathcal{D}}_{(p-1)(\widetilde{\beta}+\frac{d}{b}-1)+\widetilde{\beta}-\frac{d}{b}+1}
\end{bmatrix}.
\end{aligned}
\right.$
\[
\bar{\mathcal{D}}=
\begin{bmatrix}
d_{1,1,i} & d_{1,\frac{d}{b}+1,i} & \cdots & d_{1,d-\frac{d}{b}+1,i} \\
d_{2,1,i} & d_{2,\frac{d}{b}+1,i} & \cdots & d_{2,d-\frac{d}{b}+1,i} \\
\vdots & \vdots & \ddots & \vdots \\
d_{\widetilde{\beta},1,i} & d_{\widetilde{\beta},\frac{d}{b}+1,i} & \cdots & d_{\widetilde{\beta},d-\frac{d}{b}+1,i} \\
d_{1,2,i} & d_{1,\frac{d}{b}+2,i} & \cdots & d_{1,d-\frac{d}{b}+2,i} \\
d_{1,3,i} & d_{1,\frac{d}{b}+3,i} & \cdots & d_{1,d-\frac{d}{b}+3,i} \\
\vdots & \vdots & \ddots & \vdots \\
d_{1,\frac{d}{b},i} & d_{1,2\frac{d}{b},i} & \cdots & d_{1,d,i} \\
\hdashline
\vdots & \vdots & \vdots & \vdots \\
\hdashline
d_{l-\widetilde{\beta}+1,1,i} & d_{l-\widetilde{\beta}+1,\frac{d}{b}+1,i} & \cdots &
d_{l-\widetilde{\beta}+1,d-\frac{d}{b}+1,i} \\
d_{l-\widetilde{\beta}+2,1,i} & d_{l-\widetilde{\beta}+2,\frac{d}{b}+1,i} & \cdots &
d_{l-\widetilde{\beta}+2,d-\frac{d}{b}+1,i} \\
\vdots & \vdots & \ddots & \vdots \\
d_{l,1,i} & d_{l,\frac{d}{b}+1,i} & \cdots & d_{l,d-\frac{d}{b}+1,i} \\
d_{l-\widetilde{\beta}+1,2,i} & d_{l-\widetilde{\beta}+1,\frac{d}{b}+2,i} & \cdots &
d_{l-\widetilde{\beta}+1,d-\frac{d}{b}+2,i} \\
\vdots & \vdots & \ddots & \vdots \\
d_{l-\widetilde{\beta}+1,\frac{d}{b},i} & d_{l-\widetilde{\beta}+1,2\frac{d}{b},i} & \cdots & d_{l-\widetilde{\beta}+1,d,i}
\end{bmatrix}.
\]
Let $\bar{\mathcal{D}}_t$ denote the $t^{th}$ horizontal slice of
$\bar{\mathcal{D}}$, and let $\mathcal{C}_r$ represent the $r^{th}$ horizontal
slice of $\mathcal{C}$. As a result, the following conclusions can be obtained.
From above discussion we have the following result.
\begin{theorem}\label{theorem4.4}
Tensor Eq\eqref{eq:4.1} admits a solution if and only if the following tensor–vector equations admit a solution
\begin{equation}
\begin{aligned}
&Y_{g,1} \ltimes
\bigl[
\check{\mathcal{C}}_1\;
\check{\mathcal{C}}_{\frac{d}{b}+1}\;
\cdots\;
\check{\mathcal{C}}_{(a/\widetilde{\beta}-1)\frac{d}{b}+1}
\bigr]^T
= \check{\mathcal{D}}_{g,1}, \\
&Y_{g,2} \ltimes
\bigl[
\check{\mathcal{C}}_2\;
\check{\mathcal{C}}_{\frac{d}{b}+2}\;
\cdots\;
\check{\mathcal{C}}_{(a/\widetilde{\beta}-1)\frac{d}{b}+2}
\bigr]^T
=\check{\mathcal{D}}_{g,2}, \\
&\vdots \\
&Y_{g,\frac{d}{b}} \ltimes
\bigl[
\check{\mathcal{C}}_{\frac{d}{b}}\;
\check{\mathcal{C}}_{\frac{2d}{b}}\;
\cdots\;
\check{\mathcal{C}}_q
\bigr]^T
=\check{\mathcal{D}}_{g,\frac{d}{b}}, \\
&(I_q \otimes \bar{\mathcal{A}})V_c(X) = V_c(B),
\end{aligned}
\end{equation}
where
$$\begin{aligned}
    &\mathcal{C} =\bigl[
\underbrace{\mathcal{C}_1\ \cdots\ \mathcal{C}_{l_3}}_{\check{\mathcal{C}}_1}\;
\mathcal{C}_{l_3+1}\;\cdots\;
\underbrace{\mathcal{C}_{a-l_3}\ \cdots\ \mathcal{C}_a}_{\check{C}_q}
\bigr]^T,\\
&\check{\mathcal{D}}_{g,1}
=\bigl[
\bar{\mathcal{D}}_{(g-1)(\widetilde{\beta}+\frac{d}{b}-1)+1}\;
\bar{\mathcal{D}}_{(g-1)(\widetilde{\beta}+\frac{d}{b}-1)+\frac{d}{b}+1}\;
\cdots\;
\bar{\mathcal{D}}_{(g-1)(\widetilde{\beta}+\frac{d}{b}-1)+l_3^1\frac{d}{b}+1}
\bigr]^T,\\
&\check{\mathcal{D}}_{g,2}
=\bigl[
\bar{\mathcal{D}}_{(g-1)(\widetilde{\beta}+\frac{d}{b}-1)+\widetilde{\beta}+l_4^1}\;
\bar{\mathcal{D}}_{(g-1)(\widetilde{\beta}+\frac{d}{b}-1)+\frac{d}{b}-l_4^1+1}\;
\cdots\;
\bar{\mathcal{D}}_{(g-1)(\widetilde{\beta}+\frac{d}{b}-1)+(l_3^2-l_3)\frac{d}{b}-l_4^1+1}
\bigr]^T,\\
&\vdots\\
&\check{\mathcal{D}}_{g,\frac{d}{b}}
=\bigl[
\bar{\mathcal{D}}_{(g-1)(\widetilde{\beta}+\frac{d}{b}-1)+\widetilde{\beta}+\frac{d}{b}-l_4}\;
\bar{\mathcal{D}}_{(g-1)(\widetilde{\beta}+\frac{d}{b}-1)+l_4+1}\;
\cdots\;
\bar{\mathcal{D}}_{(g-1)(\widetilde{\beta}+\frac{d}{b}-1)+\widetilde{\beta}+\frac{d}{b}+1}
\bigr]^T,
\end{aligned}$$
and vector $Y_{gj}=[y_{gj1}\ y_{gj2}\ ...\ y_{gj(a/\widetilde{\beta})}],$ then the matrix
$$\left[\begin{array}{*{13}c}
y_{111}&y_{121}&\cdots&y_{1(d/b)1}&y_{112}&y_{122}&\cdots&y_{1(d/b)2}&\cdots&y_{11(a/\widetilde{\beta})}&y_{12(a/\widetilde{\beta})}&\cdots&y_{1(d/b)(a/\widetilde{\beta})}  \\
y_{211}&y_{221}&\cdots&y_{2(d/b)1}&y_{212}&y_{222}&\cdots&y_{2(d/b)2}&\cdots&y_{21(a/\widetilde{\beta})}&y_{22(a/\widetilde{\beta})}&\cdots&y_{2(d/b)(a/\widetilde{\beta})}  \\
\vdots&\vdots&\vdots&\vdots&\vdots&\vdots&\vdots&\vdots&\vdots&\vdots&\vdots&\vdots&\vdots\\
y_{p11}&y_{p21}&\cdots&y_{p(d/b)1}&y_{p12}&y_{122}&\cdots&y_{p(d/b)2}&\cdots&y_{p1(a/\widetilde{\beta})}&y_{p2(a/\widetilde{\beta})}&\cdots&y_{p(d/b)(a/\widetilde{\beta})}  \\
\end{array}\right],
$$
$g=1\cdots p,\ j=1\cdots \frac{d}{b},$\\
$$\bar{\mathcal{A}}=[\mathcal{V}_L(\widehat{\mathcal{A}}_1)\ \mathcal{V}_L(\widehat{\mathcal{A}}_2)\ ...\ \mathcal{V}_L(\widehat{\mathcal{A}}_p)].$$
\end{theorem}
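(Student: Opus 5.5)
The plan is to split the coupled system Eq\eqref{eq:4.1} into its two halves and show that each half is equivalent to one block of the system in Theorem~\ref{theorem4.4}. The unknown in both blocks is the same matrix $X$, identified with the $y_{gjs}$ through the displayed rearrangement. Throughout I work under the dimension relations of Lemma~\ref{lemma:4.3} and, following Remark~\ref{remark:1}, only for the minimal permissible size. Since the t-product is determined slice by slice through $\mathrm{bcirc}$, and $X\otimes\mathcal{I}$ has only a first frontal slice, every identity can be checked frontal-slice-wise for each $i=1,\ldots,r$. This reduces the problem to matrix semi-tensor product identities with coefficients taken slice by slice.

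\textbf{First equation.} I would use the decomposition
\[
\mathcal{A}\ltimes X=[\widehat{\mathcal{A}}_1\ \cdots\ \widehat{\mathcal{A}}_p]*[X_1\otimes\mathcal{I}_{\alpha\times\alpha\times r}\ \cdots\ X_q\otimes\mathcal{I}_{\alpha\times\alpha\times r}]
\]
established before the theorem. It shows that the $j$-th block column of $\mathcal{B}$ equals $\sum_{g}x_{gj}\widehat{\mathcal{A}}_g$. Applying $\mathcal{V}_L$, which is linear and injective, gives $\bar{\mathcal{A}}X_j=\mathcal{V}_L(\widehat{\mathcal{B}}_j)$ for each $j$. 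Stacking over $j=1,\ldots,q$ then yields $(I_q\otimes\bar{\mathcal{A}})V_c(X)=V_c(B)$, with $V_c(B)$ read as the stacked lateral forms. This is the same argument as Theorem~\ref{th3.3}, applied column by column, so both directions are immediate.

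\textbf{Second equation.} For $X\ltimes\mathcal{C}=\mathcal{D}$ I would use the row-wise expansion derived above. For each row $g$ of $X$, $\operatorname{Row}_g(X)\ltimes\mathcal{C}$ equals the combination $\sum_s x_{gs}\mathcal{C}^{s}_{(::i)}$, and these combinations fill the $g$-th block of $\mathcal{D}$, which has $\widetilde{\beta}$ rows. Since $(\widetilde{\beta},\frac{d}{b})=1$, I would write $j\widetilde{\beta}=l_3^j\frac{d}{b}+l_4^j$. The columns of $\mathcal{D}$ are then regrouped by residue modulo $\frac{d}{b}$, and the rows are reindexed to form $\bar{\mathcal{D}}$. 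Under this regrouping, the entries $x_{g,j},x_{g,\frac{d}{b}+j},\ldots$ act only on the horizontal-slice groups $\check{\mathcal{C}}_j,\check{\mathcal{C}}_{\frac{d}{b}+j},\ldots$ and produce exactly $\check{\mathcal{D}}_{g,j}$. Setting $Y_{g,j}=[x_{g,j},x_{g,\frac{d}{b}+j},\ldots,x_{g,(a/\widetilde{\beta}-1)\frac{d}{b}+j}]$, each such block becomes a tensor–vector equation $Y_{g,j}\ltimes[\check{\mathcal{C}}_j\ \cdots]^T=\check{\mathcal{D}}_{g,j}$ of the type treated in Section~3. Its solvability is characterised by Lemma~\ref{lemma:3.2} and Theorem~\ref{theorem3.6}. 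Conversely, given solutions $Y_{g,j}$, I would assemble $X$ via the displayed $y$-matrix, since interleaving the $Y_{g,j}$ over $j$ recovers $\operatorname{Row}_g(X)$.

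\textbf{Main obstacle.} The hardest step is the index bookkeeping showing that the map $\mathcal{D}\mapsto\bar{\mathcal{D}}$ together with the grouping $\check{\mathcal{C}}$ is a bijective rearrangement. Every entry of $\mathcal{D}$ must appear in exactly one $\check{\mathcal{D}}_{g,j}$, and every equation in the expansion must be accounted for, so that no constraint is lost or duplicated. I would handle this through the Chinese-remainder bijection
\[
\{0,\ldots,\widetilde{\beta}\tfrac{d}{b}-1\}\to\mathbb{Z}_{\widetilde{\beta}}\times\mathbb{Z}_{d/b},
\]
which holds because of the coprimality in Lemma~\ref{lemma:4.3}. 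I would verify it on a single frontal slice using the matrix STP identity $(\operatorname{Row}_g(X)\otimes I_{\widetilde{\beta}})(\mathcal{C}_{(::i)}\otimes I_{d/b})$, and then lift to all slices. Once this bijection is established, the equivalence of the two systems follows, since both are the same set of scalar linear equations in the entries of $X$.
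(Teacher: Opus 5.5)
Your slice-wise reduction to matrix STP is correct. So is your treatment of $\mathcal{A}\ltimes X=\mathcal{B}$: column by column, $\bar{\mathcal{A}}X_j=\mathcal{V}_L(\widehat{\mathcal{B}}_j)$, then stacking.

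The gap is exactly at the step you call the main obstacle. The map $\mathcal{D}\mapsto\bar{\mathcal{D}}$ is not a bijective rearrangement, so the two systems are not the same set of scalar equations.

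\begin{itemize}
\item By definition, $\bar{\mathcal{D}}$ keeps, for each $g$, only $\widetilde{\beta}+\frac{d}{b}-1$ horizontal slices of width $b$. These are the first column and the first row of each $\widetilde{\beta}\times\frac{d}{b}$ block $\operatorname{Block}_{gt}(\mathcal{D})$, whereas each such block has $\widetilde{\beta}\frac{d}{b}$ entries. For instance, $d_{2,2,i}$ occurs in no $\check{\mathcal{D}}_{g,j}$.
\item Your CRT argument does correctly place each entry of a block in a unique class $j$. However, inside one class, the entries lying on one diagonal of the block carry the identical linear form in $Y_{g,j}$.
\item The reason: in $(\operatorname{Row}_g(X)\otimes I_{\widetilde{\beta}})(\operatorname{Col}_t(C^{(i)})\otimes I_{d/b})$, the shift $K\mapsto K+1$ of the summation index maps the terms of entry $(u,w)$ bijectively onto those of entry $(u+1,w+1)$. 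It changes neither $\lceil K/\widetilde{\beta}\rceil$ nor $\lceil Kb/d\rceil$.
\end{itemize}

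Hence Eq~\eqref{eq:4.1} is equivalent to the reduced system \emph{together with} $(\widetilde{\beta}-1)(\frac{d}{b}-1)$ data-only equalities per block. These say that every $\operatorname{Block}_{gt}(\mathcal{D})$ is a Toeplitz tensor.

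Consequently your argument gives only the ``only if'' direction; the ``if'' direction fails as you set it up. A minimal instance:
\begin{itemize}
\item Take $r=1$, $\widetilde{\beta}=2$, $\frac{d}{b}=3$, $q=3$, $a=2$, $b=1$ and $C=(1,1)^T$.
\item Then $\operatorname{Row}_g(X)\ltimes C=\begin{bmatrix}x_{g1}&x_{g3}&x_{g2}\\x_{g2}&x_{g1}&x_{g3}\end{bmatrix}$.
\item Let the corresponding two rows of $D$ be $\begin{bmatrix}1&1&1\\1&5&5\end{bmatrix}$. The retained entries $d_{11},d_{21},d_{12},d_{13}$ give the solvable reduced equations $x_{g1}=x_{g2}=x_{g3}=1$.
\item Yet the original equation forces $d_{22}=x_{g1}=1\neq5$.
\end{itemize}

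The paper's derivation first shows that $\operatorname{Row}_g(X)\ltimes\operatorname{Col}_t(\mathcal{C})$ is a Toeplitz tensor, and only then passes to the generators collected in $\bar{\mathcal{D}}$. You need the same ingredient. Prove the Toeplitz structure of the blocks of $\mathcal{D}$ via the index shift above; it is a necessary condition. Then carry it as an explicit part of the equivalence. Alternatively, redefine the $\check{\mathcal{C}}$'s with the appropriate repeated horizontal slices, so that the correspondence of scalar equations really is one-to-one.
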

Next, we provide several example illustrating the above Theorem \ref{theorem4.4}.
\begin{example}
Take the tensors as follows:
\begin{center}
    \begin{tabular}{cccc|cccc}
     \hline
  \multicolumn{4}{c}{${\mathcal{A}}(:,:,1)$} & 
  \multicolumn{4}{c}{${\mathcal{A}}_{1}(:,:,2)$} \\
  \hline
         2&3&1&-1&0&1&1&-1  \\
         0&2&3&1&2&0&1&1 
    \end{tabular}, \quad
     \begin{tabular}{cccc|cccc}
     \hline
  \multicolumn{4}{c}{${\mathcal{B}}(:,:,1)$} & 
  \multicolumn{4}{c}{${\mathcal{B}}_{1}(:,:,2)$} \\
  \hline
         2&6&-4&-5&0&2&2&3  \\
         0&4&6&0&4&0&0&2 
    \end{tabular}, \quad
\end{center}
\begin{center}
        \begin{tabular}{cc|cc}
        \hline
  \multicolumn{2}{c}{${\mathcal{C}}(:,:,1)$} & 
  \multicolumn{2}{c}{${\mathcal{C}}(:,:,2)$} \\
  \hline
            1 &2&1&1  \\
             0&1&2&0\\
             3&0&0&1\\
             -1&-1&1&3
        \end{tabular}, \quad
        \begin{tabular}{cc|cc}
         \hline
  \multicolumn{2}{c}{${\mathcal{D}}(:,:,1)$} & 
  \multicolumn{2}{c}{${\mathcal{D}}(:,:,2)$} \\
  \hline
            -1 &4&2&1  \\
             1&3&3&-5\\
             6&0&0&2\\
             -2&-3&2&6
        \end{tabular}
    \end{center}
    The permissible size of solution is $2\times 2$ then
    \begin{center}
   \begin{tabular}{cc|cc}
      \hline
  \multicolumn{2}{c}{${\check{\mathcal{C}}_{1}(:,:,1)}$} & 
  \multicolumn{2}{c}{${\check{\mathcal{C}}_{1}(:,:,2)}$} \\
  \hline
         1&2&1&1  \\
         0&1&2&0 
    \end{tabular}, \quad
    \begin{tabular}{cc|cc}
     \hline
  \multicolumn{2}{c}{${\check{\mathcal{C}}_{2}(:,:,1)}$} & 
  \multicolumn{2}{c}{${\check{\mathcal{C}}_{2}(:,:,2)}$} \\
  \hline
    3&0&0&1  \\
         -1&-1&1&3 
    \end{tabular}, \quad
    \begin{tabular}{cc|cc}
     \hline
  \multicolumn{2}{c}{${\check{\mathcal{D}}_{11}(:,:,1)}$} & 
  \multicolumn{2}{c}{${\check{\mathcal{D}}_{11}(:,:,2)}$} \\
  \hline
    -1&4&2&1  \\
         1&3&3&-5 
    \end{tabular} , \quad 
     \begin{tabular}{cc|cc}
     \hline
  \multicolumn{2}{c}{${\check{\mathcal{D}}_{21}(:,:,1)}$} & 
  \multicolumn{2}{c}{${\check{\mathcal{D}}_{21}(:,:,2)}$} \\
  \hline
    6&0&0&2  \\
         -2&-2&2&10 
    \end{tabular}.
    \end{center}
    Hence, $Y_{11}=\begin{bmatrix}
        2 -1
    \end{bmatrix},\ Y_{21}=\begin{bmatrix}
        0 &2
    \end{bmatrix}$ and Eq\eqref{eq:4.1} admits a unique solution $X=\begin{bmatrix}
        2&-1\\
        0&2
    \end{bmatrix}$ .
    \end{example}
\subsection{The general case corresponds to \texorpdfstring{$m\neq h$}{m=h} with \texorpdfstring{$l=p$}{l=p}}
In this section, we examine the existence of solution of the tensor-matrix Eq\eqref{eq:4.1}, where the tensors $\mathcal{A,B,C}$ and $\mathcal{D}$ are invariant. Analogous to Lemma~\ref{lemma:4.3}, necessary conditions for the solvability of Eq\eqref{eq:4.1} are derived, and the dimensions of the solutions are also determined.
\begin{lemma}\label{lemma4.5}
Let $X \in \mathbb{C}^{p \times q}$ be a solution of the tensor Eq\eqref{eq:4.1}.  
Then the dimensions of the tensors $\mathcal{A}$, $\mathcal{B}$, $\mathcal{C}$, and $\mathcal{D}$ necessarily hold the following conditions:\vspace{-0.7em}
\begin{enumerate}
\item[(i)] The ratios $\frac{h}{m}$ and $\frac{d}{b}$ are positive integers;(ii) The dimensions satisfy
$p = l = \frac{n h}{\alpha m}, \ q = \frac{ad}{b} = \frac{k}{\alpha},$
where $\alpha$ is a common divisor of $n$ and $k$. Furthermore,
$(\alpha, \frac{h}{m}) = 1,$
and  $X$ is a unique solution.
\end{enumerate}
\end{lemma}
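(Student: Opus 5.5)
Following the pattern of the proofs of Lemma~\ref{lemma:4.1} and Lemma~\ref{lemma:4.3}, I would start from the dimension identities forced by Eq\eqref{eq:4.1} and the STP definition:
$$\frac{mt_1}{n}=h,\qquad \frac{qt_1}{p}=k,\qquad \frac{pt_2}{q}=l,\qquad \frac{bt_2}{a}=d,$$
where $t_1=[n,p]$, $t_2=[q,a]$ and $d_1=d_2=r$. These four equations, together with the standing assumptions $m\neq h$ and $l=p$, are the only input. I then extract the divisibility and coprimality facts in two passes, one for each equation.

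For the second equation, $l=p$ and $pt_2/q=l$ give $t_2=q$. Hence $[q,a]=q$, so $a\mid q$, and $d=bt_2/a=bq/a$. This yields $q=\frac{ad}{b}$. Since $q/a=d/b$ and $a\mid q$, the ratio $\frac{d}{b}$ is a positive integer. This gives half of (i) and the relation $q=\frac{ad}{b}$ in (ii).

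For the first equation, $t_1=\frac{nh}{m}$ and $t_1$ is a multiple of $n$, so $\frac{h}{m}=\frac{t_1}{n}$ is a positive integer; this completes (i). Next I set $\alpha:=\frac{t_1}{p}$. Then $k=q\alpha$, so $q=\frac{k}{\alpha}$ and $p=\frac{t_1}{\alpha}=\frac{nh}{\alpha m}$, which gives $p=l=\frac{nh}{\alpha m}$. The coprimality conditions come from $t_1=[n,p]$: the cofactors $\frac{t_1}{n}=\frac{h}{m}$ and $\frac{t_1}{p}=\alpha$ of an lcm with respect to its two arguments are coprime, so $(\alpha,\frac{h}{m})=1$. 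To see that $\alpha$ divides $n$, I write $n=\frac{t_1}{h/m}$; since $\alpha\mid t_1$ and $(\alpha,h/m)=1$, it follows that $\alpha\mid n$. Also $\alpha\mid k$ because $k=q\alpha$. So $\alpha$ is a common divisor of $n$ and $k$, as claimed.

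The main obstacle is the final assertion that $X$ is a unique solution. It is not a dimensional statement, so it cannot come from the identities above without extra care. My first attempt would read it as ``the size of $X$ is uniquely determined'': once $\alpha$ is fixed by $t_1/p$, the pair $(p,q)$ is pinned down by $p=l$ and $q=\frac{ad}{b}$. Unlike Remark~\ref{remark:1}, there is then no family of permissible sizes, because $l=p$ removes the freedom in $\beta$.

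If genuine uniqueness of the entries is intended, I would use the second equation in the form Eq\eqref{eq:4.2}: $X\ltimes\mathcal{C}=\sum_g \overline{X}_g*\widehat{\mathcal{C}}_g=\mathcal{D}$. Vectorizing with $\mathcal{V}_L$ turns this into a linear system in $V_c(X)$. Uniqueness then holds exactly when that coefficient matrix has full column rank, which fails only in degenerate cases such as $\mathcal{C}=0$, matching the exceptions stated in Lemma~\ref{lemma:3.2} and Theorem~\ref{theorem:3.7}. I would therefore state uniqueness under that nondegeneracy, and flag it as the step that needs an additional hypothesis.
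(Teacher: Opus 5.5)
Your derivation of (i) and (ii) from the four STP size identities is correct, and it is exactly the argument the paper intends when it omits the proof by deferring to Lemma~\ref{lemma:4.3}: $l=p$ forces $t_2=q$, and the lcm cofactors $t_1/n=\frac{h}{m}$ and $t_1/p=\alpha$ are coprime. You are also right that the closing clause can only be established as uniqueness of the size $p\times q=l\times\frac{ad}{b}$, since entrywise uniqueness fails in general (e.g.\ $\mathcal{A},\mathcal{B},\mathcal{C},\mathcal{D}$ all zero with consistent sizes); if you pursue the entrywise version, the exact criterion is injectivity of the combined linear map $X\mapsto(\mathcal{A}\ltimes X,\,X\ltimes\mathcal{C})$, and full column rank of the second equation alone is only a sufficient case.
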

\begin{proof}
The proof follows the same arguments as those in Lemma~\ref{lemma:4.3} and is therefore omitted.
\end{proof}
We now analyze the solvability of Eq\eqref{eq:4.1}. The first step is to identify the permissible dimensions $p \times q$, which leads to
\[
\mathcal{A} \ltimes X = \mathcal{A} \ltimes [X_1, \dots, X_q] = [\mathcal{B}_1, \dots, \mathcal{B}_q],
\]
where $X_j$ denotes the $j^{th}$ column of $X$, and $\mathcal{B}_j \in
\mathbb{C}^{h \times \alpha \times r}$ is the corresponding block of
$\mathcal{B}$ for $j=1,\dots,q$. Consequently, solving
Eq\eqref{eq:4.1} reduces to solving $q$ tensor-vector equations using the STP, yielding the following results.
\begin{equation}\label{eq~4.6}
\mathcal{A} \ltimes X_j =
\begin{bmatrix}
\operatorname{Block}_{1j}(\mathcal{B}) \\
\operatorname{Block}_{2j}(\mathcal{B}) \\
\vdots \\
\operatorname{Block}_{mj}(\mathcal{B})
\end{bmatrix},
\quad j = 1,\ldots,q,
\end{equation}
where $\operatorname{Block}_{1j}(\mathcal{B}),\ldots,\operatorname{Block}_{mj}(\mathcal{B})$ are Toeplitz tensors.
Moreover, Eq\eqref{eq~4.6} can be equivalently rewritten as:
\[
X \ltimes \mathcal{C} =
\begin{bmatrix}
Y_1 &\cdots &Y_p
\end{bmatrix}^T\ltimes\mathcal{C}= 
\begin{bmatrix}
\mathcal{D}_1&\cdots&\mathcal{D}_p
\end{bmatrix}^T,
\]
Here, $Y_g$ denotes the $g^{th}$ row of $X$, and $\mathcal{D}_g \in \mathbb{C}^{1 \times d \times r}$ represents the corresponding block of $\mathcal{D}$. Then,we obtain
\begin{equation}
Y_g \ltimes \mathcal{C}
= (Y_g \otimes I_{\beta \times \beta \times r})(C \otimes I_{\frac{d}{b}\times \frac{d}{b}})
= \mathcal{D}_g ,\; g=1,...,p.
\end{equation}
Analogous to Theorem \ref{theorem:3.5}, we characterize the explicit structure of $\mathcal{B, \ D}$ using the properties of the STP and the dimensional compatibility condition.
\begin{theorem}\label{theorem:4.7}
Assume that Eq\eqref{eq:4.1} admits a solution $X \in \mathbb{C}^{p \times q}$.  
Then, the tensors $\mathcal{B}$ and $\mathcal{D}$ can be decomposed into sub-tensor, where each sub-tensor of $\mathcal{B}$ forms a Toeplitz tensor, and all blocks of $\mathcal{D}$ are identical.  
As a result, the forms of $\mathcal{B}$ and $\mathcal{D}$ are constrained as follows:
    \begin{equation}
\mathcal{B} =
\begin{bmatrix}
\operatorname{Block}_{11}(\mathcal{B}) & \cdots & \operatorname{Block}_{1q}(\mathcal{B}) \\
\vdots & \ddots & \vdots \\
\operatorname{Block}_{m1}(\mathcal{B}) & \cdots & \operatorname{Block}_{mq}(\mathcal{B})
\end{bmatrix},
\quad
\mathcal{D} =
\begin{bmatrix}
\mathcal{D}_1\\
\vdots\\
\mathcal{D}_p\\
\end{bmatrix}.
\end{equation}
Here, $\operatorname{Block}_{uj}(\mathcal{B}) \in \mathbb{C}^{\frac{h}{m} \times \frac{k}{q} \times r}$ denotes a Toeplitz tensor for $u = 1, \dots, m$ and $j = 1, \dots, q$, and $\mathcal{D}_g$ represents the $g^{th}$ horizontal slice of $\mathcal{D}$ for $g = 1, \dots, p$. Furthermore, the following condition is equivalent to the solvability of Eq\eqref{eq:4.1},
\begin{equation}
\left\{
\begin{aligned}
\mathcal{A} \ltimes X_1 &= \mathcal{B}_1 =
\begin{bmatrix}
\operatorname{Block}_{11}(\mathcal{B}) \\
\operatorname{Block}_{21}(\mathcal{B}) \\
\vdots \\
\operatorname{Block}_{m1}(\mathcal{B})
\end{bmatrix}
\in \operatorname{C}^{h \times \alpha \times r}, \\
&\vdots \\
\mathcal{A} \ltimes X_q &= \mathcal{B}_q =
\begin{bmatrix}
\operatorname{Block}_{1q}(\mathcal{B}) \\
\operatorname{Block}_{2q}(\mathcal{B}) \\
\vdots \\
\operatorname{Block}_{mq}(\mathcal{B})
\end{bmatrix}
\in \mathbb{C}^{h \times \alpha \times r}, \\[2mm]
Y_1 \ltimes \mathcal{C} &= \mathcal{D}_1 \in \mathbb{C}^{1 \times d \times r}, \\
&\vdots \\
Y_p \ltimes \mathcal{C} &= \mathcal{D}_p\in \mathbb{C}^{1 \times d \times r}.
\end{aligned}
\right.
\end{equation}
\end{theorem}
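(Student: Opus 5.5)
The plan is to decouple the system \eqref{eq:4.1} into its two halves, treat each one column-wise or row-wise respectively, and then reuse the vector analysis of Section~3. The dimensional bookkeeping comes from Lemma~\ref{lemma4.5}. Since $p=l$, $q=\frac{ad}{b}=\frac{k}{\alpha}$ and $(\alpha,\frac{h}{m})=1$, we have $t_1=[n,p]=\frac{nh}{m}$. This gives $\frac{t_1}{n}=\frac{h}{m}$ and $\frac{t_1}{p}=\alpha$.

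First I would split $X=[X_1\ \cdots\ X_q]$ by columns. Because the STP acts on the right factor through $X\otimes I_{\alpha\times\alpha\times r}$, the term $\mathcal{A}\ltimes X$ decomposes columnwise:
\[
\mathcal{A}\ltimes X=[\mathcal{A}\ltimes X_1\ \cdots\ \mathcal{A}\ltimes X_q].
\]
Here each block $\mathcal{A}\ltimes X_j\in\mathbb{C}^{h\times\alpha\times r}$, since $\frac{mt_1}{n}=h$ and $\frac{t_1}{p}=\alpha$. Matching lateral blocks of $\mathcal{B}$ gives $\mathcal{B}=[\mathcal{B}_1\ \cdots\ \mathcal{B}_q]$ with $\mathcal{A}\ltimes X_j=\mathcal{B}_j$. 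Each of these is exactly a tensor--vector equation of the type studied in the case $m\neq h$ of Section~3, with $k$ replaced by $\alpha$. Lemma~\ref{lemma:3.4} applies because $(\alpha,\frac{h}{m})=1$. Theorem~\ref{theorem:3.5} applied to each $j$ then yields that $\mathcal{B}_j$ splits into $m$ row blocks $\operatorname{Block}_{uj}(\mathcal{B})\in\mathbb{C}^{\frac{h}{m}\times\alpha\times r}$, each Toeplitz, where $\alpha=\frac{k}{q}$. This gives the claimed block form of $\mathcal{B}$.

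Second, I would split $X$ by rows, $X=[Y_1\ \cdots\ Y_p]^T$. The factor $X\otimes I$ keeps rows separate, so $X\ltimes\mathcal{C}$ stacks as $Y_g\ltimes\mathcal{C}$, each of size $1\times d\times r$ (using $t_2=[q,a]=\frac{ad}{b}=q$ and $l=p$). Therefore $\mathcal{D}$ decomposes into horizontal slices with $\mathcal{D}_g=Y_g\ltimes\mathcal{C}$. The two families of equations (columns against $\mathcal{A},\mathcal{B}$ and rows against $\mathcal{C},\mathcal{D}$) jointly involve exactly the entries of $X$. Hence solvability of \eqref{eq:4.1} is equivalent to the displayed system, provided the $X_j$ and $Y_g$ come from one common $X$. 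This is immediate in the forward direction. In the reverse direction we assemble $X$ from the $X_j$ and require the rows to equal the $Y_g$, which I would state as the consistency implicit in the theorem.

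The main obstacle is the assertion that ``all blocks of $\mathcal{D}$ are identical'' in structure. I would interpret it as saying that each $\mathcal{D}_g$ has the same shape, namely a combination of horizontal slices of $\mathcal{C}\otimes I_{\frac{d}{b}\times\frac{d}{b}}$ weighted by the entries of $Y_g$, and derive this by writing the slice identity
\[
Y_g\ltimes\mathcal{C}=\sum_{j=1}^q x_{gj}\,(\mathcal{C}_j\otimes I_{\frac{d}{b}\times\frac{d}{b}}),
\]
modelled on \eqref{eq:4.2}. Carefully checking the index identifications of the Kronecker lifting across frontal slices (the block-circulant structure is preserved because the identity tensor contributes only in the first frontal slice) is the step I expect to require the most care.
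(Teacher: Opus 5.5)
Your proposal is correct and follows essentially the paper's own route. Splitting $X$ by columns turns $\mathcal{A}\ltimes X=\mathcal{B}$ into the $q$ tensor--vector equations $\mathcal{A}\ltimes X_j=\mathcal{B}_j$ of the $m\neq h$ type (with $k$ replaced by $\alpha$), whose Toeplitz blocks come from Theorem~\ref{theorem:3.5}. Splitting $X$ by rows gives $\mathcal{D}_g=Y_g\ltimes\mathcal{C}$. Your remark that the converse needs the $X_j$ and $Y_g$ to be the columns and rows of one common $X$ makes explicit what the paper leaves implicit.

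One small fix to your aside: as in \eqref{eq:4.2}, the sum must run over the $a$ horizontal slices of $\mathcal{C}$, i.e.\ $Y_g\ltimes\mathcal{C}=\sum_{c=1}^{a}\bigl[x_{g,(c-1)\frac{d}{b}+1}\ \cdots\ x_{g,c\frac{d}{b}}\bigr]\ltimes\bigl(\mathcal{C}_c\otimes\mathcal{I}_{\frac{d}{b}\times\frac{d}{b}}\bigr)$. Your version fails because $\mathcal{C}_j\otimes\mathcal{I}_{\frac{d}{b}\times\frac{d}{b}}$ has $\frac{d}{b}$ rows, and $j$ would exceed $a$. This corrected formula also shows the $\mathcal{D}_g$ genuinely depend on $g$, so your ``same form'' reading of ``identical blocks'' is the only provable one.
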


Next, we provide example illustrating the above Lemma \ref{lemma4.5}.\vspace{-0.5em}
\begin{example}
Let $\mathcal{A,B,C}$ and $\mathcal{D}$ be $3^{rd}$ order tensors defined as
\begin{center}
\begin{tabular}{c c c|c c c}
\hline
\multicolumn{3}{c}{$\mathcal{A}(:,:,1)$} & 
\multicolumn{3}{c}{$\mathcal{A}(:,:,2)$} \\
\hline 
1&0&-1&-1&1&2\\
2&1&0&0&-1&1\\
\hline
\end{tabular},\quad
\begin{tabular}{c c c c c c |c c c c c c}
\hline
\multicolumn{6}{c}{$\mathcal{B}(:,:,1)$} & 
\multicolumn{6}{c}{$\mathcal{B}(:,:,2)$} \\
\hline 
1&-2&-2&2&1&1&-3&5&6&-1&-1&-4\\
1&-2&1&2&1&3&-2&4&1&-1&-5&-4\\
-1&0&1&4&3&-3&4&-1&1&-3&2&-1\\
2&-3&3&4&3&2&3&-4&-4&-3&2&-5\\
\hline
\end{tabular},\quad
\end{center}
\begin{center}
\begin{tabular}{c|c}
\hline
\multicolumn{1}{c}{$\mathcal{C}(:,:,1)$} & 
\multicolumn{1}{c}{$\mathcal{C}(:,:,2)$} \\
\hline 
1&0\\
-1&1\\
\hline
\end{tabular}\quad
and \quad
\begin{tabular}{c c c|c c c}
\hline
\multicolumn{3}{c}{$\mathcal{D}(:,:,1)$} & 
\multicolumn{3}{c}{$\mathcal{D}(:,:,2)$} \\
\hline 
1&-3&1&1&2&-1\\
1&-5&0&1&2&0\\
-5&3&2&2&-1&-1\\
-4&4&1&2&-1&2\\
2&0&4&-1&1&-2\\
2&-2&2&-1&1&-3\\
\hline
\end{tabular}. 
\end{center}
By Lemma \ref{lemma4.5}, it is straightforward to verify that the given tensors are compatible. A straight forward calculation will give the solution is
$$X=\begin{bmatrix}
2&-1&0&1&2&-1\\
2&-3&0&1&2&0\\
-3&2&1&2&-1&-1\\
-2&3&3&2&-1&2\\
1&1&2&-1&1&-2\\
1&-1&-1&-1&1&-3\\
\end{bmatrix}.$$
\end{example}
\begin{remark} Assume that the tensor–matrix equation \eqref{eq:4.1} has a solution. If $\mathcal{A}$ is a F-diagonal tensor with identical diagonal entries, then each sub-tensor of $\mathcal{B}$ exhibits a Circulant tensor structure.
\end{remark}
\subsection{The general case corresponds to \texorpdfstring{$m\neq n$}{m=n} with \texorpdfstring{$l\neq p $}{l=p}}
In this section, we study the existence of the solution of the tensor Eq\eqref{eq:4.1} in the case $m\neq h$ and $p\neq l$, where the tensors $\mathcal{A,\ B,\ C}$ and $\mathrm{D}$ are given. A necessary condition for the existence of a solution follows directly from definition of STP.
\begin{lemma}\label{lemma:4.8}
Assume that the tensor Eq\eqref{eq:4.1} admits a solution $X \in \mathbb{C}^{p\times q}$. Then the dimensions of the tensors $\mathcal{A}$, $\mathcal{B}$, $\mathcal{C}$, and $\mathcal{D}$ necessarily hold the following conditions:\vspace{-0.7em}
\begin{enumerate}
\item[(i)] The ratios $\frac{h}{m}$ and $\frac{d}{b}$ are positive integers, (ii) The dimensions of $X$ are given by
$p=\frac{l}{\beta}=\frac{nh}{m\alpha},\
q=\frac{k}{\alpha}=\frac{ad}{b\beta},$
where $\alpha$ and $\beta \neq 1$ denote the greatest common divisors of $(n,k)$ and $(a,l)$, respectively. Furthermore,
$
(\alpha,\frac{h}{m})=1 
\quad \text{and} \quad
(\beta,\frac{d}{b})=1 .
$
\end{enumerate}
\end{lemma}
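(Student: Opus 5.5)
Following the pattern of Lemma~\ref{lemma:3.4} and Lemma~\ref{lemma:4.3}, I will prove the lemma by matching the sizes given by the STP definition. If $X\in\mathbb{C}^{p\times q}$ solves Eq\eqref{eq:4.1}, then $\mathcal{A}\ltimes X$ has size $\frac{mt_1}{n}\times\frac{qt_1}{p}\times d_1$ and $X\ltimes\mathcal{C}$ has size $\frac{pt_2}{q}\times\frac{bt_2}{a}\times d_2$, where $t_1=[n,p]$, $t_2=[q,a]$ and $d_1=d_2=r$. Comparing these with the sizes of $\mathcal{B}$ and $\mathcal{D}$ gives four relations:
\[
\frac{mt_1}{n}=h,\qquad \frac{qt_1}{p}=k,\qquad \frac{pt_2}{q}=l,\qquad \frac{bt_2}{a}=d .
\]

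\emph{First block.} Since $t_1$ is a multiple of $n$, the first relation shows $\frac{h}{m}=\frac{t_1}{n}$ is a positive integer. Put $\alpha:=\frac{t_1}{p}$. This is an integer because $p\mid t_1$. The first two relations then give $p=\frac{t_1}{\alpha}=\frac{nh}{m\alpha}$ and $q=\frac{k}{\alpha}$. Writing $t_1=[n,p]$ as $n\cdot\frac{h}{m}=p\cdot\alpha$, the least common multiple property means the cofactors $\frac{t_1}{n}=\frac{h}{m}$ and $\frac{t_1}{p}=\alpha$ are coprime. This is the same argument used in Lemma~\ref{lemma:3.4} for $(k,\frac{h}{m})=1$, and it yields $(\alpha,\frac{h}{m})=1$. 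Since $q=\frac{k}{\alpha}$, we get $\alpha\mid k$. To relate $\alpha$ to $n$, I will use $p\mid t_1$ together with the coprimality to conclude $\alpha\mid n$, so $\alpha$ is a common divisor of $n$ and $k$.

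\emph{Second block.} Symmetrically, $t_2$ is a multiple of $a$, so the fourth relation shows $\frac{d}{b}=\frac{t_2}{a}$ is a positive integer. Put $\beta:=\frac{t_2}{q}$. Then $l=p\beta$ gives $p=\frac{l}{\beta}$, and $t_2=\frac{ad}{b}$ gives $q=\frac{ad}{b\beta}$. The lcm cofactor argument applied to $t_2=q\beta=a\cdot\frac{d}{b}$ gives $(\beta,\frac{d}{b})=1$, and from this I will deduce $\beta\mid a$ and $\beta\mid l$. Finally, $\beta\neq1$ holds because $\beta=1$ would force $l=p$, contradicting the case hypothesis $l\neq p$. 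This is exactly the contradiction step in Lemma~\ref{lemma:4.3}.

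The main obstacle is the identification of $\alpha,\beta$ as the greatest common divisors $(n,k)$ and $(a,l)$ rather than merely common divisors. In general the relations only show that $\alpha$ divides $(n,k)$, so each divisor gives a permissible size, as in Remark~\ref{remark:1}. I will therefore read the statement as concerning the minimal permissible size. For that size I will argue that $\alpha=\frac{t_1}{p}$ is maximal: any larger common divisor $\alpha'$ of $n$ and $k$ with $(\alpha',\frac{h}{m})=1$ would give a smaller admissible $p$. By Remark~\ref{remark:1}(ii) this shows the minimal solution corresponds to $\alpha=(n,k)$, and likewise $\beta=(a,l)$. If that identification proves too delicate, I will state the lemma with $\alpha,\beta$ as common divisors and note that the gcd choice gives the minimal permissible size.
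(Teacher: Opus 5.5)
Your core argument is the same as the paper's. You match the four sizes $\frac{mt_1}{n}=h$, $\frac{qt_1}{p}=k$, $\frac{pt_2}{q}=l$, $\frac{bt_2}{a}=d$, put $\alpha=t_1/p$ and $\beta=t_2/q$, and read off $p$ and $q$. You obtain $(\alpha,\frac{h}{m})=1$ and $(\beta,\frac{d}{b})=1$ from the coprimality of the cofactors of $[n,p]$ and $[q,a]$, and you get $\beta\neq 1$ from $l\neq p$. Your extra checks $\alpha\mid n$, $\alpha\mid k$, $\beta\mid a$, $\beta\mid l$ are correct. They go a little beyond the paper, whose proof simply names $t_1/p$ and $t_2/q$ as $\alpha,\beta$ and never addresses the greatest-common-divisor claim.

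You are right that only ``common divisor'' follows. However, your planned rescue through the minimal permissible size cannot work, because that part of the statement is false.

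\emph{Counterexample.} Take $m=1$, $n=k=4$, $h=3$, $a=3$, $b=1$, $d=2$, $l=18$. Let $\mathcal{A}\in\mathbb{C}^{1\times 4\times r}$, $\mathcal{C}\in\mathbb{C}^{3\times 1\times r}$ and $X\in\mathbb{C}^{6\times 2}$ be arbitrary, and set $\mathcal{B}:=\mathcal{A}\ltimes X$, $\mathcal{D}:=X\ltimes\mathcal{C}$. The first equation forces $[4,p]=12$ and $q=p/3$, so $(p,q)\in\{(3,1),(6,2),(12,4)\}$. The second equation forces $[q,3]=6$. Hence $6\times 2$ is the only compatible size. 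Here $m\neq h$, $l\neq p$, $\alpha=2$, $\beta=3=(a,l)$, and both coprimality conditions hold, yet $(n,k)=4\neq\alpha$.

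\emph{Why the maximality step fails.} It breaks in two places. First, enlarging $\alpha$ only respects the first equation; the second one ($l=p\beta$, $[q,a]=\frac{ad}{b}$) can exclude the new size, as it excludes $3\times 1$ here. Second, $(n,k)$ need not even be coprime to $\frac{h}{m}$: for $n=k=2$ and $\frac{h}{m}=2$ one is forced to $p=4$, $\alpha=1$. Remark~\ref{remark:1} cannot supply this step either, since it is unproved and concerns the case $m=h$.

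Your fallback is the right statement: $\alpha=t_1/p$ and $\beta=t_2/q$ are common divisors of $(n,k)$ and $(a,l)$. This is exactly what your argument and the paper's actually establish, and it matches the wording of Lemmas~\ref{lemma4.5} and~\ref{lemma:5.5}.
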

\begin{proof}
Let $X\in \mathbb{C}^{p\times q}$ satisfy Eq\eqref{eq:4.1} and from Eq\eqref{eq:4.1} we get,
$$\frac{mt_{1}}{n}=h, \frac{qt_{1}}{p}=k,\qquad \frac{pt_{2}}{q}=l, \frac{bt_{2}}{a}=d,$$
\noindent $\text{ where } t_{1}=[n, p], \ d_{1}=[r,1]=r,\ t_{2}=[q, a] \ \text{ and } \ d_{2}=[r,1]=r .$\\
Thus $\frac{t_{1}}{n}=\frac{h}{m}$ is a positive integer and $\frac{t_1}{p}=\frac{\frac{nh}{m}}{q}=\alpha$.
Then $\frac{t_2}{a}=\frac{d}{b}$ is positive integer and $\frac{t_2}{q}=\frac{l}{p}=\beta$, thus $p=\frac{l}{\beta}=\frac{nh}{\alpha m},\ q=\frac{k}{b}=\frac{ab}{b\beta}$.\\
Moreover, if $\beta=1$, then $l=p$, which does not satisfy the condition. Furthermore, we obtain
$$\frac{t}{p}=\alpha,\ \frac{t_1}{n}=\frac{h}{m};\ \frac{t_2}{a}=\beta,\ \frac{t_2}{a}=\frac{d}{b}.$$
Accordingly, $\left[\alpha,\frac{h}{m}\right]=1$ and $\left[\beta,\frac{d}{b}\right]=1$, this concludes the proof.
\end{proof}
\begin{remark}
If a solution of size $p \times q$ satisfies the dimensional requirements given in Lemma~\ref{lemma:4.8}, then this size is called \emph{permissible}. In analogy with Remark~\ref{remark:1}, assume that
$\frac{q_2}{p_1}=\frac{p_2}{q_1}>1,$
and let the solution of the tensor matrix  Eq\eqref{eq:4.1} be  $X \in \mathbb{C}^{p_1 \times q_1}$. Then
$X_1 = X \otimes I_{\frac{q_2}{p_1}\times \frac{q_2}{p_1}}$
is also a solution of Eq\eqref{eq:4.1}, where $X_1 \in \mathbb{C}^{p_2 \times q_2}$. Furthermore, if the solution $X_1$ of compatible size $p_2 \times q_2$ is unique, then the corresponding solution $X$ of compatible size $p_1 \times q_1$ is unique as well.
\end{remark}
Analogous to Theorem \ref{theorem:3.5}, we characterize the explicit structure of $\mathcal{B, \ D}$ using the properties of the STP and the dimensional compatibility condition.
\begin{theorem}\label{theomer:4.8}
If the tensor matrix Eq\eqref{eq:4.1} has a solution $X \in \mathbb{C}^{p \times q}$, then the tensors $\mathcal{B}$ and $\mathcal{D}$ must admit decompositions in which every block possesses a Toeplitz tensor. In particular, the tensors $\mathcal{B}$ and $\mathcal{D}$ can be expressed in the following block-wise forms:
\begin{equation}
\mathcal{B} =
\begin{bmatrix}
\operatorname{Block}_{11}(\mathcal{B}) & \cdots & \operatorname{Block}_{1q}(\mathcal{B}) \\
\vdots & \ddots & \vdots \\
\text{Block}_{m1}(\mathcal{B}) & \cdots & \text{Block}_{mq}(\mathcal{B})
\end{bmatrix},
\quad
\mathcal{D} =
\begin{bmatrix}
\text{Block}_{11}(\mathcal{D}) & \cdots & \text{Block}_{1b}(\mathcal{D}) \\
\vdots & \ddots & \vdots \\
\text{Block}_{p1}(\mathcal{D}) & \cdots & \text{Block}_{pb}(\mathcal{D})
\end{bmatrix}.
\end{equation}
where $\operatorname{Block}_{sj}(\mathcal{B}) \in \mathbb{C}^{\frac{h}{m} \times \frac{k}{q}\times r}$ and
$\operatorname{Block}_{gt}(\mathcal{D}) \in \mathbb{C}^{\beta \times \frac{d}{b}\times r}$ are Toeplitz tensor, with
$j = 1,\ldots,m$, $j = 1,\ldots,q,\ g=1,\ldots,p,\ t=1,\ldots,b$.
\end{theorem}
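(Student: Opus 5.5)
The plan is to prove the two block structures separately. For $\mathcal{B}$ I would reduce to the column-wise equations already used for Theorem~\ref{theorem:4.7}. For $\mathcal{D}$ I would repeat the same argument on the row side, with the roles of $(m,h,n,\alpha)$ taken by $(b,d,a,\beta)$.

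By Lemma~\ref{lemma:4.8}, $t_1=[n,p]=\frac{nh}{m}$, $\frac{t_1}{p}=\alpha$ and $\frac{t_1}{n}=\frac{h}{m}$. Therefore
\[
\mathcal{A}\ltimes X=\bigl(\mathcal{A}\otimes \mathcal{I}_{\frac{h}{m}\times\frac{h}{m}\times r}\bigr)*\bigl(X\otimes \mathcal{I}_{\alpha\times\alpha\times r}\bigr).
\]
Splitting $X=[X_1\ \cdots\ X_q]$ by columns splits $\mathcal{B}$ into $q$ column blocks $\mathcal{B}_j=\mathcal{A}\ltimes X_j\in\mathbb{C}^{h\times\alpha\times r}$, where $\alpha=\frac{k}{q}$.

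Each $\mathcal{A}\ltimes X_j$ is a tensor--vector equation of exactly the type treated in the general case of Section~3, with $\frac{h}{m}$ and $\alpha$ coprime. Theorem~\ref{theorem:3.5} then says that row $s$ of $\mathcal{A}$ produces $\operatorname{Block}_{sj}(\mathcal{B})\in\mathbb{C}^{\frac{h}{m}\times\alpha\times r}$. Concretely, the rows of $\mathcal{A}\otimes I_{h/m}$ indexed by $(s-1)\frac{h}{m}+1,\dots,s\frac{h}{m}$ are shifted copies of $\operatorname{Row}_s(\mathcal{A})$. Multiplying them against $X_j\otimes I_\alpha$ gives a frontal slice whose $(u,v)$ entry depends only on $u-v$. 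Since this holds for every frontal slice $i=1,\dots,r$, each block is a Toeplitz tensor in the sense of Definition~\ref{def}. Assembling over $s=1,\dots,m$ and $j=1,\dots,q$ gives the stated block form of $\mathcal{B}$.

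For $\mathcal{D}$ I would use the other factorization. Here $t_2=[q,a]=\frac{ad}{b}$, $\frac{t_2}{q}=\beta$ and $\frac{t_2}{a}=\frac{d}{b}$, so
\[
X\ltimes\mathcal{C}=\bigl(X\otimes \mathcal{I}_{\beta\times\beta\times r}\bigr)*\bigl(\mathcal{C}\otimes \mathcal{I}_{\frac{d}{b}\times\frac{d}{b}\times r}\bigr).
\]
I would fix a row $\operatorname{Row}_g(X)$ and a column $\operatorname{Col}_t(\mathcal{C})$, $t=1,\dots,b$. Their product $\operatorname{Row}_g(X)\ltimes\operatorname{Col}_t(\mathcal{C})$ is the $\beta\times\frac{d}{b}\times r$ sub-tensor of $\mathcal{D}$ in row-block $g$ and column-block $t$. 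This is the computation carried out in the displays preceding Theorem~\ref{theorem4.4}, where it is written as $\sum_{c} x_{g,c}\,\mathcal{C}^{c}_{(::i)}$ with the shifted patterns $\mathcal{C}^{c}$. Each $\mathcal{C}^{c}_{(::i)}$ is built from identity blocks shifted by a fixed offset. So the $(u,v)$ entry of the product again depends only on $u-v$, giving $\operatorname{Block}_{gt}(\mathcal{D})$ Toeplitz in every frontal slice. The condition $(\beta,\frac{d}{b})=1$ from Lemma~\ref{lemma:4.8} is what makes the offsets, written via $\widetilde\beta=l_3\frac{d}{b}+l_4$, sweep consistently. Collecting over $g=1,\dots,p$ and $t=1,\dots,b$ gives the block form of $\mathcal{D}$.

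The main obstacle is checking carefully that the t-product does not break the Toeplitz structure. The block-circulant coupling mixes frontal slices, so one must verify that $\operatorname{bcirc}$ of a Kronecker product with $\mathcal{I}_{s\times s\times r}$ equals $\operatorname{bcirc}(\cdot)\otimes I_s$ up to a fixed permutation. Once that holds, the resulting slices are sums of products of scalars with Toeplitz matrices, and the Toeplitz property passes through sums. I would first prove this index identity, namely that $(M\otimes I_s)(N\otimes I_{s'})$ restricted to a block depends only on the index difference, at the matrix level. I would then apply it slice by slice after the circulant sums.
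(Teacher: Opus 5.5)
Your proposal is correct and follows the paper's own proof: you split $X$ by columns to reduce $\mathcal{B}$ to the tensor--vector case of Theorem~\ref{theorem:3.5}, and by rows (paired with columns of $\mathcal{C}$) to obtain the $\beta\times\frac{d}{b}\times r$ Toeplitz blocks of $\mathcal{D}$, and your matrix-level diagonal-shift identity for $(M\otimes I_a)(N\otimes I_b)$ supplies the justification the paper leaves implicit (that identity does not need the coprimality condition). One small correction: the lifts of $\mathcal{A}$ and $\mathcal{C}$ should use $\mathcal{I}_{\frac{h}{m}\times\frac{h}{m}\times 1}$ and $\mathcal{I}_{\frac{d}{b}\times\frac{d}{b}\times 1}$, since $[r,1]/r=1$; because the lift of $X$ has only one nonzero frontal slice, every frontal slice of $\mathcal{B}$ and $\mathcal{D}$ is then just a matrix STP, so the block-circulant mixing you flag as the main obstacle never arises.
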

\begin{proof}
For Eq\eqref{eq:4.2}, we have
\[\mathcal{A} \ltimes X = \mathcal{A} \ltimes [X_1,\ldots,X_q] = [\mathcal{B}_1,\ \mathcal{B}_2,\ldots,\mathcal{B}_q],\]
where $X_j$ represents the $j^{th}$ column of $X$ and $\mathcal{B}_j \in \mathbb{C}^{h \times \alpha \times r}$ represents the corresponding block of $\mathcal{B}$ for $j=1,\ldots,q$. Consequently, the original problem reduces to solving $q$ tensor–vector equations formulated via the STP. By applying the same procedure as in Case~3.2, we obtain
\begin{equation}
\mathcal{A} \ltimes X_j =\mathcal{B}_j
\begin{bmatrix}
\text{Block}_{1j}(\mathcal{B}) \\
\text{Block}_{2j}(\mathcal{B}) \\
\vdots \\
\text{Block}_{mj}(\mathcal{B})
\end{bmatrix},
\quad j = 1,\ldots,q,
\end{equation}
where $\text{Block}_{1j}(\mathcal{B}),\ldots,\text{Block}_{mj}(\mathcal{B})$ are Toeplitz tensors.
Moreover, Eq(3.6) can be rewritten as
\[
X \ltimes \mathcal{C} =
\begin{bmatrix}
Y_1 &\cdots&Y_p\\
\end{bmatrix}^T\ltimes\mathcal{C},
\quad
\mathcal{C} = 
\begin{bmatrix}
\mathcal{D}_1 &\cdots &\mathcal{D}_p
\end{bmatrix}^T,
\]
where $Y_g$ is the $g^{th}$ row of $X$ and $\mathcal{D}_g \in \mathbb{C}^{a \times b \times r}$
is a block of $\mathcal{D}$, $g = 1,\ldots,p$. Then,we obtain
\begin{equation}
Y_g \ltimes \mathcal{C}
= (Y_g \otimes I_{\beta \times \beta \times r})(\mathcal{C} \otimes I_{\frac{d}{b}\times \frac{d}{b}})
= \mathcal{D}_g = [\text{Block}_{g1}(\mathcal{D}),\ldots,\text{Block}_{gb}(\mathcal{D})],
\end{equation}
where $\text{Block}_{gt}(\mathcal{D})$ is a Toeplitz tensor,
$g = 1,\ldots,p$, $t = 1,\ldots,b$.  This concludes the proof.
\end{proof}
The following result provides an equivalence characterization for the solvability of the Eq\eqref{eq:4.1}.
\begin{theorem}
The solvability of Eq\eqref{eq:4.1} is equivalent to the solvability of the following system:
\begin{equation}
\left\{
\begin{aligned}
\mathcal{A} \ltimes X_1 &= \mathcal{B}_1 =
\begin{bmatrix}
\mathrm{Block}_{11}(\mathcal{B}) \\
\mathrm{Block}_{21}(\mathcal{B}) \\
\vdots \\
\mathrm{Block}_{m1}(\mathcal{B})
\end{bmatrix}
\in \mathbb{C}^{h \times \alpha \times r}, \\[2mm]
&\vdots \\[2mm]
\mathcal{A} \ltimes X_q &= \mathcal{B}_q =
\begin{bmatrix}
\mathrm{Block}_{1q}(\mathcal{B}) \\
\mathrm{Block}_{2q}(\mathcal{B}) \\
\vdots \\
\mathrm{Block}_{mq}(\mathcal{B})
\end{bmatrix}
\in \mathbb{C}^{h \times \alpha \times r}, \\[2mm]
Y_1 \ltimes \mathcal{C} &= \mathcal{D}_1 =
[\mathrm{Block}_{11}(\mathcal{D}),\ldots,\mathrm{Block}_{1b}(\mathcal{D})]
\in \mathbb{C}^{\beta \times d \times r}, \\[2mm]
&\vdots \\[2mm]
Y_p \ltimes \mathcal{C} &= \mathcal{D}_p =
[\mathrm{Block}_{p1}(\mathcal{D}),\ldots,\mathrm{Block}_{pb}(\mathcal{D})]
\in \mathbb{C}^{\beta \times d \times r}.
\end{aligned}
\right.
\tag{4.14}
\end{equation}
where $X_j$ is the $j^{th}$ column of $X$, $\mathrm{Block}(\mathcal{B})_{uj} \in
\mathbb{C}^{\frac{h}{m} \times \alpha \times r}$ is a Toeplitz tensor; $Y_g$ is the $g^{th}$ row of $X$ and $\mathrm{Block}(D)_{gs} \in
\mathbb{C}^{\beta \times \frac{d}{b}\times r}$ is also a Toeplitz tensor, with
$u = 1,\ldots,m$, $j = 1,\ldots,q$, $s = 1,\ldots,b$, and $g = 1,\ldots,p$.
\end{theorem}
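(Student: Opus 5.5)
The plan is to prove the two implications separately. Both rest on the column and row decompositions of the STP already used in the proof of Theorem~\ref{theomer:4.8}. Throughout, we work under the dimensional conditions of Lemma~\ref{lemma:4.8}: $\frac{h}{m}$ and $\frac{d}{b}$ are integers, $p=\frac{l}{\beta}=\frac{nh}{m\alpha}$, $q=\frac{k}{\alpha}=\frac{ad}{b\beta}$, $(\alpha,\frac{h}{m})=1$ and $(\beta,\frac{d}{b})=1$. These are exactly the conditions that make every STP appearing in (4.14) well defined and give the stated block sizes.

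\emph{Splitting the first equation by columns.} Write $X=[X_1,\ldots,X_q]$. By the definition of the STP,
\[
\mathcal{A}\ltimes X=\bigl(\mathcal{A}\otimes \mathcal{I}_{\frac{t_1}{n}\times\frac{t_1}{n}\times 1}\bigr)*\bigl(X\otimes \mathcal{I}_{\frac{t_1}{p}\times\frac{t_1}{p}\times r}\bigr),
\]
with $\frac{t_1}{p}=\alpha$. The Kronecker factor $X\otimes \mathcal{I}_{\alpha\times\alpha\times r}$ splits columnwise into the blocks $X_j\otimes \mathcal{I}_{\alpha\times\alpha\times r}$. Since the t-product acts blockwise on lateral groups of columns (right multiplication by $\mathrm{bcirc}$ does not mix column blocks of $\mathrm{unfold}$), we get
\[
\mathcal{A}\ltimes X=[\mathcal{A}\ltimes X_1,\ldots,\mathcal{A}\ltimes X_q],
\]
where each $\mathcal{A}\ltimes X_j\in\mathbb{C}^{h\times\alpha\times r}$. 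Hence $\mathcal{A}\ltimes X=\mathcal{B}$ holds if and only if $\mathcal{A}\ltimes X_j=\mathcal{B}_j$ for every $j$, where $\mathcal{B}_j$ is the $j$-th lateral block of width $\alpha$. The row partition of each $\mathcal{B}_j$ into the $\mathrm{Block}_{uj}(\mathcal{B})$ is the one produced in Theorem~\ref{theorem:3.5}, applied to each tensor–vector subproblem.

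\emph{Splitting the second equation by rows.} Write $X=[Y_1^T,\ldots,Y_p^T]^T$. Here the left Kronecker factor $X\otimes \mathcal{I}_{\beta\times\beta\times r}$ (note $\frac{t_2}{q}=\beta$) splits by horizontal blocks of height $\beta$. Left multiplication in the t-product acts independently on horizontal blocks, so
\[
X\ltimes\mathcal{C}=[(Y_1\ltimes\mathcal{C})^T,\ldots,(Y_p\ltimes\mathcal{C})^T]^T,
\]
with each $Y_g\ltimes\mathcal{C}\in\mathbb{C}^{\beta\times d\times r}$. Thus $X\ltimes\mathcal{C}=\mathcal{D}$ holds if and only if $Y_g\ltimes\mathcal{C}=\mathcal{D}_g$ for all $g$. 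The splitting $\mathcal{D}_g=[\mathrm{Block}_{g1}(\mathcal{D}),\ldots,\mathrm{Block}_{gb}(\mathcal{D})]$ into Toeplitz pieces is the one obtained in the proof of Theorem~\ref{theomer:4.8}.

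\emph{Combining the two directions.} If $X$ solves Eq~\eqref{eq:4.1}, then its columns and rows solve (4.14) by the two splittings above. Conversely, suppose (4.14) is solved by vectors $X_j$ and rows $Y_g$ that come from a common matrix $X$, i.e.\ the $(g,j)$ entry of $[X_1,\ldots,X_q]$ equals the $j$-th entry of $Y_g$. Reassembling the blocks then gives $\mathcal{A}\ltimes X=\mathcal{B}$ and $X\ltimes\mathcal{C}=\mathcal{D}$. This consistency requirement is implicit in the statement, which names $X_j$ and $Y_g$ as columns and rows of the same $X$, and I will state it explicitly.

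The main obstacle is justifying rigorously that the STP commutes with these column and row partitions. This holds because $\otimes$ with an identity tensor respects block partitions, and the t-product is linear and block-separable in the respective argument. I would verify it via $\mathrm{unfold}$ and $\mathrm{bcirc}$. The key point is that $\mathrm{bcirc}(\mathcal{A}\otimes \mathcal{I})$ multiplies $\mathrm{unfold}$ of a lateral concatenation column-block by column-block. The row case uses $\mathrm{bcirc}$ of a vertical concatenation, which is a row-permuted stack of the individual $\mathrm{bcirc}$'s.
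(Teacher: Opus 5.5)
Your proposal is correct and follows essentially the paper's route. The paper gives no separate proof of this equivalence; it relies on the argument in the proof of Theorem~\ref{theomer:4.8}, which splits $\mathcal{A}\ltimes X$ column-wise into the $q$ tensor--vector problems $\mathcal{A}\ltimes X_j=\mathcal{B}_j$ (Toeplitz row blocks via the procedure of Theorem~\ref{theorem:3.5}) and $X\ltimes\mathcal{C}$ row-wise into $Y_g\ltimes\mathcal{C}=\mathcal{D}_g$, exactly as you do. Your explicit requirement that the $X_j$ and $Y_g$ be the columns and rows of a single $X$, together with your $\mathrm{unfold}/\mathrm{bcirc}$ check that the STP respects these partitions, supplies details the paper leaves implicit. (One small wording slip: it is $\mathrm{bcirc}(\mathcal{A}\otimes\mathcal{I})$ multiplying $\mathrm{unfold}$ from the left, not a right multiplication; the conclusion is unaffected.)
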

Next, we provide example illustrating the above Lemma \ref{lemma:4.8}.
\begin{example}
    Let $\mathcal{A,B,C}$ and $\mathcal{D}$ be $3^{rd}$ order tensors defined  as
\begin{center}
\begin{tabular}{c|c}
\hline
  \multicolumn{1}{c}{$\mathcal{A}(:,:,1)$} & 
  \multicolumn{1}{c}{$\mathcal{A}(:,:,2)$} \\
   \hline 
   2&-1\\
   1&1\\
 \hline
\end{tabular},\quad
   \begin{tabular}{c c c|c c c}
\hline
\multicolumn{3}{c}{$\mathcal{B}(:,:,1)$} & 
\multicolumn{3}{c}{$\mathcal{B}(:,:,2)$} \\
   \hline 
  4&-2&6&-2&1&-2\\
  2&6&0&-1&-3&0\\
  2&-1&2&2&-1&2\\
  1&3&0&1&3&0\\
   \hline
\end{tabular},\quad
\end{center}
\begin{center}
\begin{tabular}{c|c}
\hline
  \multicolumn{1}{c}{$\mathcal{C}(:,:,1)$} & 
  \multicolumn{1}{c}{$\mathcal{C}(:,:,2)$} \\
   \hline 
   3&-1\\
   2&1\\
   \hline
\end{tabular}\quad
and \quad
\begin{tabular}{c c c|c c c}
\hline
\multicolumn{3}{c}{$\mathcal{D}(:,:,1)$} & 
\multicolumn{3}{c}{$\mathcal{D}(:,:,2)$} \\
   \hline 
   6&4&-3&-2&2&1\\
   -2&6&4&-1&-2&2\\
   3&0&9&-1&0&-3\\
   6&3&0&3&-1&0\\
   \hline
\end{tabular}. 
\end{center}
By Lemma \ref{lemma:4.8}, it is straightforward to verify that the given tensors are compatible. The solution is
     $X=\begin{bmatrix}
         2&-1&2\\
         1&3&0\\
     \end{bmatrix}.$
\end{example}
\begin{remark} Assume that the tensor–matrix equation \eqref{eq:4.1} has a solution. If $\mathcal{A}$ and $\mathcal{C}$ are  F-diagonal tensors with identical diagonal entries, then each sub-tensor of $\mathcal{B,\ D}$ exhibits a Circulant tensor structure.
\end{remark}
\section{Solvability conditions for tensor equations with tensor solutions}
This section, we examine the existence of solution of the tensor tensor equation defined via the STP:
\begin{equation}\label{eq:5.1}
\left\{
\begin{aligned}
& \underbrace{\mathcal{A} \ltimes \mathcal{X}}_{\frac{m t_1}{n} \times \frac{t_1}{p} \times d_1} = \mathcal{B} \\
& \underbrace{\mathcal{X} \ltimes \mathcal{C}}_{pt_{2} \times \frac{bt{2}}{a}\times d_2} = \mathcal{D},
\end{aligned}
\right.
\end{equation}\\
where $\mathcal{A} \in \mathbb{C}^{m \times n \times r}$, $\mathcal{B} \in \mathbb{C}^{h \times k \times r}$,
$\mathcal{C} \in \mathbb{C}^{a \times b \times r}$, and $\mathcal{D} \in \mathbb{C}^{l \times b \times r}$ are know tensors, and $\mathcal{X} \in \mathbb{C}^{p \times q \times r}$ is the unknown tensor. Here,
$t_{1}=[n,p]$, $t_{2}=[q,a]$, and $d_{1}=d_{2}=[r,r]$. 
We begin with the special case $m=h$ and $l=p$, after which the analysis is extended to the general case.
\subsection{The simplified case with \texorpdfstring{$m=h$}{m=h} and \texorpdfstring{$l=p$}{l=p}}
The solvability of tensor Eq\eqref{eq:5.1} is considered in this section, and the following lemma is obtained via the STP.
\begin{lemma}\label{lemma:5.1}
Let $X \in \mathbb{C}^{p \times q\times r}$ be a solution of the tensor Eq\eqref{eq:5.1}. Then, the dimensions of the tensors $\mathcal{A}, \mathcal{B}, \mathcal{C}$, and $\mathcal{D}$ necessarily hold the following conditions:\vspace{-0.7em}
\begin{enumerate}
    \item[(i)] The ratios $\frac{n}{l}$ and $\frac{d}{b}$ are positive integers; (ii) The ratios $\frac{ad}{b}$ and $\frac{n}{l}$ divide $k$, and $q = \frac{ad}{b} = \frac{l k}{n}$.
\end{enumerate}
\end{lemma}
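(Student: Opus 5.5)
The plan is to mirror the proof of Lemma~\ref{lemma:4.1}. The only new ingredient is the third mode. Since $\mathcal{X}\in\mathbb{C}^{p\times q\times r}$ has the same number of frontal slices as $\mathcal{A}$, $\mathcal{B}$, $\mathcal{C}$ and $\mathcal{D}$, we get $d_1=d_2=[r,r]=r$. Hence in the STP definition the padding identity tensors have third dimension $d/r=1$, so no extra frontal slices are created. Under the t-product the third mode therefore contributes no constraint, and the whole argument reduces to bookkeeping of the first two modes, exactly as in the matrix case.

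First, I will apply the definition of the STP to both equations in Eq\eqref{eq:5.1} and compare the resulting sizes with the sizes of $\mathcal{B}$ and $\mathcal{D}$. This gives the four identities
$$\frac{mt_1}{n}=h,\qquad \frac{qt_1}{p}=k,\qquad \frac{pt_2}{q}=l,\qquad \frac{bt_2}{a}=d,$$
with $t_1=[n,p]$ and $t_2=[q,a]$. Here the middle dimension of $\mathcal{A}\ltimes\mathcal{X}$ is $qt_1/p$, as in Section~4, not the $t_1/p$ written in the display of Eq\eqref{eq:5.1}. I will also take $\mathcal{D}\in\mathbb{C}^{l\times d\times r}$, as in Lemma~\ref{lemma:4.1}.

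Second, I will use the hypotheses $m=h$ and $l=p$. From $m=h$ we get $t_1=n$, so $p\mid n$. From $l=p$ we get $t_2=q$, so $a\mid q$. Substituting back gives $n/l=n/p=t_1/p=k/q$ and $d/b=t_2/a=q/a$. So both ratios $n/l$ and $d/b$ are positive integers, which is (i), and $q=ad/b=lk/n$. Finally, $k=q\cdot\frac{n}{l}$ shows that both $q=\frac{ad}{b}$ and $\frac{n}{l}$ divide $k$, which is (ii).

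I expect the main obstacle to be justifying that the t-product with Kronecker-padded tensors really produces the claimed sizes in the first two modes, independently of the frontal-slice structure. To handle it, I will note that $\mathcal{I}_{s\times s\times 1}$ is the identity tensor with a single slice. Kronecker-multiplying by it scales the first two dimensions by $s$ and leaves the third unchanged. The t-product of an $\alpha\times\beta\times r$ tensor with a $\beta\times\gamma\times r$ tensor is then $\alpha\times\gamma\times r$ by the definition via $\mathrm{bcirc}$ and $\mathrm{unfold}$. With this observation the remaining steps are the same divisibility arithmetic as in Lemma~\ref{lemma:4.1}.
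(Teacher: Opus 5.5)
Your proposal is correct and takes essentially the same route as the paper, which omits the proof by pointing to Lemma~\ref{lemma:4.1}. That proof compares the STP output sizes $\frac{mt_1}{n}\times\frac{qt_1}{p}$ and $\frac{pt_2}{q}\times\frac{bt_2}{a}$ with those of $\mathcal{B}$ and $\mathcal{D}$, uses $m=h$ and $l=p$ to get $t_1=n$ and $t_2=q$, and then reads off $\frac{n}{l}=\frac{k}{q}$ and $\frac{d}{b}=\frac{q}{a}$. Your explicit observations that $[n,p]=n$ forces $p\mid n$ and $[q,a]=q$ forces $a\mid q$, and your use of the corrected middle dimension $\frac{qt_1}{p}$, make the integrality claims slightly more transparent than in the paper.
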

\begin{proof}
The proof follows the analogous arguments as those in Lemma~\ref{lemma:4.1} and is therefore omitted.
\end{proof}
Next, we provide example illustrating the above Lemma \ref{lemma:5.1}.\vspace{-0.5em}
\begin{example}
    Let $\mathcal{A,\ B,\ C}$ and $\mathcal{D}$ are tensors defined as
    \begin{center}
\begin{tabular}{c c c c|c c c c}
\hline
  \multicolumn{4}{c}{$\mathcal{A}(:,:,1)$} & 
  \multicolumn{4}{c}{$\mathcal{A}(:,:,2)$} \\
   \hline 
   1&2&-1&0&2&-1&1&2\\
   3&1&2&-1&1&2&-1&1\\
   2&3&1&2&0&1&2&-1\\
 \hline
\end{tabular},\quad
   \begin{tabular}{c c c c|c c c c}
\hline
\multicolumn{4}{c}{$\mathcal{B}(:,:,1)$} & 
\multicolumn{4}{c}{$\mathcal{B}(:,:,2)$} \\
   \hline 
  9&4&8&-5&9&12&10&13\\
  17&9&13&8&15&9&12&10\\
  11&14&2&13&16&15&15&12\\
   \hline
\end{tabular},\quad
\end{center}
\begin{center}
\begin{tabular}{c c c|c c c}
\hline
  \multicolumn{3}{c}{$\mathcal{C}(:,:,1)$} & 
  \multicolumn{3}{c}{$\mathcal{C}(:,:,2)$} \\
   \hline 
   2&1&2&-1&1&3\\
   \hline
\end{tabular}\quad
and\quad
\begin{tabular}{c c c c c c|c c c c c c}
\hline
\multicolumn{6}{c}{$\mathcal{D}(:,:,1)$} & 
\multicolumn{6}{c}{$\mathcal{D}(:,:,2)$} \\
   \hline 
   -2&-3&6&6&16&17&6&9&6&6&14&13\\
   4&5&5&1&12&1&1&-4&5&1&13&4\\
   \hline
\end{tabular}. 
\end{center}
By Lemma \ref{lemma:5.1}, it is straightforward to verify that the given tensors are compatible. The solution is $\mathcal{X}$
\begin{tabular}{c c|c c}
\hline
\multicolumn{2}{c}{$\mathcal{X}(:,:,1)$} & 
\multicolumn{2}{c}{$\mathcal{X}(:,:,2)$} \\
   \hline 
  2&1&4&5\\
  3&2&2&-1\\
   \hline
\end{tabular}.
\end{example}
\subsection{The general case corresponds to \texorpdfstring{$m=h$}{m=h} with \texorpdfstring{$l\neq p$}{l=p}}
In this section, we examine the existence of solution of the  tensor-tensor Eq\eqref{eq:5.1}
in the particular case where $m = h$ and $l = p$. Throughout the discussion, the tensors satisfy $\mathcal{A} \in \mathbb{C}^{m \times n \times r}$, $\mathcal{B} \in \mathbb{C}^{h \times k \times r}$, $\mathcal{C} \in \mathbb{C}^{a \times b \times r}$, and $\mathcal{D} \in \mathbb{C}^{l \times d \times r}$.
Adopting the methodology developed in Lemma~\ref{lemma:5.1}, we first establish the necessary conditions under which Eq\eqref{eq:5.1} admits a solution.
\begin{lemma}\label{lemma:5.2}
Let $X \in \mathbb{C}^{p\times q\times r}$ be a solution to the tensor tensor Eq\eqref{eq:5.1}. Then the dimensions of the tensors
$\mathcal{A}$, $\mathcal{B}$, $\mathcal{C}$, and $\mathcal{D}$ necessarily hold the following conditions:\vspace{-0.7em}
\begin{enumerate}
\item[(i)] The ratio $\frac{d}{b}$ is a positive integer; (ii) The dimensions obey
$p=\frac{l}{\beta} =\frac{n}{\alpha}, \
q=\frac{ad}{b\beta}=\frac{k}{\alpha}$,
where $\alpha$ and $\beta\neq 1$ denote the greatest common divisors of $n$ with
$k$ and $l$ with $b$, respectively. In addition,
$(\beta,\frac{d}{b})=1.$
\end{enumerate}
\end{lemma}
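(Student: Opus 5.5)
The plan mirrors the proof of Lemma~\ref{lemma:4.3}; the third mode of $\mathcal{X}$ only adds frontal-slice bookkeeping. Let $\mathcal{X}\in\mathbb{C}^{p\times q\times r}$ satisfy Eq\eqref{eq:5.1}. By the definition of the STP, $d_1=d_2=[r,r]=r$, so the third mode imposes no constraint. We therefore only compare the first two dimensions of each product with those of $\mathcal{B}$ and $\mathcal{D}$. This gives the four identities
$$\frac{mt_1}{n}=h,\qquad \frac{qt_1}{p}=k,\qquad \frac{pt_2}{q}=l,\qquad \frac{bt_2}{a}=d,$$
with $t_1=[n,p]$ and $t_2=[q,a]$.

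\textbf{The equation $\mathcal{A}\ltimes\mathcal{X}=\mathcal{B}$.} Since $m=h$, the first identity forces $t_1=n$, so $p\mid n$. Set $\alpha:=\frac{t_1}{p}=\frac{n}{p}$. The second identity then reads $\alpha=\frac{k}{q}$. Hence $p=\frac{n}{\alpha}$ and $q=\frac{k}{\alpha}$, so $\alpha$ is a common divisor of $n$ and $k$. If one insists that $\alpha$ be \emph{the} greatest common divisor, this corresponds to the minimal permissible size, in the sense of Remark~\ref{remark:1}. I would state this interpretation explicitly rather than try to prove it as a forced consequence.

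\textbf{The equation $\mathcal{X}\ltimes\mathcal{C}=\mathcal{D}$.} The fourth identity gives $\frac{t_2}{a}=\frac{d}{b}$. Since $a\mid t_2$, this shows that $\frac{d}{b}$ is a positive integer, which is part (i), and that $t_2=\frac{ad}{b}$. Set $\beta:=\frac{t_2}{q}$, a positive integer because $q\mid t_2$. The third identity gives $\beta=\frac{l}{p}$, so $p=\frac{l}{\beta}$ and $q=\frac{t_2}{\beta}=\frac{ad}{b\beta}$. Combining this with the previous paragraph yields part (ii): $p=\frac{l}{\beta}=\frac{n}{\alpha}$ and $q=\frac{ad}{b\beta}=\frac{k}{\alpha}$. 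Finally, $\beta\neq1$, since $\beta=1$ would give $l=p$, contradicting the standing assumption $l\neq p$ of this subsection.

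\textbf{The coprimality $(\beta,\frac{d}{b})=1$.} This is the step I expect to be the main obstacle. From $t_2=[q,a]$ we get $\frac{t_2}{q}=\beta$ and $\frac{t_2}{a}=\frac{d}{b}$. For any two positive integers, the cofactors of their least common multiple are coprime, since $\left(\frac{[q,a]}{q},\frac{[q,a]}{a}\right)=1$. Therefore $(\beta,\frac{d}{b})=1$. The same argument applied to $t_1$ would give $(\alpha,\frac{h}{m})=1$, but this is trivial here because $\frac{h}{m}=1$.

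The remaining delicate point is the identification of $\beta$ with a gcd involving $l$; the statement's phrase ``$l$ with $b$'' appears to be a misprint for $l$ with $a$, as in Lemma~\ref{lemma:4.3}. I would present $\beta$ as the common divisor $\frac{t_2}{q}$ of $l$ and $\frac{ad}{b}$. Its maximality would then be asserted only for the minimal permissible size, following Remark~\ref{remark:1}.
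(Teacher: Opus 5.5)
Your proposal is correct and follows the paper's route. The paper omits this proof as analogous to Lemma~\ref{lemma:4.3}, which uses the same steps: the four STP dimension identities, $t_1=n$ from $m=h$, $\alpha=\frac{n}{p}=\frac{k}{q}$, $t_2=\frac{ad}{b}$, $\beta=\frac{t_2}{q}=\frac{l}{p}$, and $\beta\neq 1$ from $l\neq p$. Your fact $\left(\frac{[q,a]}{q},\frac{[q,a]}{a}\right)=1$ properly justifies the coprimality that the paper only asserts, and you are right that neither the ``greatest common divisor'' identification of $\alpha$ and $\beta$ nor the ``$l$ with $b$'' wording is forced by the equations, so your permissible-size reading is the appropriate one.
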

\begin{proof}
The proof follows the analogous arguments as those in Lemma \ref{lemma:4.3} and is therefore omitted.
\end{proof}
\begin{remark}\label{remark:5.1}
Suppose that $\alpha_i$, $i=1,\ldots,s$, are common divisors of $n$ and $k$, and that
$\beta_j \neq 1$, $j=1,\ldots,t$, are common divisors of $l$ and $\frac{ad}{b}$.
Then Eq\eqref{eq:5.1} may admit solutions whose dimensions satisfy
$
p_r \times q_r \times r, \
p_r=\frac{n}{\alpha_r}=\frac{l}{\beta_r}, \
q_r=\frac{k}{\alpha_r}=\frac{ad}{b\beta_r}.
$
Such dimensions are referred to as \emph{permissible sizes}. Moreover, solutions corresponding to different permissible sizes are related as follows:\vspace{-0.7em}
\begin{itemize}
\item[(i)] Let $X^{p_1 \times q_1 \times r}$ and $X^{p_2 \times q_2 \times r}$ be two solutions of permissible sizes satisfying
\[
\frac{q_2}{q_1}=\frac{p_2}{p_1}\in \mathbb{Z}, \qquad \frac{q_2}{q_1}>1.
\]
Then the larger solution can be constructed from the smaller one via
\[
X^{p_2 \times q_2 \times r}
=
X^{p_1 \times q_1 \times r} \otimes I_{\frac{q_2}{q_1}\times \frac{q_2}{q_1}}.
\]
Conversely, if Eq\eqref{eq:5.1} possesses a unique solution of permissible size
$p_2 \times q_2 \times r$, then any solution of size $p_1 \times q_1 \times r$, if it exists, must also be unique.
\vspace{-0.7em}
\item[(ii)] Define $\widetilde{\alpha}=\gcd\{n,k\}$ and $\widetilde{\beta}=\gcd\{l,\tfrac{ad}{b}\}$.
Let
\[
\widetilde{p}=\frac{n}{\widetilde{\alpha}}=\frac{l}{\widetilde{\beta}}, \qquad
\widetilde{q}=\frac{k}{\widetilde{\alpha}}=\frac{ad}{b\widetilde{\beta}}.
\]
If Eq~\eqref{eq:5.1} admits a solution of minimal dimension
$\widetilde{p} \times \widetilde{q} \times r$, then it also admits solutions of all permissible sizes for which $\beta \neq 1$.
\end{itemize}
\end{remark}\vspace{-0.7em}
Next, we provide example illustrating the above Lemma \ref{lemma:5.2}.\vspace{-0.6em}
\begin{example}
    Let $\mathcal{A,\ B,\ C}$ and $\mathcal{D}$ are tensors defined as
     \begin{center}
\begin{tabular}{c c c c|c c c c}
\hline
  \multicolumn{4}{c}{$\mathcal{A}(:,:,1)$} & 
  \multicolumn{4}{c}{$\mathcal{A}(:,:,2)$} \\
   \hline 
   1&3&0&2&5&0&6&1\\
   4&1&3&0&2&5&0&6\\
 \hline
\end{tabular},\quad
\begin{tabular}{c|c}
\hline
  \multicolumn{1}{c}{$\mathcal{C}(:,:,1)$} & 
  \multicolumn{1}{c}{$\mathcal{C}(:,:,2)$} \\
   \hline 
   1&3\\
   2&1\\
   3&0\\
   0&1\\
   1&2\\
   0&1\\
   \hline
\end{tabular},\quad
\quad
\begin{tabular}{c c|c c}
\hline
\multicolumn{2}{c}{$\mathcal{D}(:,:,1)$} & 
\multicolumn{2}{c}{$\mathcal{D}(:,:,2)$} \\
   \hline 
 11&15&8&12\\
 12&11&7&8\\
 9&12&12&7\\
 10&10&19&11\\
 7&10&8&19\\
 18&7&15&8\\
   \hline
\end{tabular},
\end{center}
\begin{center}
 and  \begin{tabular}{c c c c c c c c|c c c c c c c c}
\hline
\multicolumn{8}{c}{$\mathcal{B}(:,:,1)$} & 
\multicolumn{6}{c}{$\mathcal{B}(:,:,2)$} \\
   \hline 
 22&14&25&15&19&14&22&8&38&16&29&12&29&10&20&13\\
 25&22&22&25&19&19&16&22&17&38&20&13&14&29&17&20\\
   \hline
\end{tabular}.\quad
\end{center}
By Lemma~\ref{lemma:5.2}, it is straightforward to verify that the given tensors are compatible. The solution is $\mathcal{X}$
\begin{tabular}{c c c c|c c c c}
\hline
\multicolumn{4}{c}{$\mathcal{X}(:,:,1)$} & 
\multicolumn{4}{c}{$\mathcal{X}(:,:,2)$} \\
   \hline 
 1&3&2&1&3&2&1&3\\
 5&2&3&2&1&2&2&1\\
   \hline
\end{tabular}.
\end{example}
\subsection{The general case corresponds to \texorpdfstring{$m\neq h$}{m=h} with \texorpdfstring{$l=p$}{l=p}}
This section, we examine the existence of solution of the tensor tensor Eq\eqref{eq:5.1}, under the assumption that the tensors $\mathcal{A}$, $\mathcal{B}$, $\mathcal{C}$, and $\mathcal{D}$ are fixed. In parallel with Lemma~\ref{lemma:5.2}, we establish necessary conditions for the existence of solution of Eq\eqref{eq:5.1} and characterize the admissible dimensions of its solutions.
\begin{lemma}\label{lemma:5.3}
Let $X \in \mathbb{C}^{p \times q\times r}$ be a solution of the tensor-tensor Eq~\eqref{eq:5.1}.  
Then the dimensions of the tensors $\mathcal{A}$, $\mathcal{B}$, $\mathcal{C}$, and $\mathcal{D}$ necessarily hold the following conditions:\vspace{-0.5em}
\begin{enumerate}
\item[(i)] The ratios $\frac{h}{m}$ and $\frac{d}{b}$ are positive integers; (ii) The dimensions satisfy
$p = l = \frac{n h}{\alpha m}, \qquad q = \frac{ad}{b} = \frac{k}{\alpha},$
where $\alpha$ is a common divisor of $n$ and $k$. Furthermore,
$(\alpha, \frac{h}{m}) = 1,$
and the solution $X$ is unique.
\end{enumerate}
\end{lemma}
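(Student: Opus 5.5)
The plan is to argue exactly as in Lemma~\ref{lemma:4.8} and Lemma~\ref{lemma4.5}, now with $\mathcal{X}\in\mathbb{C}^{p\times q\times r}$. Since $\mathcal{X}$ has $r$ frontal slices, the depth parameters are $d_1=d_2=[r,r]=r$. Both STPs therefore only use Kronecker padding in the first two modes, and the t-product in the third mode is dimensionally harmless. Consequently all constraints come from matching the first two modes. Writing out the sizes in Eq\eqref{eq:5.1} gives
$$\frac{mt_1}{n}=h,\quad \frac{qt_1}{p}=k,\qquad \frac{pt_2}{q}=l,\quad \frac{bt_2}{a}=d,$$
with $t_1=[n,p]$ and $t_2=[q,a]$.

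\textbf{Step 1 (the $\mathcal{A}$-equation).} From the first relation, $\frac{t_1}{n}=\frac{h}{m}$, and this is a positive integer because $n\mid t_1$. Set $\alpha:=\frac{t_1}{p}$, which is also an integer. Then $k=q\alpha$, so $q=\frac{k}{\alpha}$, and $t_1=\frac{nh}{m}=p\alpha$ gives $p=\frac{nh}{\alpha m}$. Since $t_1$ is the least common multiple of $n$ and $p$, the cofactors $\frac{t_1}{n}=\frac{h}{m}$ and $\frac{t_1}{p}=\alpha$ must be coprime, which yields $(\alpha,\frac{h}{m})=1$. Because $\alpha\mid k$, it remains to see that $\alpha\mid n$. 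Here I would use $\alpha\mid \frac{nh}{m}$ together with $(\alpha,\frac{h}{m})=1$.

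\textbf{Step 2 (the $\mathcal{C}$-equation with $l=p$).} From $\frac{pt_2}{q}=l=p$ we get $t_2=q$, so $a\mid q$ and $[q,a]=q$. Then $\frac{bq}{a}=d$, i.e.\ $q=\frac{ad}{b}$, and $\frac{d}{b}=\frac{q}{a}=\frac{t_2}{a}$ is a positive integer. Combining this with Step 1 gives $p=l=\frac{nh}{\alpha m}$ and $q=\frac{ad}{b}=\frac{k}{\alpha}$, which are the stated dimension relations.

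\textbf{Step 3 (uniqueness).} This is the step I expect to be the main obstacle, since it is not purely dimensional. As in the discussion preceding Theorem~\ref{theorem:4.7}, I would split the system by rows: $\mathcal{X}\ltimes\mathcal{C}=\mathcal{D}$ becomes $\mathcal{Y}_g\ltimes\mathcal{C}=\mathcal{D}_g$ for $g=1,\ldots,p$, where $\mathcal{Y}_g$ is the $g$-th horizontal slice of $\mathcal{X}$. Because $l=p$ and $t_2=q$, no Kronecker padding of $\mathcal{X}$ occurs, so $\mathcal{X}\ltimes\mathcal{C}=\mathcal{X}*(\mathcal{C}\otimes I_{\frac{d}{b}\times\frac{d}{b}\times 1})$. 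This is a linear map in $\mathcal{X}$.

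Uniqueness must then be read in the same sense as in Lemma~\ref{lemma4.5}. Namely, the permissible size is forced, since $p=l$ and $q=\frac{ad}{b}$ are fixed, so no family $\mathcal{X}\otimes I$ of other permissible sizes as in Remark~\ref{remark:5.1} exists. I would state uniqueness modulo the kernel of the linear map. Equivalently, for fixed admissible $\alpha$ the solution is unique whenever $\mathcal{C}\otimes I$ (or the block $\widehat{\mathcal{A}}$-system) has trivial null space.

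This reading reflects a weakness in the statement itself: uniqueness of the solution $\mathcal{X}$ does not follow from dimension matching. A plain dimension count shows only that the solution size is unique, not that the solution tensor is.
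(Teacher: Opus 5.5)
Your Steps 1--2 carry out exactly the dimension-matching argument the paper intends (it omits the proof and points to Lemma~\ref{lemma:4.3}), and they correctly give every size relation, with $\alpha\mid n$ and $(\alpha,\frac{h}{m})=1$ following from the coprimality of the cofactors $\frac{t_1}{n},\frac{t_1}{p}$ of $t_1=[n,p]$. Your doubt about the uniqueness clause is justified: the paper gives no argument for it, and it fails for the literal tensor (take $\mathcal{A},\mathcal{B},\mathcal{C},\mathcal{D}$ all zero), so it can only mean that the size $l\times\frac{ad}{b}\times r$ is forced — though injectivity of one of the two linear maps is merely sufficient for uniqueness of $\mathcal{X}$, not equivalent to it; the exact condition is that their kernels intersect trivially.
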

\begin{proof}
The proof follows the analogous arguments as those in Lemma \ref{lemma:4.3} and is therefore omitted.
\end{proof}
\begin{example}
Let $\mathcal{A,\ B,\ C}$ and $\mathcal{D}$ be tensors defined as
     \begin{center}
\begin{tabular}{c c c|c c c}
\hline
  \multicolumn{3}{c}{$\mathcal{A}(:,:,1)$} & 
  \multicolumn{3}{c}{$\mathcal{A}(:,:,2)$} \\
   \hline 
  1&0&-1&2&1&0\\
  2&1&0&0&2&1\\
 \hline
\end{tabular},\quad
   \begin{tabular}{c c c c c c|c c c c c c}
\hline
\multicolumn{6}{c}{$\mathcal{B}(:,:,1)$} & 
\multicolumn{6}{c}{$\mathcal{B}(:,:,2)$} \\
   \hline 
5&0&2&-2&-6&-1&4&-1&1&-1&1&0\\
1&5&0&-1&-2&-6&0&4&-1&6&-1&1\\
2&1&5&0&-1&-2&4&0&4&-2&6&-1\\
2&2&1&4&0&-1&1&4&0&11&-2&6\\
   \hline
\end{tabular},\quad
\end{center}
\begin{center}
\begin{tabular}{c c c|c c c}
\hline
  \multicolumn{3}{c}{$\mathcal{C}(:,:,1)$} & 
  \multicolumn{3}{c}{$\mathcal{C}(:,:,2)$} \\
   \hline 
 3&2&-2&4&0&1\\
 1&3&2&0&4&0\\
   \hline
\end{tabular}\quad
and\quad
\begin{tabular}{c c c|c c c}
\hline
\multicolumn{3}{c}{$\mathcal{D}(:,:,1)$} & 
\multicolumn{3}{c}{$\mathcal{D}(:,:,2)$} \\
   \hline 
  11&-2&0&9&1&-5\\
  10&14&13&2&23&-4\\
   \hline
\end{tabular}. 
\end{center}
By Lemma \ref{lemma:5.3}, it is straightforward to verify that the given tensors are compatible. The solution is $\mathcal{X}$
\begin{tabular}{c c|c c}
\hline
\multicolumn{2}{c}{$\mathcal{X}(:,:,1)$} & 
\multicolumn{2}{c}{$\mathcal{X}(:,:,2)$} \\
   \hline 
 1&0&2&-1\\
 0&6&1&-1\\
   \hline
\end{tabular}.
\end{example}
\subsection{The general case corresponds to \texorpdfstring{$m\neq h$}{m=h} with \texorpdfstring{$l\neq p $}{l=p}}
This section, we examine the existence of solution of the tensor-tensor Eq\eqref{eq:5.1} for the case $m \neq h$ and $p \neq l$, with the tensors $\mathcal{A}$, $\mathcal{B}$, $\mathcal{C}$, and $\mathcal{D}$ know. A necessary condition for the existence of a solution can be obtained directly from the definition of the STP.
\begin{lemma}\label{lemma:5.5}
Assume that the tensor Eq\eqref{eq:5.1} admits a solution $X \in \mathbb{C}^{p\times q\times r}$. Then the dimensions of the tensors $\mathcal{A}$, $\mathcal{B}$, $\mathcal{C}$, and $\mathcal{D}$ necessarily hold the following conditions:\vspace{-0.5em}
\begin{enumerate}
\item[(i)] The ratios $\frac{h}{m}$ and $\frac{d}{b}$ are positive integers; (ii) The dimensions of $X$ are given by
$p=\frac{l}{\beta}=\frac{nh}{m\alpha},\
q=\frac{k}{\alpha}=\frac{ab}{b\beta},$
where $\alpha$ and $\beta \neq 1$ denote the common divisors of $k$ with $n$ and $l$ with $a$, respectively. Furthermore,
$
(\alpha,\frac{h}{m})=1 
\ \text{and} \
(\beta,\frac{d}{b})=1 .
$
\end{enumerate}
\end{lemma}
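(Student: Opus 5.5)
The proof of Lemma~\ref{lemma:5.5} follows the dimension-matching argument of Lemma~\ref{lemma:4.8}, now with a tensor unknown $\mathcal{X}\in\mathbb{C}^{p\times q\times r}$. The only new ingredient is the third mode. Since $\mathcal{A}$, $\mathcal{X}$ and $\mathcal{C}$ all have $r$ frontal slices, we get $d_1=d_2=[r,r]=r$. Hence neither STP inflates the third dimension, and the identity tensors used in the Kronecker padding have third dimension $d/r=1$. So the third-mode sizes of both sides agree automatically with those of $\mathcal{B}$ and $\mathcal{D}$, and all constraints come from the first two modes, exactly as in the matrix case.

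\textbf{Step 1: read off the size equations.} Compare the sizes of $\mathcal{A}\ltimes\mathcal{X}$ and $\mathcal{X}\ltimes\mathcal{C}$ with those of $\mathcal{B}\in\mathbb{C}^{h\times k\times r}$ and $\mathcal{D}\in\mathbb{C}^{l\times d\times r}$, using the STP definition. With $t_1=[n,p]$ and $t_2=[q,a]$ this gives
\[
\frac{mt_1}{n}=h,\quad \frac{qt_1}{p}=k,\quad \frac{pt_2}{q}=l,\quad \frac{bt_2}{a}=d.
\]

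\textbf{Step 2: the integrality statements and the formulas for $p,q$.} The first equation gives $t_1/n=h/m$. This is a positive integer because $n\mid t_1$, and then $t_1=nh/m$. The fourth equation gives $t_2/a=d/b$, a positive integer because $a\mid t_2$, so $t_2=ad/b$. Set $\alpha:=t_1/p$ and $\beta:=t_2/q$, both positive integers since $p\mid t_1$ and $q\mid t_2$. The second and third equations then read $k=\alpha q$ and $l=\beta p$. This yields
\[
p=\frac{t_1}{\alpha}=\frac{nh}{m\alpha}=\frac{l}{\beta},\qquad q=\frac{k}{\alpha}=\frac{t_2}{\beta}=\frac{ad}{b\beta}.
\]
It remains to show that $\alpha$ and $\beta$ are common divisors of the stated pairs. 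Clearly $\alpha\mid k$ and $\beta\mid l$. Also $\alpha\mid nh/m$ and $\beta\mid ad/b$; the divisibility $\alpha\mid n$ (resp.\ $\beta\mid a$) follows only once the coprimality of Step 3 is established. The condition $\beta\neq1$ is forced by the standing assumption $l\neq p$, since $\beta=1$ gives $l=p$.

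\textbf{Step 3: the coprimality conditions, which I expect to be the main obstacle.} These come from the lcm structure. From $t_1=[n,p]$ we have
\[
\frac{t_1}{n}=\frac{p}{(n,p)}=\frac{h}{m},\qquad \frac{t_1}{p}=\frac{n}{(n,p)}=\alpha .
\]
Since $\frac{n}{(n,p)}$ and $\frac{p}{(n,p)}$ are coprime, $(\alpha,\frac{h}{m})=1$. Then $\alpha\mid n\cdot\frac{h}{m}$ together with this coprimality gives $\alpha\mid n$, so $\alpha$ is a common divisor of $n$ and $k$. The identical argument with $t_2=[q,a]$ gives
\[
\frac{t_2}{a}=\frac{q}{(q,a)}=\frac{d}{b},\qquad \frac{t_2}{q}=\frac{a}{(q,a)}=\beta .
\]
Hence $(\beta,\frac{d}{b})=1$ and $\beta\mid a$.

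The delicate point is the paper's notation. Lemma~\ref{lemma:4.8} writes these conditions with brackets $[\cdot,\cdot]$, whereas the gcd form $(\cdot,\cdot)$ is what is actually true and what the statement asserts. I would also note, as a remark, that the statement's "$\frac{ab}{b\beta}$" should read $\frac{ad}{b\beta}$.
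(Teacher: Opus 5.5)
Your proof is correct and takes essentially the paper's route, namely the dimension-matching argument of Lemma~\ref{lemma:4.8}, to which the paper defers: you read off $t_1/n=h/m$, $t_1/p=\alpha$, $t_2/a=d/b$, $t_2/q=\beta$ from the STP sizes and get the coprimality from the lcm structure of $t_1=[n,p]$ and $t_2=[q,a]$. Your identities $t_1/n=p/(n,p)$ and $t_1/p=n/(n,p)$ (and their analogues for $t_2$) supply the justification the paper only asserts, and you are right that the paper's $[\alpha,\frac{h}{m}]=1$ and $[\beta,\frac{d}{b}]=1$ must be read as gcd conditions and that $\frac{ab}{b\beta}$ should be $\frac{ad}{b\beta}$.
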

\begin{proof}
The proof follows the analogous arguments as those in Lemma~\ref{lemma:4.8} and is therefore omitted.
\end{proof}
\begin{remark}
If a solution has dimensions $p \times q \times r$ that satisfy the conditions outlined in Lemma~\ref{lemma:5.5}, these dimensions are referred to as a compatible size. Following the reasoning in Remark~\ref{remark:5.1}, assume that
$\frac{q_2}{p_1} = \frac{p_2}{q_1} > 1,$
and let $X \in \mathbb{C}^{p \times q}$ be a solution to the tensor-tensor Eq\eqref{eq:5.1}. Then, the tensor
$X_1 = X \otimes I_{\frac{q_2}{p_1}\times \frac{q_2}{p_1}}$
also satisfies Eq\eqref{eq:5.1}, with dimensions $X_1 \in \mathbb{C}^{p_2 \times q_2 \times r}$. Moreover, if the solution $X_1$ of compatible size $p_2 \times q_2 \times r$ is unique, it follows that the solution $X$ of compatible size $p_1 \times q_1 \times r$ is also unique.
\end{remark}\vspace{-0.5em}
Analogous to Theorem \ref{theomer:4.8}, we characterize the explicit structure of $\mathcal{B, \ D}$ using the properties of the STP and the dimensional compatibility condition.\vspace{-0.5em}
\begin{theorem}
If the tensor-tensor Eq\eqref{eq:5.1} admits a solution $X \in \mathbb{C}^{p \times q\times r}$, then tensors $\mathcal{B}$ and $\mathcal{D}$ must be block-partitioned, with each block being a Toeplitz tensor. Specifically, $\mathcal{B}$ and $\mathcal{D}$ can be represented in the following block forms:
\begin{equation}
\mathcal{B} =
\begin{bmatrix}
\text{Block}_{11}(\mathcal{B}) & \cdots & \text{Block}_{1q}(\mathcal{B}) \\
\vdots & \ddots & \vdots \\
\text{Block}_{m1}(\mathcal{B}) & \cdots & \text{Block}_{mq}(\mathcal{B})
\end{bmatrix},
\quad
\mathcal{D} =
\begin{bmatrix}
\text{Block}_{11}(\mathcal{D}) & \cdots & \text{Block}_{1b}(\mathcal{D}) \\
\vdots & \ddots & \vdots \\
\text{Block}_{p1}(\mathcal{D}) & \cdots & \text{Block}_{pb}(\mathcal{D})
\end{bmatrix}.
\end{equation}
where $\operatorname{Block}_{sj}(\mathcal{B}) \in \mathbb{C}^{n \times k\times r}$ and
$\operatorname{Block}_{gt}(\mathcal{D}) \in \mathbb{C}^{a \times b\times r}$ are Toeplitz tensors, with
$j = 1,\ldots,q$, $t = 1,\ldots,b,\ g=1,\ldots,p$ and $s=1,\ldots,m$.
\end{theorem}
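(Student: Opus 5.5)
The plan is to mirror the proof of Theorem~\ref{theomer:4.8}, replacing the unknown matrix by the unknown tensor $\mathcal{X}\in\mathbb{C}^{p\times q\times r}$. Since $d_1=d_2=[r,r]=r$, the identity factors in the third mode of the STP are trivial, so every product stays inside the $t$-product over $r$ frontal slices. By Lemma~\ref{lemma:5.5}, the admissible dimensions are $p=\frac{l}{\beta}=\frac{nh}{m\alpha}$ and $q=\frac{k}{\alpha}=\frac{ad}{b\beta}$, together with the coprimality conditions $(\alpha,\frac{h}{m})=1$ and $(\beta,\frac{d}{b})=1$. These are the two facts the Toeplitz argument will use.

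\textbf{Step 1 (the equation for $\mathcal{B}$).} Partition $\mathcal{X}$ into its lateral slices, $\mathcal{X}=[\mathcal{X}_1\ \cdots\ \mathcal{X}_q]$, where each $\mathcal{X}_j\in\mathbb{C}^{p\times 1\times r}$. The Kronecker product with the identity tensor respects column partitioning, and the $t$-product distributes over lateral blocks. Hence
\[
\mathcal{A}\ltimes\mathcal{X}=[\mathcal{A}\ltimes\mathcal{X}_1\ \cdots\ \mathcal{A}\ltimes\mathcal{X}_q]=[\mathcal{B}_1\ \cdots\ \mathcal{B}_q].
\]
Each equation $\mathcal{A}\ltimes\mathcal{X}_j=\mathcal{B}_j$ is now a tensor--vector equation of the type treated in Section 3.2, with slice-wise coefficients. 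I will apply the argument of Theorem~\ref{theorem:3.5} frontal slice by frontal slice through $\mathrm{bcirc}$. The key observation is that for fixed $s$, $\operatorname{Row}_s(\mathcal{A}_{(::i)})\otimes I_{h/m}$ multiplied against $\mathcal{X}_j\otimes I_{\alpha}$ gives a linear combination of shifted identity-diagonal patterns. Because $(\alpha,\frac{h}{m})=1$, the shifts wrap consistently, so each $\frac{h}{m}$-row block has entries depending only on the difference of the row and column indices. This is exactly the Toeplitz property of Definition~\ref{def}. The resulting blocks $\operatorname{Block}_{sj}(\mathcal{B})$ are therefore Toeplitz tensors.

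\textbf{Step 2 (the equation for $\mathcal{D}$).} Dually, partition $\mathcal{X}$ into its horizontal slices $\mathcal{Y}_g$, $g=1,\dots,p$. Then $\mathcal{X}\ltimes\mathcal{C}$ splits into the row blocks $\mathcal{Y}_g\ltimes\mathcal{C}=(\mathcal{Y}_g\otimes\mathcal{I}_{\beta\times\beta\times r})*(\mathcal{C}\otimes\mathcal{I}_{\frac{d}{b}\times\frac{d}{b}\times r})=\mathcal{D}_g$. Splitting $\mathcal{C}$ further by columns $\operatorname{Col}_t(\mathcal{C})$, $t=1,\dots,b$, each piece $\operatorname{Block}_{gt}(\mathcal{D})$ is the sum $\sum_{u} y_{gu}\,\mathcal{C}^{u}$. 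This is the same computation already displayed for the matrix case in Section~4.2, where the patterns $\mathcal{C}^u_{(::i)}$ appear. Since $(\beta,\frac{d}{b})=1$, each such block is Toeplitz in its first two indices for every frontal slice. Assembling the blocks over all $g$ and $t$ gives the stated partition of $\mathcal{D}$.

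\textbf{Main obstacle.} The delicate part is Step 1 and Step 2's claim that the shift pattern produced by the Kronecker identity is genuinely Toeplitz, i.e.\ that the diagonal offsets coming from the coprime decomposition $k=l_1\frac{h}{m}+l_2$ (respectively $\widetilde\beta=l_3\frac{d}{b}+l_4$) match across successive columns. My first attempt would be an index computation. Entry $(u,v)$ of $(\operatorname{Row}\otimes I_{h/m})(\mathcal{X}_j\otimes I_\alpha)$ equals $\sum_w a_{s,w}x_{\cdot}$, summed over those $w$ with $\lceil\cdot\rceil$ conditions depending only on $u-v$ modulo the lcm. I would verify this for a single frontal slice and then lift it to the $t$-product, using the fact that $\mathrm{bcirc}$ acts slice-linearly.

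The dimension bookkeeping is also delicate, since the statement's block sizes should really be $\frac{h}{m}\times\alpha$ and $\beta\times\frac{d}{b}$. I would state and use those sizes explicitly.
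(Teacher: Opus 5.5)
Your proposal is correct and follows the paper's route. The paper gives no separate proof of this theorem and defers to the argument of Theorem~\ref{theomer:4.8}: split $\mathcal{X}$ into lateral slices to reduce $\mathcal{A}\ltimes\mathcal{X}=\mathcal{B}$ to the Toeplitz computation of Theorem~\ref{theorem:3.5}, split it into horizontal slices for $\mathcal{X}\ltimes\mathcal{C}=\mathcal{D}$, and lift the slice-wise statement by linearity of $\mathrm{bcirc}$, exactly as you do. Your correction of the block sizes to $\frac{h}{m}\times\alpha\times r$ and $\beta\times\frac{d}{b}\times r$ is right. For the index check, Toeplitzness comes simply from the shift $c\mapsto c+1$ of the summation index, which preserves both block indices $\lceil c/\tfrac{h}{m}\rceil$ and $\lceil c/\alpha\rceil$ whenever the in-block row and column positions can both be incremented, so no wrap-around or coprimality argument is needed.
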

Next, we provide example illustrating the above Lemma \ref{lemma:5.5}
\begin{example}
Let $\mathcal{A,\ B,\ C}$ and $\mathcal{D}$ be tensors defined as
     \begin{center}
\begin{tabular}{c|c}
\hline
  \multicolumn{1}{c}{$\mathcal{A}(:,:,1)$} & 
  \multicolumn{1}{c}{$\mathcal{A}(:,:,2)$} \\
   \hline 
 3&-1\\
 4&2\\
 \hline
\end{tabular},\quad
   \begin{tabular}{c c|c c}
\hline
\multicolumn{2}{c}{$\mathcal{B}(:,:,1)$} & 
\multicolumn{2}{c}{$\mathcal{B}(:,:,2)$} \\
   \hline 
5&9&1&-3\\
-7&5&15&1\\
10&12&8&6\\
4&10&14&8\\
   \hline
\end{tabular},\quad
\begin{tabular}{c c|c c}
\hline
  \multicolumn{2}{c}{$\mathcal{C}(:,:,1)$} & 
  \multicolumn{2}{c}{$\mathcal{C}(:,:,2)$} \\
   \hline 
 2&-2&1&4\\
 1&2&0&1\\
 3&1&-1&0\\
 0&3&2&-1\\
 \hline
\end{tabular}\quad
and\quad
\begin{tabular}{c c|c c}
\hline
\multicolumn{2}{c}{$\mathcal{D}(:,:,1)$} & 
\multicolumn{2}{c}{$\mathcal{D}(:,:,2)$} \\
   \hline 
  14&3&1&6\\
  2&14&7&1\\
  7&20&8&-11\\
  1&7&8&8\\
   \hline
\end{tabular}. 
\end{center}
By Lemma \ref{lemma:5.5}, it is straightforward to verify that the given tensors are compatible. The solution is $\mathcal{X}$
\begin{tabular}{c c|c c}
\hline
\multicolumn{2}{c}{$\mathcal{X}(:,:,1)$} & 
\multicolumn{2}{c}{$\mathcal{X}(:,:,2)$} \\
   \hline 
 2&3&1&0\\
 -1&2&4&1\\
   \hline
\end{tabular}.
\end{example}
\section{Conclusion}
In this paper, we studied the couple tensor equation $\mathcal{A}\ltimes \mathcal{X}=\mathcal{B},\ \mathcal{X}\ltimes\mathcal{C}=\mathcal{D}$
under the framework of the STP combined with the $t$-product. The necessary and sufficient conditions are derived for the existence of solutions, and an equivalence criterion for solvability is established. These results were obtained for the cases where the unknown $\mathcal{X}$ is a vector, a matrix, or a higher-order tensor. In addition, the explicit structural properties of the tensor $\mathcal{B}$ and $\mathcal{D}$, including Toeplitz and Circulant structures, were analyzed. 

The framework developed in this paper can be naturally extended to the STP combined with the $c$-product. In particular, the compatibility conditions derived for the $t$-product remain necessary for the corresponding tensor equation under the $c$-product. However, the explicit structure of the solutions may differ from that obtained in the $t$-product setting. Further investigation is required to analyze and characterize the structure of $\mathcal{B}$ and $\mathcal{D}$ under the semi-tensor product with the $c$-product.

A tensor-based framework for 3D brain tumor segmentation using the coupled system $\mathcal{A}\ltimes \mathcal{X}=\mathcal{B},\; \mathcal{X}\ltimes \mathcal{C}=\mathcal{D},$
where the latent tensor $\mathcal{X}\in \mathbb{R}^{H\times W\times S}$ preserves full volumetric structure. 
The operators $\mathcal{A}$ and $\mathcal{C}$ enforce spatial regularization and task-specific transformation, respectively, while $\mathcal{B}$ and $\mathcal{D}$ represent intermediate and final segmentation outputs.
This coupled formulation constrains $\mathcal{X}$ from both sides, improving stability and identifiability compared to single-equation approaches. 
The model effectively captures spatio-anatomical features and supports accurate tumor segmentation with enhanced boundary quality and reduced false positives, making it suitable for volumetric MRI analysis and hybrid learning frameworks.
 \begin{figure}[H]
\centering
\includegraphics[width=10cm, height=5cm]{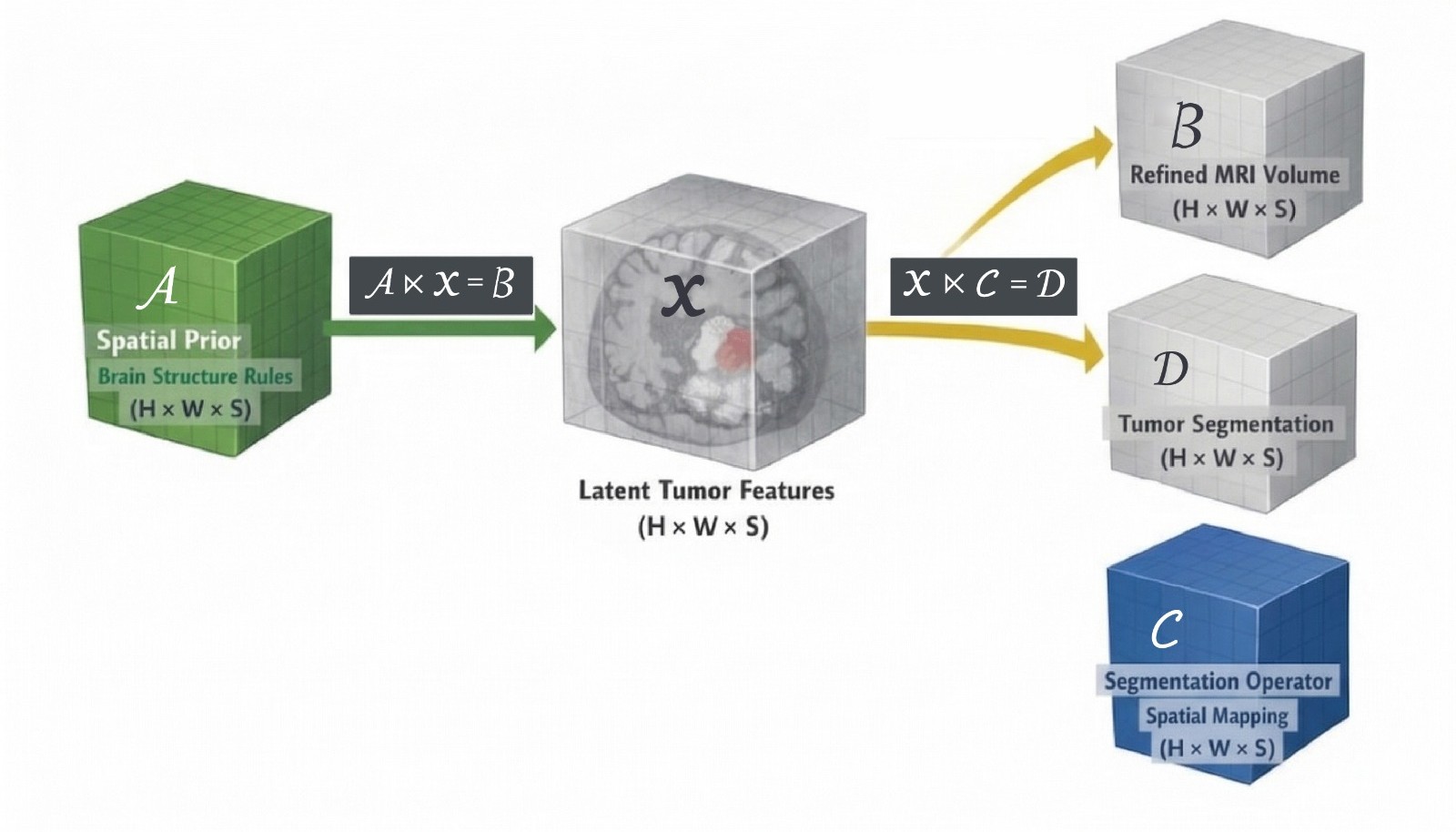}
\caption{3D brain tumor segmentation framework}
\label{fig:example}
\end{figure}
\section*{Conflicts of interest}
The authors declare that they have no conflict of interest
\section*{Data Availability Statements}
 Data sharing is not applicable to this article as no new data is analyzed in this study.

\end{document}